\newtheorem{theorem}{Theorem}[section]
\newtheorem{lemma}[theorem]{Lemma}
\newtheorem{corollary}[theorem]{Corollary}
\newtheorem{remark}[theorem]{Remark}
\begin{document}
\numberwithin{equation}{section}
\numberwithin{table}{section}
\numberwithin{figure}{section}
\allowdisplaybreaks[4]
\def\O{{\Omega}}
\def\R{\mathbb{R}}
\def\p{\partial}
\def\d{\displaystyle}
\def\PO{\Pi_{k,h}^{\raise 1pt\hbox{$\scriptscriptstyle\nabla$}}}
\def\POO{\Pi_{h,1}^{\raise 1pt\hbox{$\scriptscriptstyle\nabla$}}}
\def\POD{\Pi_{k,D}^{\raise 1pt\hbox{$\scriptscriptstyle\nabla$}}\hspace{1pt}}
\def\POOD{\Pi_{1,D}^{\raise 1pt\hbox{$\scriptscriptstyle\nabla$}}\hspace{1pt}}
\def\PDZ{\Pi_{k,D}^0\hspace{1pt}}
\def\PZ{\Pi_{k,h}^0}
\def\PZmt{\Pi_{k-2,h}^0}
\def\Qh{Q_h}
\def\cT{\mathcal{T}}
\def\cQ{\mathcal{Q}}
\def\cQD{\cQ^k(D)}
\def\cQbD{\cQ^k(\p D)}
\def\cQh{\cQ^k_h}
\def\cP{\mathcal{P}}
\def\cPh{\cP^k_h}
\def\bbP{\mathbb{P}}
\def\PkD{\mathbb{P}_k(D)}
\def\Pk{\mathbb{P}_k}
\def\PkbD{\mathbb{P}_k(\p D)}
\def\PkmtD{\mathbb{P}_{k-2}(D)}
\def\PDZmt{\Pi_{k-2,D}^0}
\def\ID{I_{k,D}}
\def\tbar{|\!|\!|}
\def\hD{\hspace{1pt}h_{\ssD}}
\def\hE{h_{E}}
\def\Ih{I_{k,h}}
\def\fB{\mathfrak{B}}
\def\cE{\mathcal{E}}
\def\KerD{\mathcal{N}(\POD)}
\def\ssD{{\scriptscriptstyle D}}
\def\cD{c_{\ssD}}
\def\rD{\rho_{\!{_\ssD}}}
\def\ED{\cE_{\ssD}}
\def\tD{\tau_{\!\ssD}}
\def\hS{h_{\sigma}}
\def\hE{h_e}
\def\hF{h_F}
\def\fBD{\fB_{\!\ssD}}
\def\tBD{\tilde\fB_{\!\ssD}}
\def\ET{\mathbb{E}_{\ssD}}
\def\TL{\mathrm {Tr}^\dag}
\def\SumD{\sum_{D\in\cT_h}}
\def\NPD{\mathcal{N}_{\p D}}
\def\unorm{|u|_{H^{\ell+1}(\O)}}
\def\LTD{{L_2(D)}}
\def\HOD{{H^1(D)}}
\def\HTD{{H^2(D)}}
\def\LinfD{{L_\infty(D)}}
\def\POF{\Pi_{k,F}^{\raise 1pt\hbox{$\scriptscriptstyle\nabla$}}\hspace{1pt}}
\def\LTF{{L_2(F)}}
\def\ssF{\scriptscriptstyle{F}}
\def\EF{\cE_{\ssF}}
\def\FD{\mathcal{F}_\ssD}
\def\SumF{\sum_{F\in\FD}}
\def\NPF{\mathcal{N}_{\p F}}
\def\PFZ{\Pi_{k,F}^0}
\def\PFZmo{\Pi_{k-1,F}^0}
\def\PFZmt{\Pi_{k-2,F}^0}
\def\tF{\tau_{\!\ssF}}
\def\cF{\mathcal{F}}
\def\factor{\ln(1+\tD)}
\def\IF{I_{k,F}}
\def\cN{\mathcal{N}}
\def\HOF{{H^1(F)}}
\def\factorTD{\ln(1+\max_{F\in\cF_D}\tF)}
\def\GF{\nabla_{\!\!F}}
\title[Virtual Element Methods on Meshes with Small Edges or Faces]
{Virtual Element Methods on Meshes\\ with Small Edges or Faces}
\author[S.C. Brenner]{Susanne C. Brenner}
\address{Susanne C. Brenner, Department of Mathematics and Center for
Computation and Technology, Louisiana State University,
Baton Rouge, LA 70803} \email{brenner@math.lsu.edu}
\author[L.-Y. Sung]{Li-yeng Sung} \address{Li-yeng Sung,
 Department of Mathematics and Center for Computation and Technology,
 Louisiana State University, Baton Rouge, LA 70803}
\email{sung@math.lsu.edu}
\thanks{This work  was supported in part
 by the National Science Foundation under Grant No. DMS-16-20273.}
\begin{abstract} We consider a model Poisson problem in $\R^d$ ($d=2,3$)
 and establish error estimates
 for virtual element methods on polygonal or polyhedral meshes that
 can contain
 small edges ($d=2$) or small faces ($d=3$).
\end{abstract}
\subjclass{65N30}
\keywords{virtual elements, polyhedral meshes, Poisson problem}
%\date{September 14, 2017}
\maketitle
%
%%%%%%%%%%%%%%%%%%%%%%%%%%%%%%%%%%%%%%%%%%%%%%%%%%%%%%%%
\section{Introduction}\label{sec:Introduction}
 Let $\O\subset \R^d$ ($d=2,3$) be a  bounded polygonal/polyhedral domain
 and $f\in L_2(\O)$. The Poisson problem
 with the homogeneous Dirichlet boundary condition is to find
 $u\in H^1_0(\O)$ such that
\begin{equation}\label{eq:Poisson}
 a(u,v)=(f,v) \qquad\forall\,v\in H^1_0(\O),
\end{equation}
 where
\begin{equation}\label{eq:aDef}
  a(u,v)=\int_\O \nabla u\cdot\nabla v\,dx
\end{equation}
 and $(\cdot,\cdot)$ is the  inner product for $L_2(\O)$.
 Here and throughout the paper we follow standard notation for differential
 operators, function
 spaces and norms that can be found for example in
 \cite{Ciarlet:1978:FEM,ADAMS:2003:Sobolev,BScott:2008:FEM}.
\par
 Problem \eqref{eq:Poisson} can be solved by virtual element methods
 \cite{BBCMMR:2013:VEM,AABMR:2013:Projector} on
 polygonal or polyhedral meshes.
 It has been observed in numerical experiments that the convergence rates
 for the virtual element methods do not deteriorate noticeably even
 in the presence of small edges or faces
 (cf. \cite{AABMR:2013:Projector,BLR:2016:VEM,BDR:2017:VEM}).
 Our goal is to establish error estimates that justify these numerical results
 for the virtual element methods introduced in \cite{AABMR:2013:Projector}.
\par
 We will develop error estimates that are based on general shape regularity
 assumptions on the subdomains in
 the polygonal or polyhedral meshes.  For the two dimensional problem, we assume
 that (i) each polygonal subdomain  is
 star-shaped with respect to a disc whose diameter is comparable to the
 diameter of the subdomain and (ii)
 the number of edges of the subdomains is uniformly bounded.  For the three
 dimensional problem, we assume
 that (i) each polyhedral subdomain is star-shaped with respect to a ball
  whose diameter is comparable to the diameter of the subdomain;
  (ii) the number of faces of the subdomains is uniformly bounded; and
 (iii) the faces of the subdomains satisfy the two dimensional shape
 regularity assumptions.
 Our error estimates  are optimal up to at most a logarithmic
 factor that involves the ratio of the lengths of the longest edge and
 the shortest edge of each subdomain
  in two dimensions and a similar ratio over the edges of the faces on the
  subdomains in three dimensions.
\par
 The rest of the paper is organized as follows.  We  begin with a
 star-shaped condition
 in Section~\ref{sec:SS}.  Then we treat the two dimensional case in
 Section~\ref{sec:LocalVEM2D} and Section~\ref{sec:Poisson2D},
 where the analysis benefits from the
 techniques developed in \cite{BLR:2016:VEM} and \cite{BGS:2017:VEM2}.
 The extension to the three dimensional Poisson problem is presented in
 Section~\ref{sec:Poisson3D}.
 We end with some concluding remarks in Section~\ref{sec:Conclusions}.
\par
 In order to avoid the proliferation of constants, we will often use the notation
 $A\lesssim B$ to represent the statement that $A\leq (\text{constant}) B$,
 where the positive constant is independent
 of mesh sizes.  The notation $A\approx B$ is equivalent to $A\lesssim B$
 and $B\lesssim A$.
 The precise dependence of the hidden constants will be declared in the text.
\par
 To minimize the technicalities, we also assume that $\O$ is convex so that the
 solution of \eqref{eq:Poisson} belongs to $H^2(\O)$ by
 elliptic regularity \cite{Grisvard:1985:EPN,Dauge:1988:EBV}.
%
%%%%%%%%%%%%%%%%%%%%%%%%%%%%%%%%%%%%%%%%%%%%%%%%%%
\section{A Star-Shaped Condition}\label{sec:SS}
 Let $D$ be a bounded open polygon ($d=2$) or a bounded open
 polyhedron $(d=3)$, and
 $\hD$ be the diameter of $D$.
\par
 We assume that
\begin{equation}\label{eq:SA}
  \text{$D$ is star-shaped with respect to a disc/ball $\fBD\subset D$ with radius $\rD\hD$}.
\end{equation}
\par
 We will denote by $\tBD$ the disc/ball concentric with $\fBD$ whose radius is $\hD$.
 It is clear that
\begin{equation}\label{eq:discs}
  \fBD\subset D\subset \tBD.
\end{equation}
 \par
 Below are some consequences of the star-shaped condition \eqref{eq:SA}.
 The hidden constants in Section~\ref{subsec:Sobolev}--Section~\ref{subsec:PF}
 only depend on $\rD$, while those in Section~\ref{subsec:DE} also depend on $k$.
%
%%%%%%%%%%%%%%%%%%%%%%%%%%%%%%%%%%%%%%%
\subsection{Sobolev Inequalities}\label{subsec:Sobolev}
 It follows from \eqref{eq:SA} that
\begin{alignat}{3}
    \|\zeta\|_{\LinfD}&\lesssim  \hD^{-(d/2)}\|\zeta\|_\LTD+\hD^{1-(d/2)}|\zeta|_{\HOD}
      +\hD^{2-(d/2)}|\zeta|_{H^2(D)}&\qquad&\forall\,
      \zeta\in H^2(D),\label{eq:Sobolev}\\
 \intertext{and in the case where $d=2$,}
      \|\zeta\|_{\LinfD}&\lesssim  \hD^{-1}\|\zeta\|_\LTD+|\zeta|_{\HOD}
      +\hD^{1/2}|\zeta|_{H^{3/2}(D)}&\qquad&\forall\,
      \zeta\in H^{3/2}(D).\label{eq:Sobolev2}
\end{alignat}
 Details can be found in \cite[Lemma~4.3.4]{BScott:2008:FEM} and
 \cite[Section~6]{DS:1980:BH}.
%%%%%%%%%%%%%%%%%%%%%%%%%%%%%%%%
%
\subsection{Bramble-Hilbert Estimates}\label{subsec:BH}
 Condition \eqref{eq:SA} also implies the following Bramble-Hilbert
 estimates \cite{BH:1970:Lemma}:
%\par We have the following Bramble-Hilbert estimates \cite{BH:1970:Lemma}:
%
\begin{alignat}{3}
  \inf_{q\in\bbP_\ell}|\zeta-q|_{H^m(D)}&\lesssim
   h_D^{\ell+1-m}|\zeta|_{H^{\ell+1}(D)}
  &\qquad&\forall\,\zeta\in H^{\ell+1}(D),
  \,\ell=0,\ldots,k,\;\text{and}\; m\leq \ell,
  \label{eq:BHEstimates}\\
   \inf_{q\in\bbP_\ell}|\zeta-q|_{H^m(D)}&
   \lesssim h_D^{\ell+\frac12-m}|\zeta|_{H^{\ell+\frac12}(D)}
  &\qquad&\forall\,\zeta\in H^{\ell+\frac12}(D),
   \,\ell=0,\ldots,k,\;\text{and}\; m\leq \ell.
  \label{eq:BHEstimates2}
\end{alignat}
 Details can be found in \cite[Lemma~4.3.8]{BScott:2008:FEM}  and
 \cite[Section~6]{DS:1980:BH}.
%
%%%%%%%%%%%%%%%%%%%%%%%%%%%%%%%%%%
\subsection{A Lipschitz Isomorphism between $D$ and $\fBD$}\label{subsec:Lipschitz}
\par
 In view of the star-shaped condition \eqref{eq:SA},
 there exists a Lipschitz isomorphism $\Phi:\fBD\longrightarrow D$ such that
 both $|\Phi|_{W^{1,\infty}(\fBD)}$ and $|\Phi^{-1}|_{W^{1,\infty}(D)}$
 are bounded by a constant that
 only depends on $\rD$ (cf.  \cite[Section~1.1.8]{Mazya:2011:Sobolev}).
\par
 It follows that
 \begin{align}
  |D|&\approx \hD^d,\label{eq:G1}\\
  |\p D|&\approx \hD^{d-1},\label{eq:G2}
\end{align}
 where $|D|$ is the area of $D$ ($d=2$) or the volume of $D$ ($d=3$), and $|\p D|$ is the
 arclength  of $\p D$ ($d=2$) or the surface area of $D$ ($d=3$).
\par
 Moreover we have  (cf. \cite[Theorem~4.1]{Wloka:1987:PDE})
\begin{alignat}{3}
  \|\zeta\|_{L_2(\p D)}&\approx \|\zeta\circ\Phi\|_{L_2(\p\fBD)}
  &\qquad&\forall\,\zeta\in L_2(\p D),\label{eq:L2Iso1}\\
  \|\zeta\|_\LTD&\approx \|\zeta\circ\Phi\|_{L_2(\fBD)}
  &\qquad&\forall\,\zeta\in L_2(D),\label{eq:L2Iso2}\\
  |\zeta|_{H^1(\p D)}&\approx|\zeta\circ\Phi|_{H^1(\p\fBD)}
  &\qquad&\forall\,\zeta\in H^1(\p D),\label{eq:H1Iso1}\\
  |\zeta|_{\HOD}&\approx|\zeta\circ\Phi|_{H^1(\fBD)}
  &\qquad&\forall\,\zeta\in \HOD,\label{eq:H1Iso2}\\
  |\zeta|_{H^{1/2}(\p D)}&\approx |\zeta\circ\Phi|_{H^{1/2}(\p\fBD)}
  &\qquad&\forall\,\zeta\in H^{1/2}(\p D).
  \label{eq:HalfIso}
 \end{alignat}
%
 %%%%%%%%%%%%%%%%%%%%%%%%%%%%%%%%%%%%%
\subsection{Poincar\'e-Friedrichs Inequalities}\label{subsec:PF}
 The Bramble-Hilbert estimate \eqref{eq:BHEstimates} and the geometric estimates
 \eqref{eq:G1}--\eqref{eq:G2}  imply the following Poincar\'e-Friedrichs inequalities:
\begin{alignat}{3}
 h_D^{-(d/2)} \|\zeta\|_\LTD&\lesssim h_D^{-d}\Big|\int_{D} \zeta\,dx\Big|
 +\hD^{1-(d/2)}|\zeta|_{\HOD}
    &\qquad&\forall\,\zeta\in \HOD, \label{eq:PF1}\\
    h_D^{-(d/2)} \|\zeta\|_\LTD&\lesssim h_D^{-(d-1)}\Big|\int_{\p D} \zeta\,ds\Big|
    +\hD^{1-(d/2)}|\zeta|_{\HOD}
    &\qquad&\forall\,\zeta\in \HOD.\label{eq:PF2}
\end{alignat}
%
 %%%%%%%%%%%%%%%%%%%%%%%%%
\subsection{Estimates for $|\cdot|_{H^{1/2}(\p D)}$}\label{subsec:Half}
 On the circle $\p \fBD$, we have a standard estimate
\begin{equation*}
   |\zeta|_{H^{1/2}(\p\fBD)}\lesssim \hD^{-1/2}\|\zeta\|_{L_2(\p\fBD)}
   +\hD^{1/2}|\zeta|_{H^1(\p \fBD)}
   \qquad\forall\,\zeta\in H^1(\p \fBD).
\end{equation*}
  It then follows from a Poincar\'e-Friedrichs inequality
  for a circle that
\begin{equation*}
  |\zeta|_{H^{1/2}(\p\fBD)}=|\zeta-\bar\zeta|_{H^{1/2}(\p \fBD)}
        \lesssim \hD^{-1/2}\|\zeta-\bar\zeta\|_{L_2(\p\fBD)}
        +\hD^{1/2}|\zeta|_{H^1(\p \fBD)}
        \lesssim \hD^{1/2}|\zeta|_{H^1(\p\fBD)},
\end{equation*}
  where $\bar\zeta$ is the mean of $\zeta$ over $\p \fBD$.
  Therefore, in view of \eqref{eq:H1Iso1} and \eqref{eq:HalfIso}, we have
 \begin{equation}\label{eq:HalfAndOne}
    |\zeta|_{H^{1/2}(\p D)}\lesssim \hD^{1/2}|\zeta|_{H^1(\p D)}
    \qquad\forall\,\zeta\in H^1(\p D).
\end{equation}
\par
 Similarly, it follows from \eqref{eq:H1Iso2}, \eqref{eq:HalfIso}
 and the trace theorem for $H^1(\fBD)$ that
\begin{alignat}{3}
  |\zeta|_{H^{1/2}(\p D)}&\lesssim |\zeta|_{\HOD}
  &\qquad&\forall\,\zeta\in \HOD.\label{eq:HalfTrace}
\end{alignat}
%
%%%%%%%%%%%%%%%%%%%%%%%%%%%%%%%%%%%%%%%%%%%%%%%%
\subsection{Trace Inequalities}\label{subsec:Trace}
  It follows from \eqref{eq:L2Iso1}, \eqref{eq:L2Iso2}, \eqref{eq:H1Iso2}
  and  standard (scaled) trace inequalities
  for $H^1(\fBD)$    that
\begin{alignat}{3}
  \|\zeta\|_{L_2(\p D)}^2&\lesssim \hD^{-1}\|\zeta\|_\LTD^2+\hD|\zeta|_{\HOD}^2
  &\qquad&\forall\,\zeta\in \HOD.\label{eq:Trace}
\end{alignat}
\par
 We also have trace inequalities for the $H^1$ norm on $\p D$ that require a different derivation.
\begin{lemma}\label{lem:CZ1}
  Let $e$ be an edge of $D\subset \R^2$.  We have
\begin{equation*}%\label{eq:CZ1}
  \hD|\zeta|_{H^1(e)}^2\lesssim |\zeta|_\HOD^2+\hD|\zeta|_{H^{3/2}(D)}^2 \qquad
  \forall\,\zeta\in H^{3/2}(D).
\end{equation*}
\end{lemma}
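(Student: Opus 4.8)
The plan is to reduce to a reference configuration of unit diameter, where the estimate becomes a statement about the fractional trace operator, and then to scale back. Introduce the dilation $x=\hD\,\hat x$ and set $\hat\zeta(\hat x)=\zeta(\hD\hat x)$, so that $\hat D$ has unit diameter and $\hat e$ is the corresponding (possibly short) edge. Since $d=2$, the relevant seminorms transform as $|\zeta|_{H^1(e)}^2=\hD^{-1}|\hat\zeta|_{H^1(\hat e)}^2$, $|\zeta|_\HOD^2=|\hat\zeta|_{H^1(\hat D)}^2$ and $|\zeta|_{H^{3/2}(D)}^2=\hD^{-1}|\hat\zeta|_{H^{3/2}(\hat D)}^2$; a direct substitution shows that the asserted inequality is equivalent to the scale-invariant estimate
\begin{equation*}
 |\hat\zeta|_{H^1(\hat e)}^2\lesssim |\hat\zeta|_{H^1(\hat D)}^2+|\hat\zeta|_{H^{3/2}(\hat D)}^2
 \qquad\forall\,\hat\zeta\in H^{3/2}(\hat D),
\end{equation*}
with a hidden constant that must be controlled solely by $\rD$.

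The first observation is that the left-hand side is the $L_2(\hat e)$ norm of the tangential derivative of $\hat\zeta$ along the straight segment $\hat e$, which coincides with the $H^1(\hat e)$ seminorm of the trace of $\hat\zeta$; in particular it is unchanged if $\hat\zeta$ is replaced by $\hat\zeta-\bar{\hat\zeta}$, where $\bar{\hat\zeta}$ denotes the mean of $\hat\zeta$ over $\hat D$. The key point --- and the reason the estimate is insensitive to the length of $\hat e$ --- is that one should trace onto the entire line $L$ carrying $\hat e$ rather than onto $\hat e$ itself: restricting the resulting tangential derivative from $L$ to the subsegment $\hat e$ only decreases the integral. Concretely, I would take a Stein extension $E\hat\zeta\in H^{3/2}(\R^2)$ whose norm is bounded by $\|\hat\zeta\|_{H^{3/2}(\hat D)}$ with a constant depending only on the Lipschitz character of $\hat D$ (hence only on $\rD$, by the star-shaped condition \eqref{eq:SA}), and then invoke the elementary full-space trace theorem $H^{3/2}(\R^2)\to H^1(L)$ to obtain
\begin{equation*}
 |\hat\zeta|_{H^1(\hat e)}\le |E\hat\zeta|_{H^1(L)}\lesssim \|E\hat\zeta\|_{H^{3/2}(\R^2)}
 \lesssim \|\hat\zeta-\bar{\hat\zeta}\|_{H^{3/2}(\hat D)},
\end{equation*}
the last inequality because the seminorm on the left annihilates the constant $\bar{\hat\zeta}$.

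It remains to eliminate the $L_2$ part of the norm on the right. Writing $\|\hat\zeta-\bar{\hat\zeta}\|_{H^{3/2}(\hat D)}^2=\|\hat\zeta-\bar{\hat\zeta}\|_{L_2(\hat D)}^2+|\hat\zeta|_{H^1(\hat D)}^2+|\hat\zeta|_{H^{3/2}(\hat D)}^2$ and applying the Poincar\'e--Friedrichs inequality \eqref{eq:PF1} in the form $\|\hat\zeta-\bar{\hat\zeta}\|_{L_2(\hat D)}\lesssim|\hat\zeta|_{H^1(\hat D)}$ (valid since $\int_{\hat D}(\hat\zeta-\bar{\hat\zeta})\,d\hat x=0$) yields the scale-invariant estimate, which after multiplication by $\hD$ and the substitutions above gives the assertion. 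The main technical obstacle is the borderline character of the argument: one cannot treat the tangential component of $\nabla\zeta$ as an independent $H^{1/2}(D)$ function and trace it, since $H^{1/2}$ has no $L_2$ trace on $\p D$; the half-derivative of smoothness that legitimizes the trace is precisely the one supplied by tracing $\zeta$ itself out of $H^{3/2}(D)$. The secondary obstacle is checking that the extension and trace constants are uniform over the admissible family of reference domains, which is exactly where the star-shaped hypothesis \eqref{eq:SA} enters, through its control of the Lipschitz character.
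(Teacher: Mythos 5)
Your proposal is correct and follows essentially the same route as the paper's proof: normalize to unit diameter, subtract the mean (using the Poincar\'e--Friedrichs inequality to absorb the $L_2$ part), extend by a universal extension operator whose constant is controlled by the Lipschitz character coming from the star-shaped condition, and trace onto the full line containing $e$ so that the length of $e$ never enters. The only cosmetic difference is that the paper obtains the $H^{3/2}$ bound for the extension by interpolating a Calderon--Zygmund operator between $H^1$ and $H^2$ and traces from a half-plane bordering the line, whereas you invoke Stein's extension directly on $H^{3/2}$ and the full-space trace theorem $H^{3/2}(\R^2)\to H^1(L)$.
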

\begin{proof} By scaling we can assume $\hD=1$. Without
loss of generality we may also assume that
\begin{equation}\label{eq:CZ1Est0}
   \int_D \zeta\,dx=0.
\end{equation}
\par
  The existence of the Lipschitz isomorphism $\Phi:D\longrightarrow \fBD$ implies that the domain
 $D$ satisfies
 a uniform cone condition \cite[Section~4.8]{ADAMS:2003:Sobolev},
 with one reference cone and a finite cover of $\bar D$ that contains a
 fixed number of congruent open discs.  Furthermore, the angle and the height of the
 reference cone and the radius of the open discs only depend on $\rD$.  It follows that there
 exists a Calderon-Zygmund extension operator
 $E:H^1(D)\longrightarrow H^1(\R^2)$
 (cf. \cite[Theorem~5.28]{ADAMS:2003:Sobolev}) such that
 $E$ maps $H^2(D)$ into $H^2(\R^2)$ and the restriction of
 $E\zeta$ to $D$ equals $\zeta$.  Moreover we have
\begin{equation*}
  \|E\zeta\|_{H^1(\R^2)}\lesssim \|\zeta\|_\HOD \quad\forall\,\zeta\in\HOD
  \quad\text{and}\quad
  \|E\zeta\|_{H^2(\R^2)}\lesssim \|\zeta\|_\HTD \quad\forall\,\zeta\in\HTD.
\end{equation*}
 It follows from the interpolation of Sobolev spaces \cite[Chapter~7]{ADAMS:2003:Sobolev}
 that
\begin{align}\label{eq:CZ1Est1}
  \|E\zeta\|_{H^{3/2}(\R^2)}\lesssim
  \|\zeta\|_{H^{3/2}(D)}
  \lesssim  |\zeta|_\HOD
  +|\zeta|_{H^{3/2}(D)}
  \qquad\forall\,\zeta\in H^{3/2}(D),
\end{align}
 where we have used \eqref{eq:PF2} and \eqref{eq:CZ1Est0}.
\par
 Let $e$ be an edge of $D$, $\tilde e$ be the infinite line that contains $e$
  and $G$ be a half-plane that borders $\tilde e$.
 Then we have, by \eqref{eq:CZ1Est1} and
 the trace theorem \cite[Theorem~8.1]{Wloka:1987:PDE},
\begin{align*}%\label{eq:CZ1Est2}
 |\zeta|_{H^1(e)}&=|E\zeta|_{H^1(e)}
          \leq |E\zeta|_{H^1(\tilde e)}
          \lesssim \|E\zeta\|_{H^{3/2}(G)}
          \lesssim  |\zeta|_\HOD+|\zeta|_{H^{3/2}(D)}.
\end{align*}
\end{proof}
\par
 The proof of the following result is similar.
\begin{lemma}\label{lem:CZ2}
  Let $F$ be a face of $D\subset \R^3$.  We have
\begin{equation*}%\label{eq:CZ2}
  \hD|\zeta|_{H^1(F)}^2\lesssim |\zeta|_\HOD^2+\hD^{2}|\zeta|_\HTD^2 \qquad
  \forall\,\zeta\in H^{2}(D).
\end{equation*}

\end{lemma}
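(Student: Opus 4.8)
The plan is to follow the proof of Lemma~\ref{lem:CZ1} line by line, replacing $\R^2$ by $\R^3$ and the edge $e$ by the face $F$. First I would scale so that $\hD=1$; a quick check of the seminorm scaling shows that the factor $\hD^2$ in front of $|\zeta|_\HTD^2$ (rather than the $\hD$ appearing in Lemma~\ref{lem:CZ1}) is exactly what makes the inequality scale-invariant in three dimensions. Since both sides are unchanged when a constant is added to $\zeta$, I may also assume $\int_D\zeta\,dx=0$. As in Lemma~\ref{lem:CZ1}, the Lipschitz isomorphism between $D$ and $\fBD$ supplied by \eqref{eq:SA} yields a uniform cone condition \cite[Section~4.8]{ADAMS:2003:Sobolev} whose reference geometry depends only on $\rD$, and hence a Calderon-Zygmund extension operator $E:H^1(D)\longrightarrow H^1(\R^3)$ (cf. \cite[Theorem~5.28]{ADAMS:2003:Sobolev}) that maps $H^2(D)$ into $H^2(\R^3)$, restricts to the identity on $D$, and satisfies $\|E\zeta\|_{H^1(\R^3)}\lesssim\|\zeta\|_\HOD$ and $\|E\zeta\|_{H^2(\R^3)}\lesssim\|\zeta\|_\HTD$.

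Next I would convert the full norm into seminorms: the Poincar\'e-Friedrichs inequality \eqref{eq:PF1} together with the zero-mean normalization gives $\|\zeta\|_\LTD\lesssim|\zeta|_\HOD$, so that $\|\zeta\|_\HTD\lesssim|\zeta|_\HOD+|\zeta|_\HTD$. Letting $\tilde F$ denote the plane containing $F$ and $G$ a half-space bordered by $\tilde F$, I would then apply the codimension-one trace theorem \cite[Theorem~8.1]{Wloka:1987:PDE} in $\R^3$, which maps $H^2(G)$ continuously into $H^{3/2}(\tilde F)$, followed by the continuous embedding $H^{3/2}(\tilde F)\hookrightarrow H^1(\tilde F)$, to obtain the chain
\begin{align*}
  |\zeta|_{H^1(F)}=|E\zeta|_{H^1(F)}
   \leq|E\zeta|_{H^1(\tilde F)}
   \lesssim\|E\zeta\|_{H^{3/2}(\tilde F)}
   \lesssim\|E\zeta\|_{H^2(G)}
   \lesssim\|\zeta\|_\HTD
   \lesssim|\zeta|_\HOD+|\zeta|_\HTD,
\end{align*}
which is the assertion at $\hD=1$.

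I expect the trace step to be the only genuinely new point compared with Lemma~\ref{lem:CZ1}. Because $F$ is now a two-dimensional face, the trace onto $\tilde F$ loses half a derivative in three dimensions, so controlling $|\zeta|_{H^1(F)}$ forces me to extend $\zeta$ with full $H^2$ regularity and then invoke $H^{3/2}\hookrightarrow H^1$; this is precisely the reason the right-hand side must carry the $H^2$-seminorm $|\zeta|_\HTD$ in place of the $H^{3/2}$-seminorm that sufficed in two dimensions. Everything else---the extension, the cone condition, and the Poincar\'e-Friedrichs reduction---is identical in structure to the two-dimensional argument.
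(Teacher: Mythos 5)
Your proof is correct and is essentially the argument the paper intends: the paper proves Lemma~\ref{lem:CZ2} only by declaring it ``similar'' to Lemma~\ref{lem:CZ1}, and your scaling check, mean-zero normalization via \eqref{eq:PF1}, Calderon--Zygmund extension, and half-space trace argument fill in exactly that outline. One caveat on your closing commentary: the codimension-one trace loses half a derivative in \emph{any} dimension (in the 2D proof one also needs $H^{3/2}(G)$ to control the $H^1$ seminorm on $\tilde e$), so an $H^{3/2}(D)$ version of the lemma would hold in three dimensions as well; the $H^2$ form is stated because that is the regularity available in the later applications (e.g.\ \eqref{eq:IFHone}), not because the trace step forces a full $H^2$ extension.
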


%%%%%%%%%%%%%%%%%%%%
\subsection{A Lifting Operator}\label{subsec:Lifting}
 It follows from \eqref{eq:L2Iso2}, \eqref{eq:H1Iso2}, \eqref{eq:HalfIso} and the
 inverse trace theorem for $H^1(\fBD)$
 (cf. \cite[Theorem~8.8]{Wloka:1987:PDE}) that
 there exists a linear operator $\TL:H^{1/2}(\p D)\longrightarrow \HOD$
 such that
\begin{equation*}%\label{eq:Lifting}
 \TL \zeta=\zeta\;\text{on}\;\p D
 %\int_D \TL \zeta\,dx=0
 \quad\text{and}\quad
 \hD^{-1}\|\TL \zeta\|_\LTD+|\TL \zeta|_{\HOD}\leq C|\zeta|_{H^{1/2}(\p D)},
\end{equation*}
 where the constant $C$ depends only on $\rD$.
 %
%%%%%%%%%%%%%%%%%%%%%%%%%%%%%%%%%%%%%%%
\subsection{Some Estimates for Polynomials}\label{subsec:DE}
 Let $\bbP_k$ be the space of polynomials of total degree $\leq k$ in $d$ variables.
 We obtain the following estimates by using the equivalence of norms
 on finite dimensional vector spaces and scaling.
\begin{lemma}\label{lem:DiscreteEstimates}
 We have
\begin{alignat}{3}
   \|p\|_{L_2(\p D)}^2&\lesssim \hD^{-1}\|p\|_\LTD^2&\qquad&\forall\,p\in\bbP_k,
   \label{eq:DiscreteEstimate0}\\
    |p|_{\HOD}&\lesssim  h_D^{-1}\|p\|_\LTD
      &\qquad&\forall\,p\in \bbP_k,
      \label{eq:DiscreteEstimate1}\\
      \|p\|_{L_\infty(D)}&\lesssim |\bar{p}_{_{\p D}}|+\hD^{1-(d/2)}
      |p|_{\HOD}&\qquad&\forall\,p\in\bbP_k,
       \label{eq:DiscreteEstimate2}\\
          \|p\|_{L_\infty(D)}&\lesssim |\bar{p}_{_{D}}|+\hD^{1-(d/2)}
          |p|_{\HOD}&\qquad&\forall\,p\in\bbP_k,
       \label{eq:DiscreteEstimate3}
\end{alignat}
 where $\bar p_{_{\p D}}$ is the mean of $p$ over $\p D$ and
 $\bar p_{_D}$ is the mean of $p$ over $D$.
\end{lemma}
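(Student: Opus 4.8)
The plan is to prove each of the four polynomial estimates by the standard device of scaling to a reference configuration and invoking equivalence of norms on the finite-dimensional space $\bbP_k$, transferring the resulting reference constants back to $D$ through the geometric equivalences \eqref{eq:G1}--\eqref{eq:G2}. Since all quantities are homogeneous in $\hD$, I would first reduce to $\hD=1$ by scaling $D$ to a domain $\hat D$ of unit diameter; this is legitimate because both sides of each inequality carry matching powers of $\hD$, which can be recovered at the end by restoring the scaling. On $\hat D$ the key point is that $\hat D$ is sandwiched between concentric discs/balls $\fBD\subset\hat D\subset\tBD$ with $\tBD$ of unit radius and $\fBD$ of radius $\rD$, so all norms over $\hat D$ are comparable to norms over the fixed ball $\tBD$, and the constants depend only on $\rD$ and (through the dimension of $\bbP_k$) on $k$.

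For \eqref{eq:DiscreteEstimate1}, I would observe that both $|\cdot|_{H^1(\hat D)}$ and $\|\cdot\|_{L_2(\hat D)}$ are (semi)norms on the finite-dimensional space $\bbP_k$ restricted to $\hat D$, the latter being a genuine norm; since $\hat D$ has unit diameter, $|p|_{H^1(\hat D)}\lesssim\|p\|_{L_2(\hat D)}$ by norm equivalence, and restoring the scaling inserts the factor $\hD^{-1}$. The same argument, comparing $\|\cdot\|_{L_2(\p\hat D)}$ with $\|\cdot\|_{L_2(\hat D)}$ on $\bbP_k$, gives \eqref{eq:DiscreteEstimate0}, with the $\hD^{-1}$ appearing because the boundary $L_2$-norm scales with one fewer power of length than the interior norm. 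For the two $L_\infty$ estimates \eqref{eq:DiscreteEstimate2}--\eqref{eq:DiscreteEstimate3}, I would split a polynomial as $p=(p-\bar p)+\bar p$, where $\bar p$ is either the boundary mean $\bar p_{_{\p D}}$ or the domain mean $\bar p_{_D}$; then $\|\bar p\|_{L_\infty(D)}=|\bar p|$, while for the mean-zero part $p-\bar p$ I would use the Poincar\'e--Friedrichs inequalities \eqref{eq:PF1} and \eqref{eq:PF2} to bound $\|p-\bar p\|_{L_2(\hat D)}\lesssim|p|_{H^1(\hat D)}$, followed by the inverse estimate $\|q\|_{L_\infty(\hat D)}\lesssim\|q\|_{L_2(\hat D)}$ for $q\in\bbP_k$ (again norm equivalence), and finally restore the scaling to produce the factor $\hD^{1-(d/2)}$.

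I would use \eqref{eq:PF1} for the domain-mean version \eqref{eq:DiscreteEstimate3} and \eqref{eq:PF2} for the boundary-mean version \eqref{eq:DiscreteEstimate2}, noting in each case that the integral term $\int_D(p-\bar p)\,dx$ or $\int_{\p D}(p-\bar p)\,ds$ vanishes by the choice of mean, leaving only the $H^1$-seminorm on the right. The main subtlety to watch is the bookkeeping of the powers of $\hD$: the $L_\infty$-norm is scale-invariant, the domain mean is scale-invariant, but $\|p-\bar p\|_{L_2(D)}$ and $|p|_{H^1(D)}$ carry the dimension-dependent factors $\hD^{d/2}$ and $\hD^{(d/2)-1}$ respectively, so the inverse inequality $\|p-\bar p\|_{L_\infty}\lesssim\hD^{-d/2}\|p-\bar p\|_{L_2(D)}$ combines with these to yield exactly $\hD^{1-(d/2)}|p|_{H^1(D)}$. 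This tracking of scaling exponents, rather than any deep analytic difficulty, is the one place where an error is easy to make; everything else follows mechanically from finite-dimensional norm equivalence on $\bbP_k$ together with the already-established geometric and Poincar\'e--Friedrichs estimates.
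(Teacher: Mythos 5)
Your overall strategy is the same as the paper's: anchor every finite-dimensional norm equivalence on the concentric discs/balls $\fBD\subset D\subset\tBD$ from \eqref{eq:discs} so that the constants depend only on $\rD$ and $k$, then bring in \eqref{eq:G1}--\eqref{eq:G2} and \eqref{eq:PF1}--\eqref{eq:PF2}. Your argument for \eqref{eq:DiscreteEstimate1} is exactly the paper's (monotonicity of solid-region integrals under inclusion, plus equivalence of norms for $\bbP_k$ on the two balls), and your mean-splitting treatment of \eqref{eq:DiscreteEstimate2}--\eqref{eq:DiscreteEstimate3} is a harmless variant of the paper's, which instead applies the $L_\infty$--$L_2$ inverse estimate to $p$ itself and then uses \eqref{eq:PF1}--\eqref{eq:PF2} together with \eqref{eq:G1}--\eqref{eq:G2} to convert the integral terms into the means $\bar p_{_D}$, $\bar p_{_{\p D}}$; both versions use the same ingredients and the scaling bookkeeping you describe is correct.

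The one step that does not hold up as written is \eqref{eq:DiscreteEstimate0}. You propose ``the same argument, comparing $\|\cdot\|_{L_2(\p\hat D)}$ with $\|\cdot\|_{L_2(\hat D)}$ on $\bbP_k$,'' but the mechanism that made the comparison uniform for \eqref{eq:DiscreteEstimate1} is unavailable here. Norm equivalence on $\bbP_k$ with domain-dependent (semi)norms only yields a constant depending on that domain, and $\hat D$ ranges over an infinite family of shapes; for the interior norms this was rescued by the sandwich, i.e., by monotonicity of $\int_{\hat D}$ under inclusion combined with ball-to-ball equivalence. A boundary integral is not monotone under domain inclusion: $\p\hat D$ is not contained in $\p\fBD$ or $\p\tBD$, so $\|p\|_{L_2(\p\hat D)}$ cannot be sandwiched between $\|p\|_{L_2(\p\fBD)}$ and $\|p\|_{L_2(\p\tBD)}$, and a direct equivalence on $\hat D$ gives an a priori shape-dependent constant. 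The fix is the paper's route, for which you already have every ingredient: bound $\|p\|_{L_2(\p D)}^2\leq |\p D|\,\|p\|_{L_\infty(D)}^2\lesssim \hD^{d-1}\|p\|_{L_\infty(D)}^2$ using \eqref{eq:G2}, and then apply the uniform inverse estimate $\|p\|_{L_\infty(D)}\lesssim \hD^{-d/2}\|p\|_\LTD$ obtained from the sandwich (this is \eqref{eq:DS2} in the paper). With that replacement your proof is complete.
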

\begin{proof}
 In view of \eqref{eq:discs}, we have
\begin{align}\label{eq:DS2}
 \|p\|_{L_\infty(D)}\leq \|p\|_{L_\infty(\tilde\fB_{\ssD})}
 &\lesssim \|p\|_{L_\infty(\fB_{\ssD})}\\
 &\lesssim (\text{diam}\,\fB_{\ssD})^{-d/2}\|p\|_{L_2(\fB_{\ssD})}
 \leq (\text{diam}\,\fB_{\ssD})^{-d/2}\|p\|_\LTD.\notag
\end{align}
 The estimate \eqref{eq:DiscreteEstimate0}
 then follows from \eqref{eq:G2} and \eqref{eq:DS2}:
\begin{align*}
  \|p\|_{L_2(\p D)}^2
                     \lesssim \hD^{d-1}\|p\|_{L_\infty(D)}^2
                     \lesssim \hD^{-1}\|p\|_\LTD^2.
\end{align*}
\par
 Similarly, we have
\begin{equation*}
  |p|_{\HOD}\leq |p|_{H^1(\tilde\fB_{\ssD})}\lesssim |p|_{H^1(\fB_{\ssD})}
  \lesssim (\text{diam}\,\fB_{\ssD})^{-1}\|p\|_{L_2(\fB_{\ssD})}\leq
  (\text{diam}\,\fB_{\ssD})^{-1}\|p\|_\LTD,
\end{equation*}
 which together with \eqref{eq:SA} implies \eqref{eq:DiscreteEstimate1}.
\par
 The estimates \eqref{eq:DiscreteEstimate2} and \eqref{eq:DiscreteEstimate3}
 follow immediately from \eqref{eq:G1}--\eqref{eq:G2}, \eqref{eq:PF1}--\eqref{eq:PF2}  and
 \eqref{eq:DS2}.
\end{proof}
\begin{lemma}\label{lem:Polynomial}
  Given any $p\in \bbP_{k-2}$ $(k\geq2)$, there exists $q\in\bbP_k$ such that
  $\Delta q=p$ and
\begin{equation*}
    \|\nabla q\|_{L_2(B)}\leq C (\mathrm{diam}\, B)\|p\|_{L_2(B)},%\label{eq:Polynomial}
\end{equation*}
 where $B\subset \R^d$ is any ball and the positive constant $C$ depends only on $k$.
\end{lemma}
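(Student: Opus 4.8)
The plan is to reduce the statement to a single reference ball by an affine change of variables, and then to produce $q$ from a fixed right inverse of the Laplacian, whose boundedness is automatic by the equivalence of norms on finite dimensional spaces.

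First I would normalize by scaling. Write $B=B(x_0,r)$, so that $\mathrm{diam}\,B=2r$, and introduce the affine map $x=x_0+r\hat x$ carrying the unit ball $\hat B=B(0,1)$ onto $B$. Given $p\in\bbP_{k-2}$, set $\hat p(\hat x)=p(x_0+r\hat x)\in\bbP_{k-2}$. If $\hat q\in\Pk$ satisfies $\Delta\hat q=\hat p$ on $\hat B$, then $q(x):=r^2\hat q\big((x-x_0)/r\big)$ belongs to $\Pk$ and satisfies $\Delta q=p$. Computing the Jacobian of the change of variables gives $\|\nabla q\|_{L_2(B)}=r^{(d+2)/2}\|\nabla\hat q\|_{L_2(\hat B)}$ and $\|p\|_{L_2(B)}=r^{d/2}\|\hat p\|_{L_2(\hat B)}$, so any estimate $\|\nabla\hat q\|_{L_2(\hat B)}\le C\|\hat p\|_{L_2(\hat B)}$ on the unit ball immediately yields $\|\nabla q\|_{L_2(B)}\le C\,r\,\|p\|_{L_2(B)}=\tfrac{C}{2}(\mathrm{diam}\,B)\|p\|_{L_2(B)}$, which is the claimed inequality. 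Thus it suffices to treat the reference ball $\hat B$, with a constant allowed to depend on $k$ (and on the fixed dimension $d$).

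The crucial algebraic input is that $\Delta:\Pk\longrightarrow\bbP_{k-2}$ is surjective. This is classical: passing to homogeneous parts, it is enough that $\Delta$ maps degree-$m$ homogeneous polynomials onto those of degree $m-2$, which follows from the harmonic (Fischer) decomposition of homogeneous polynomials, or equivalently from the rank–nullity count in which the kernel of $\Delta$ on degree-$m$ homogeneous polynomials is exactly the space of harmonic homogeneous polynomials of degree $m$. I would simply record this surjectivity as a known fact. Granting it, the quantitative estimate on $\hat B$ is automatic. Let $\cN=\{v\in\Pk:\Delta v=0\}$ be the finite dimensional space of harmonic polynomials of degree $\le k$, and fix any algebraic complement $W$ of $\cN$ in $\Pk$. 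Since $\Delta(\Pk)=\Delta(W)$ and $\ker\Delta\cap W=\cN\cap W=\{0\}$, the map $\Delta|_W:W\to\bbP_{k-2}$ is a linear isomorphism, and its inverse $S:\bbP_{k-2}\to W\subset\Pk$ is well defined. Setting $\hat q=S\hat p$ gives $\Delta\hat q=\hat p$. Moreover $|\cdot|_{H^1(\hat B)}$ is genuinely a norm on $W$: if $v\in W$ has $\nabla v\equiv0$ on $\hat B$ then $v$ is a constant, hence harmonic, hence $v\in\cN\cap W=\{0\}$. As $S$ is a linear map between the finite dimensional normed spaces $(\bbP_{k-2},\|\cdot\|_{L_2(\hat B)})$ and $(W,|\cdot|_{H^1(\hat B)})$, it is bounded, so $|\hat q|_{H^1(\hat B)}=|S\hat p|_{H^1(\hat B)}\le C\|\hat p\|_{L_2(\hat B)}$ with $C=\|S\|$ depending only on $k$. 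Combined with the scaling of the previous step, this proves the lemma.

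The only genuine content is the surjectivity of $\Delta$ on polynomial spaces; everything else is scaling together with the equivalence of norms on finite dimensional vector spaces. The factor $\mathrm{diam}\,B$ is forced by homogeneity, since $\nabla q$ carries one derivative while $p=\Delta q$ carries two, so nothing is lost in the scaling. I do not expect any real difficulty here; the main point to be careful about is to define the right inverse $S$ on the reference ball once and for all, so that the resulting constant depends on $k$ alone and is independent of the position and size of $B$.
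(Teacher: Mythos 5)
Your proposal is correct and follows essentially the same route as the paper: reduce to the unit ball by scaling, use the classical surjectivity of $\Delta:\bbP_k\to\bbP_{k-2}$ to obtain a linear right inverse, and conclude by finite dimensionality. The only cosmetic difference is that the paper bounds $\|\Delta^\dag p\|_{L_2}$ by norm equivalence and then applies a standard inverse estimate to reach the gradient, whereas you build the right inverse on a fixed complement $W$ of the harmonic polynomials and observe directly that $|\cdot|_{H^1}$ is a norm on $W$; both are instances of the same finite-dimensional argument.
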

\begin{proof}  By scaling
  it suffices to treat the case where $B$ is a unit ball.
  Since $\Delta$ maps $\Pk$ onto $\bbP_{k-2}$, there exists an operator
 $\Delta^\dag:\bbP_{k-2}\longrightarrow \Pk$ such that $\Delta \Delta^\dag$ is the
 identity operator on $\bbP_{k-2}$, and we can take $q=\Delta^\dag p$.
   The lemma follows from the observation that
 both $\|p\|_{L_2(B)}$ and $\|\Delta^\dag p\|_{L_2(B)}$ are norms on
 $\bbP_{k-2}$ together with a standard inverse estimate
 \cite{Ciarlet:1978:FEM,BScott:2008:FEM}.
\end{proof}
\par

%%%%%%%%%%%%%%%%%%%%%%%%%%
\section{Local Virtual Element Spaces in Two Dimensions}\label{sec:LocalVEM2D}
%%%%%%%%%%%%%%%%%%%%%%%%%%%%%%%%
%
 In this section we obtain properties of the local virtual element spaces
 that will be used in the stability
 and error analyses in Section~\ref{sec:Poisson2D}.
\par
   Let the space $\PkD$ be the restriction of $\bbP_k$
 to $D$ and the space of $\PkbD$ be defined by
\begin{equation*}
  \PkbD=\{v\in C(\p D):\,v\big|_e\in \bbP_k(e) \;\text{for all}\;e\in\ED\},
\end{equation*}
 where $C(\p D)$ is the space of continuous functions on $\p D$,
 $\bbP_k(e)$
 is the restriction of $\bbP_k$ to the edges $e$, and $\ED$ is the set
 of the edges of $D$.  The length of an edge $e$ is denoted by $\hE$.
%
%%%%%%%%%%%%%%%%%%%%%%%%%%%%%%%%%%%%%%%%%%%%%%
%
\subsection{The Projection $\POD$ and the Space $\cQD$}\label{subsec:PODCQD}
 The Sobolev space $\HOD$ is a Hilbert space under the inner product
\begin{equation}\label{eq:InnerProduct}
 (\!(\zeta,\eta)\!)=(\nabla \zeta,\nabla \eta)
     +\Big(\int_{\p D}\zeta\,ds\Big)\Big(\int_{\p D} \eta\,ds\Big).
\end{equation}
 The projection operator  from $\HOD$ onto $\PkD$ with respect
 to $(\!(\cdot,\cdot)\!)$ is denoted by $\POD$, i.e.,
 $\POD \zeta\in \PkD$ satisfies
\begin{equation}\label{eq:PODDef}
  (\!(\POD\zeta,q)\!)=(\!(\zeta,q)\!) \qquad\forall\,q\in\PkD.
\end{equation}
 In particular we have
\begin{equation}\label{eq:Projection}
 \POD p=p \qquad\forall\,p\in\PkD.
\end{equation}
\par
 It is straight-forward to check that \eqref{eq:PODDef} is equivalent to
\begin{alignat}{3}
   \int_D \nabla (\POD\zeta)\cdot\nabla q\,dx&=\int_D
  \nabla \zeta\cdot \nabla q\,dx\label{eq:POD1}\\
   &=\int_{\p D} \zeta(n\cdot\nabla q)\,ds-\int_D \zeta(\Delta q)\,dx
  &\qquad&\forall\,q\in \PkD,\notag
\end{alignat}
 together with
\begin{alignat}{3}
   \int_{\p D} \POD\zeta\,ds&=\int_{\p D}\zeta\,ds.
   \label{eq:POD2}
\end{alignat}
\par
 For $k\geq 1$,
 the virtual element space $\cQD\subset \HOD$ is defined by the
 following conditions:  $v\in \HOD$ belongs to $\cQD$ if and only if
 (i) the trace of $v$ on $\p D$ belongs to $\PkbD$,
  (ii) the distribution $-\Delta v$ belongs to $\PkD$,
 and (iii) we have
\begin{equation}\label{eq:Condition3}
 \PDZ v-\POD v\in \PkmtD,
\end{equation}
 where $\PDZ$ is the projection from $L_2(D)$ onto $\PkD$ and
 $\bbP_{-1}(D)=\{0\}$.
\begin{remark}\label{rem:Continuity}{\rm
 It follows from elliptic regularity for bounded Lipschitz domains
 \cite[Section~1.2]{Kenig:1994:CBMS}
 and conditions (i) and (ii) in the definition of $\cQD$ that
 $\cQD\subset C(\bar D)$.}
\end{remark}
\begin{remark}\label{rem:dof}{\rm The dimension of $\cQD$ is
 the sum of the dimension of $\PkbD$ and
 the dimension of $\PkmtD$ (cf. \cite{AABMR:2013:Projector}).
 The degrees of freedom consist of (i) the values of $v$ at
 the vertices of $D$ and
 at the points in the interior of the edges  that  together determine
 $\PkbD$,  and (ii) the moments of
 $\PDZmt v$.  The set of the nodes in (i) will be denoted by $\NPD$.}
\end{remark}
\begin{remark}\label{rem:Computable}
{\rm
 It follows from \eqref{eq:POD1} and \eqref{eq:POD2}
  that the polynomial $\POD v$ can be computed in terms of the
  degrees of freedom of
  $v\in\cQD$.  Moreover, the polynomial
  $\PDZ v$ can also be computed through \eqref{eq:Condition3}.}
\end{remark}
%%%%%%%%%%%%%%%%%%
\subsection{A Minimum Energy Principle}\label{subsec:MEP}
 The following minimum energy principle is useful for bounding the $H^1$
 norm of a virtual element function.
\begin{lemma}\label{lem:MEP}
   The inequality
\begin{equation*}%\label{eq:MEP}
  |v|_{\HOD}\leq |\zeta|_{\HOD}
\end{equation*}
 holds for any $v\in\cQD$ and $\zeta\in \HOD$ such that $\zeta-v=0$
 on $\p D$ and $\PDZ(\zeta-v)=0$.
\end{lemma}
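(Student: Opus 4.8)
The plan is to exploit the two defining properties of a virtual element function that are relevant here—namely that $-\Delta v$ is a polynomial in $\PkD$ (condition (ii)) and that the remaining gap between $\zeta$ and $v$ is $L_2(D)$-orthogonal to $\PkD$—together with the fact that $\zeta-v$ vanishes on $\p D$. First I would set $w=\zeta-v$. Since $\zeta,v\in\HOD$ and $w=0$ on $\p D$, we have $w\in H^1_0(D)$. Expanding the square of the seminorm,
\[
 |\zeta|_{\HOD}^2=|v+w|_{\HOD}^2=|v|_{\HOD}^2+2\int_D\nabla v\cdot\nabla w\,dx+|w|_{\HOD}^2,
\]
so the lemma reduces to showing that the cross term $\int_D\nabla v\cdot\nabla w\,dx$ vanishes; the desired inequality then follows at once from $|w|_{\HOD}^2\geq0$.

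To kill the cross term I would use the distributional identity $-\Delta v=p$ for some $p\in\PkD$. Because $w\in H^1_0(D)$ and $p\in\PkD\subset L_2(D)$, testing this distributional equation against $w$ gives
\[
 \int_D\nabla v\cdot\nabla w\,dx=\int_D p\,w\,dx.
\]
Now the hypothesis $\PDZ(\zeta-v)=\PDZ w=0$ says precisely that $w$ is $L_2(D)$-orthogonal to $\PkD$; since $p\in\PkD$, the right-hand side is zero. Hence $\int_D\nabla v\cdot\nabla w\,dx=0$ and the estimate follows.

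The only point that requires a little care is the integration by parts: I want to avoid assuming $v\in H^2(D)$. The clean route is to interpret $-\Delta v=p$ in the sense of distributions, so that the identity $\int_D\nabla v\cdot\nabla\phi\,dx=\int_D p\,\phi\,dx$ holds for all $\phi\in C_c^\infty(D)$, and then to extend the test-function class to $H^1_0(D)$ by density. This extension is legitimate because both sides are continuous in the $H^1(D)$-norm of the test function once $p\in L_2(D)$. Everything else is a one-line orthogonality argument, so I do not anticipate any genuine obstacle; the substance of the lemma is simply the observation that the defining conditions on $\cQD$ are exactly what is needed to make $v$ the minimizer of the Dirichlet energy among all $H^1(D)$ functions sharing its boundary trace and its $L_2$-projection onto $\PkD$.
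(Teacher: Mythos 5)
Your proposal is correct and follows essentially the same route as the paper: both kill the cross term $\int_D\nabla v\cdot\nabla(\zeta-v)\,dx$ by using condition (ii) ($-\Delta v\in\PkD$) together with the orthogonality $\PDZ(\zeta-v)=0$, and then conclude via the Pythagorean expansion of $|\zeta|_{\HOD}^2$. Your extra remark about justifying the integration by parts distributionally, via density of $C_c^\infty(D)$ in $H^1_0(D)$, is simply the careful version of the step the paper states tersely.
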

\begin{proof}  It follows from condition (ii) in the definition of $\cQD$ that
\begin{equation*}
   \int_D \nabla v\cdot\nabla (\zeta-v)\,dx=\int_D (-\Delta v) (\zeta-v)dx=0
\end{equation*}
 and hence
 $
   |\zeta|_{H^1(D)}^2=|\zeta-v|_{H^1(D)}^2+|v|_{H^1(D)}^2.
 $
\end{proof}
%
%%%%%%%%%%%%%%%%%%%%%%%%%%%%%%
\subsection{A Maximum Principle}\label{subsec:MP}
 We begin with a result from \cite[Lemma~3.3]{BLR:2016:VEM}.
\begin{lemma}\label{lem:BLR}
   There exists a positive constant $C$, depending only on
   $\rD$ and $k$, such that
\begin{equation}\label{eq:BLR}
 \|\Delta v\|_\LTD\leq C\hD^{-1}\|\nabla v\|_\LTD \qquad\forall\,v\in\cQD.
\end{equation}
\end{lemma}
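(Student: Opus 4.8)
The plan is to use the only structural feature that separates a virtual function from a generic $H^1$ function, namely condition (ii) in the definition of $\cQD$: for $v\in\cQD$ the distribution $\Delta v$ equals a polynomial $p\in\PkD$. The assertion is therefore the polynomial inverse estimate $\|p\|_\LTD\lesssim\hD^{-1}\|\nabla v\|_\LTD$, and the guiding idea is that, since $p$ is a polynomial on a domain trapped between two concentric balls of comparable radii (see \eqref{eq:discs}), its full $L_2(D)$ norm is already controlled by its $L_2$ norm over a fixed interior ball. This lets me localize strictly inside $D$ and never touch the boundary flux of $v$, which one cannot control for a virtual function. First I would fix a cut-off $\eta\in C_c^\infty(\fBD)$ with $0\le\eta\le1$, with $\eta\equiv1$ on the concentric ball $B'$ of radius $\tfrac12\rD\hD$, and with $\|\nabla\eta\|_{L_\infty}\lesssim(\rD\hD)^{-1}\lesssim\hD^{-1}$; this is possible because $\fBD\subset D$ has radius $\rD\hD$, so $\eta$ has compact support inside $D$ and the hidden constant depends only on $\rD$.

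The heart of the argument is the localized identity obtained by testing against $\eta p\in C_c^\infty(D)$. Since $\Delta v=p\in\LTD$, the definition of the distributional Laplacian together with one integration by parts (legitimate for $v\in\HOD$ and $\eta p\in C_c^\infty(D)$, so that no boundary term survives) yields
\begin{equation*}
  \int_D\eta\,p^2\,dx=\int_D(\eta p)\,\Delta v\,dx=-\int_D\nabla(\eta p)\cdot\nabla v\,dx=-\int_D\eta\,\nabla p\cdot\nabla v\,dx-\int_D p\,\nabla\eta\cdot\nabla v\,dx.
\end{equation*}
Both terms on the right are handled by Cauchy--Schwarz. For the first I would invoke the polynomial inverse estimate \eqref{eq:DiscreteEstimate1}, $|p|_\HOD\lesssim\hD^{-1}\|p\|_\LTD$, giving a bound $\lesssim\hD^{-1}\|p\|_\LTD\|\nabla v\|_\LTD$; for the second the cut-off bound $\|\nabla\eta\|_{L_\infty}\lesssim\hD^{-1}$ gives the same bound. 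Hence $\int_D\eta\,p^2\,dx\lesssim\hD^{-1}\|p\|_\LTD\|\nabla v\|_\LTD$.

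It then remains to recover the full norm of $p$ from the interior. Because $\eta\ge0$ and $\eta\equiv1$ on $B'$, we have $\int_D\eta\,p^2\,dx\ge\|p\|_{L_2(B')}^2$, while $D\subset\tBD$ from \eqref{eq:discs} gives $\|p\|_\LTD\le\|p\|_{L_2(\tBD)}$. Since $p\in\Pk$ and $B'$ and $\tBD$ are concentric balls whose radii are comparable (ratio depending only on $\rD$), the equivalence of norms on the finite-dimensional space $\Pk$ — the same device used in \eqref{eq:DS2} — yields $\|p\|_{L_2(\tBD)}\lesssim\|p\|_{L_2(B')}$, with hidden constant depending only on $\rD$ and $k$. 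Combining these,
\begin{equation*}
  \|p\|_\LTD^2\lesssim\|p\|_{L_2(B')}^2\le\int_D\eta\,p^2\,dx\lesssim\hD^{-1}\|p\|_\LTD\|\nabla v\|_\LTD,
\end{equation*}
and dividing by $\|p\|_\LTD$ gives $\|\Delta v\|_\LTD=\|p\|_\LTD\lesssim\hD^{-1}\|\nabla v\|_\LTD$.

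The step I expect to be the main obstacle is making the localization genuinely geometry-robust. One must be certain that the interior ball $B'$ on which $\eta\equiv1$ can be chosen with radius comparable to $\hD$, so that the polynomial norm equivalence costs only a constant depending on $\rD$ and $k$ rather than on the number or the sizes of the edges of $D$, and that $\eta$ can be supported strictly inside $D$ so that the uncontrollable normal derivative of $v$ on $\p D$ never appears. Both are furnished by the star-shaped condition \eqref{eq:SA} through the inscribed ball $\fBD$, which is precisely why the final constant depends only on $\rD$ and $k$.
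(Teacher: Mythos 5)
Your proposal is correct and follows essentially the same route as the paper's proof: both test the polynomial $p=\Delta v$ against a smooth cutoff (your $\eta p$, the paper's $\phi\,\Delta v$ with a bump function $\phi$ supported in $\fBD$), integrate by parts once so no boundary flux of $v$ appears, bound the two resulting terms via the polynomial inverse estimate \eqref{eq:DiscreteEstimate1} and the cutoff gradient bound, and recover the full $\|p\|_\LTD$ from the interior by equivalence of norms on $\bbP_k$ together with \eqref{eq:discs}. The only cosmetic differences are that the paper scales to $\hD=1$ first and uses the weighted norm equivalence $\|p\|_{L_2(\fBD)}^2\lesssim\int_{\fBD}p^2\phi\,dx$ directly, where you instead take $\eta\equiv 1$ on a smaller concentric ball $B'$ and compare $\|p\|_{L_2(\tBD)}$ with $\|p\|_{L_2(B')}$.
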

\begin{proof} By scaling we may assume $\hD=1$.
\par
  Let $\phi\geq0$ be a smooth (bump) function supported on the disc
  $\fBD$ with radius $\rD$
  such that
\begin{equation}\label{eq:BumpFunction}
  \int_D \phi\,dx=1, \quad |\phi|\lesssim 1 \quad \text{and}
  \quad |\nabla\phi(x)|\lesssim 1.
\end{equation}
 We have,
 by  the equivalence of norms on finite dimensional vector spaces, scaling
 and \eqref{eq:discs},
\begin{equation}\label{eq:Fundamental1}
  \|p\|_\LTD^2\leq  \|p\|_{L_2(\tBD)}^2\lesssim \|p\|_{L_2(\fBD)}^2
  \lesssim \int_{\fBD} p^2\phi\,dx
  \qquad\forall\,p\in\bbP_k.
\end{equation}
\par
 Since $\Delta v\in\bbP_k$,  it follows from \eqref{eq:DiscreteEstimate1}
 (with $\hD=1$), \eqref{eq:Fundamental1}
 and integration by parts that
\begin{align*}
  \|\Delta v\|_\LTD^2&\lesssim \int_{\fBD} (\Delta v)^2\phi\,dx\\
    &= -\int_{\fBD} \nabla v\cdot(\phi\nabla(\Delta v)+(\Delta v)\nabla\phi\big)dx\\
          &\lesssim \|\nabla v\|_\LTD \big(\|\nabla(\Delta v)\|_\LTD+\|\Delta v\|_\LTD\big)
           \lesssim  \|\nabla v\|_\LTD\big(\|\Delta v\|_\LTD\big),
\end{align*}
 which implies \eqref{eq:BLR} (with $\hD=1$).
\end{proof}
  The following maximum principle will be used in the analysis of the
  interpolation operator in Section~\ref{subsec:Interpolation}
  (cf. Lemma~\ref{lem:1DInterpolationError}),
  and in the stability and error analyses for
  virtual element methods in three dimensions (cf. \eqref{eq:3DIDtbar}
  and Lemma~\ref{lem:3DSDBdd}).
\begin{lemma}\label{lem:MaximumPrinciple}
  There exists a positive constant $C$, depending only on $\rD$ and $k$, such that
\begin{equation*}
  \|v\|_{\LinfD}\leq C\big[ \|v\|_{L_\infty(\p D)}+|v|_\HOD\big]
  \qquad\forall\,v\in \cQD.
\end{equation*}
\end{lemma}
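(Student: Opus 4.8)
The plan is to split $v$ into a polynomial piece that carries the Laplacian and a harmonic remainder, so that the classical maximum principle can be applied to the latter. Since $D\subset\R^2$ and $v\in\cQD$, condition (ii) in the definition of $\cQD$ gives $p:=-\Delta v\in\PkD$. First I would produce a polynomial $q$ with $\Delta q=-p$ by the argument of Lemma~\ref{lem:Polynomial}: although that lemma is stated for a right-hand side in $\bbP_{k-2}$, the map $\Delta:\bbP_{k+2}\to\bbP_k$ is onto, so the identical norm-equivalence-and-scaling argument furnishes $q\in\bbP_{k+2}$ with $\Delta q=-p$ and $\|\nabla q\|_{L_2(\tBD)}\lesssim\hD\|p\|_{L_2(\tBD)}$, the hidden constant depending only on $k$. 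Because $q$ is determined only up to an additive harmonic polynomial, I would normalize it by subtracting its mean over $D$, so that $\bar{q}_{_D}=0$ while $\Delta q$ and $|q|_\HOD$ are unchanged.

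The next step is to bound $q$. Using the equivalence of the $L_2$ norms of polynomials over $\fBD$, $D$ and $\tBD$ (as in \eqref{eq:Fundamental1}, together with $\fBD\subset D\subset\tBD$ from \eqref{eq:discs}) I get $\|p\|_{L_2(\tBD)}\lesssim\|p\|_\LTD=\|\Delta v\|_\LTD$, and then Lemma~\ref{lem:BLR} yields $\|\Delta v\|_\LTD\lesssim\hD^{-1}\|\nabla v\|_\LTD$. Chaining these,
\[
  |q|_\HOD\le\|\nabla q\|_{L_2(\tBD)}\lesssim\hD\|p\|_{L_2(\tBD)}\lesssim\hD\,\|\Delta v\|_\LTD\lesssim|v|_\HOD.
\]
Since $\bar{q}_{_D}=0$ and $d=2$ makes the factor $\hD^{1-(d/2)}$ equal to $1$, the polynomial estimate \eqref{eq:DiscreteEstimate3} (valid for any fixed degree, here $k+2$, with constant depending on that degree) gives $\|q\|_\LinfD\lesssim|q|_\HOD\lesssim|v|_\HOD$.

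Finally I would exploit that $v-q$ is harmonic: since $\Delta q=-p=\Delta v$, we have $\Delta(v-q)=0$ in the distributional sense, so $v-q$ is smooth and classically harmonic in $D$ (Weyl's lemma). By Remark~\ref{rem:Continuity}, $v\in C(\bar D)$, and $q$ is a polynomial, so $v-q$ is continuous on $\bar D$; the maximum principle therefore gives $\|v-q\|_\LinfD=\|v-q\|_{L_\infty(\p D)}\le\|v\|_{L_\infty(\p D)}+\|q\|_\LinfD$. Combining with the triangle inequality and the bound on $q$,
\[
  \|v\|_\LinfD\le\|v-q\|_\LinfD+\|q\|_\LinfD\le\|v\|_{L_\infty(\p D)}+2\|q\|_\LinfD\lesssim\|v\|_{L_\infty(\p D)}+|v|_\HOD,
\]
which is the claim, with a constant depending only on $\rD$ and $k$. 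I expect the only genuine subtlety to be the particular-solution construction — specifically, noting that Lemma~\ref{lem:Polynomial} extends to Laplacians of degree $k$ (producing a degree $k+2$ antiderivative) and checking that its $H^1$ bound survives the passage from $\tBD$ back to $D$; once $v-q$ is known to be harmonic and continuous up to $\p D$, the maximum principle supplies the rest for free.
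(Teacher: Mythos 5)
Your proposal is correct and follows essentially the same route as the paper's own proof: both construct $q\in\bbP_{k+2}$ with $\Delta q=\Delta v$ via Lemma~\ref{lem:Polynomial}, normalize $q$ to have zero mean so that \eqref{eq:DiscreteEstimate3} together with Lemma~\ref{lem:BLR} and the norm equivalence on the concentric discs gives $\|q\|_{\LinfD}\lesssim|v|_{\HOD}$, and then apply the maximum principle to the harmonic function $v-q$. The only cosmetic differences are that you apply the polynomial lemma on $\tBD$ rather than $\fBD$ and spell out the Weyl's lemma/continuity justification that the paper leaves implicit.
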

\begin{proof}
 There exists $q\in \bbP_{k+2}$ such that
\begin{equation}\label{eq:MP2}
 \Delta q=\Delta v \quad\text{and}\quad \|\nabla q\|_{L_2(\fBD)}
 \lesssim \hD\|\Delta v\|_{L_2(\fBD)}
\end{equation}
 by Lemma~\ref{lem:Polynomial} (with $p=\Delta v\in\bbP_k$).
\par
 Without loss of generality we may
 assume that the mean of $q$ over $D$ is zero.  Therefore we have
\begin{equation*}%\label{eq:MP3}
 \|q\|_{\LinfD}\lesssim |q|_{\HOD}
       \lesssim |q|_{H^1(\tBD)}
       \lesssim |q|_{H^1(\fBD)}
       \lesssim \hD\|\Delta v\|_{L_2(\fBD)}
      % \lesssim \|\nabla v\|_{L_2(\fBD)}
       \lesssim |v|_{\HOD}\notag
\end{equation*}
 by \eqref{eq:discs}, \eqref{eq:DiscreteEstimate3}, Lemma~\ref{lem:BLR},
 \eqref{eq:MP2} and scaling.
\par
 It then follows from %\eqref{eq:MP3} and
 the maximum principle for the harmonic function $v-q$
 (cf. \cite{Evans:2010:PDE}) that
\begin{align*}
 \|v\|_{\LinfD}&\leq \|v-q\|_{\LinfD}+\|q\|_{\LinfD}\\
   &\leq \|v-q\|_{L_\infty(\p D)}+\|q\|_{\LinfD}
       \lesssim \|v\|_{L_\infty(\p D)}+|v|_\HOD.
\end{align*}
\end{proof}
%%%%%%%%%%%%%%%%%%%%%%%%%%%%%%%%%%%%%%%
\subsection{The Semi-norm ${\tbar\cdot\tbar_{k,D}}$}\label{subsec:Norm}
 The semi-norm $\tbar\cdot\tbar_{k,D}$ on $\HOD$ is defined by
\begin{equation}\label{eq:tbarNormDef}
  \tbar \zeta\tbar_{k,D}^2=\|\Pi_{k-2,D}^0\zeta\|_\LTD^2+
  \hD\sum_{e\in\ED}\|\Pi_{k-1,e}^0 \zeta\|_{L_2(e)}^2,
 \end{equation}
 where
 $\Pi_{k-1,e}^0$ is the orthogonal projection from $L_2(e)$ onto $\bbP_{k-1}(e)$.
\par
 It follows from \eqref{eq:Trace} and \eqref{eq:tbarNormDef} that
\begin{equation}\label{eq:tbarBdd}
  \tbar\zeta\tbar_{k,D}\leq C\big(\|\zeta\|_\LTD+\hD|\zeta|_{\HOD}\big)
  \qquad\forall\,
  \zeta\in \HOD,
\end{equation}
 where the positive constant $C$ depends only on $\rD$ and $k$.
\par
 We also have, by \eqref{eq:G2} and a standard estimate for polynomials
 in one variable,
\begin{align}\label{eq:tbarBdd2}
 \tbar v\tbar_{k,D}&\lesssim \hD\|v\|_{L_\infty(\p D)}
 +\|\Pi_{k-2,D}^0 v\|_\LTD\\
    &\lesssim  \hD\Big(\sum_{p\in\NPD} v^2(p)\Big)^\frac12
    +\|\Pi_{k-2,D}^0 v\|_\LTD
   \qquad \forall\,v\in\cQD,\notag
\end{align}
 where the hidden constant depends only on $\rD$ and $k$.
%
%%%%%%%%%%%%%%%%%%%%%%%%
\subsection{Estimates for $\POD$}\label{subsec:PODEstimates}
\par
 All the hidden constants in this subsection depend only on $\rD$ and $k$.
 Besides the obvious stability estimate
\begin{equation}\label{eq:PODStability1}
  |\POD\zeta|_{\HOD}\leq |\zeta|_{\HOD}\qquad\forall\,\zeta\in \HOD
\end{equation}
 that follows from \eqref{eq:POD1}, we also have
 a stability estimate  for $\POD$  in terms of $\|\cdot\|_\LTD$ and the semi-norm
 $\tbar\cdot\tbar_{k,D}$.
\begin{lemma}\label{lem:PODStability2}
 We have
\begin{equation*}%\label{eq:PODStability2}
 \|\POD\zeta\|_\LTD\lesssim \tbar \zeta\tbar_{k,D} \qquad\forall\,\zeta\in \HOD.
\end{equation*}
\end{lemma}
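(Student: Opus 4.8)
The plan is to set $p=\POD\zeta\in\PkD$ and to estimate $\|p\|_\LTD$ by passing first to the $L_\infty$ norm and then splitting the result into a boundary-mean contribution and an $H^1$-seminorm contribution, each of which I will control by $\tbar\zeta\tbar_{k,D}$. Since $d=2$ here, \eqref{eq:G1} gives $\|p\|_\LTD\lesssim\hD\|p\|_{\LinfD}$, and the discrete estimate \eqref{eq:DiscreteEstimate2} gives $\|p\|_{\LinfD}\lesssim|\bar p_{_{\p D}}|+|p|_\HOD$ (the factor $\hD^{1-(d/2)}$ equals $1$). Combining these, it suffices to bound $\hD|\bar p_{_{\p D}}|$ and $\hD|p|_\HOD$.

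For the mean term I would use the defining relation \eqref{eq:POD2}, which yields $\bar p_{_{\p D}}=\bar\zeta_{_{\p D}}$. Because the constant function lies in $\bbP_{k-1}(e)$, on each edge $\int_e\zeta\,ds=\int_e\Pi_{k-1,e}^0\zeta\,ds$; a Cauchy--Schwarz inequality on each edge, followed by a Cauchy--Schwarz over the edges of $\ED$ and the identity $|\p D|\approx\hD$ from \eqref{eq:G2}, then gives
\begin{equation*}
 \hD|\bar p_{_{\p D}}|\lesssim\Big(\hD\sum_{e\in\ED}\|\Pi_{k-1,e}^0\zeta\|_{L_2(e)}^2\Big)^{1/2}\leq\tbar\zeta\tbar_{k,D}.
\end{equation*}

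The crux is the seminorm term, and here I would exploit the projection's own defining identity rather than the stability bound \eqref{eq:PODStability1}, since $|p|_\HOD$ is not present in $\tbar\zeta\tbar_{k,D}$. Taking $q=p$ in the integrated-by-parts form \eqref{eq:POD1} gives $|p|_\HOD^2=\int_{\p D}\zeta(n\cdot\nabla p)\,ds-\int_D\zeta(\Delta p)\,dx$. The key observation is that $n\cdot\nabla p$ restricted to $e$ lies in $\bbP_{k-1}(e)$ and $\Delta p\in\PkmtD$, so $\zeta$ may be replaced by its projections $\Pi_{k-1,e}^0\zeta$ and $\Pi_{k-2,D}^0\zeta$ in the two integrals without changing their values. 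After Cauchy--Schwarz I would control the polynomial factors using the discrete estimates applied componentwise to $\nabla p$, namely $\|\nabla p\|_{L_2(\p D)}\lesssim\hD^{-1/2}|p|_\HOD$ from \eqref{eq:DiscreteEstimate0} and $\|\Delta p\|_\LTD\lesssim\hD^{-1}|p|_\HOD$ from \eqref{eq:DiscreteEstimate1}. This produces $|p|_\HOD^2\lesssim\hD^{-1}\tbar\zeta\tbar_{k,D}\,|p|_\HOD$, and cancelling one factor of $|p|_\HOD$ gives $\hD|p|_\HOD\lesssim\tbar\zeta\tbar_{k,D}$, which completes the bound.

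The main obstacle is this last absorption step: I must extract $|p|_\HOD$ from the defining identity of $\POD$ and then recover the full factor $|p|_\HOD$ from \emph{both} resulting terms so that it can be divided out. The reason the constants remain independent of the small edges is that the discrete trace and inverse estimates \eqref{eq:DiscreteEstimate0}--\eqref{eq:DiscreteEstimate1} depend only on $\rD$ and $k$ and never on the individual edge lengths $\hE$, while every sum over edges collapses into $|\p D|\approx\hD$.
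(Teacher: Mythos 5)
Your proof is correct and takes essentially the same route as the paper: both arguments hinge on taking $q=\POD\zeta$ in \eqref{eq:POD1}, replacing $\zeta$ by its edge and interior projections, invoking \eqref{eq:DiscreteEstimate0}--\eqref{eq:DiscreteEstimate1} componentwise on $\nabla\POD\zeta$, and absorbing to obtain $\hD|\POD\zeta|_{\HOD}\lesssim\tbar\zeta\tbar_{k,D}$, together with the same treatment of the boundary mean via \eqref{eq:POD2}. The only cosmetic difference is the final assembly: the paper applies the Poincar\'e--Friedrichs inequality \eqref{eq:PF2} directly to $\POD\zeta$, whereas you pass through $\|\POD\zeta\|_{\LinfD}$ via \eqref{eq:G1} and \eqref{eq:DiscreteEstimate2} --- and since \eqref{eq:DiscreteEstimate2} is itself derived from \eqref{eq:PF1}--\eqref{eq:PF2}, the two assemblies are equivalent.
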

\begin{proof}
   It follows from \eqref{eq:POD1} that
\begin{align*}
   &\int_D \nabla(\POD \zeta)\cdot\nabla(\POD\zeta)\,dx
   =\int_{\p D}\zeta n\cdot\nabla(\POD\zeta)ds
   -\int_D \zeta \Delta (\POD\zeta)dx\\
  &\hspace{40pt}\leq \Big(\sum_{e\in\ED}\|\Pi_{k-1,e}^0\zeta\|_{L_2(e)}^2\Big)^\frac12
    \Big(\sum_{e\in\ED}\|\nabla\POD\zeta\|_{L_2(e)}^2\Big)^\frac12\\
     &\hspace{80pt}+\|\PDZmt\zeta\|_\LTD\|\Delta(\POD\zeta)\|_\LTD,
\end{align*}
 and we have, by \eqref{eq:DiscreteEstimate0} and \eqref{eq:DiscreteEstimate1},
\begin{align*}
  \sum_{e\in\ED}\|\nabla\POD\zeta\|_{L_2(e)}^2\lesssim
         \hD^{-1}\|\nabla \POD\zeta\|_\LTD^2 \quad\text{and}\quad
  \|\Delta(\POD\zeta)\|_\LTD&\lesssim \hD^{-1}\|\nabla \POD\zeta\|_\LTD.
\end{align*}
 It follows that
\begin{equation*}
  \|\nabla\POD\zeta\|_\LTD
  \lesssim \hD^{-1}\Big(\hD\sum_{e\in\ED}\|\Pi_{k-1,e}^0\zeta\|_{L_2(e)}^2
  +\|\PDZmt\zeta\|_\LTD^2\Big)^\frac12
    =\hD^{-1}\tbar\zeta\tbar_{k,D}.
\end{equation*}
\par
 Moreover \eqref{eq:G2}, \eqref{eq:POD2} and  \eqref{eq:tbarNormDef}, imply
\begin{align*}
    \Big|\int_{\p D} \POD \zeta\,ds\Big|
  =\Big|\sum_{e\in\ED}\int_e \Pi_{0,e}^0\zeta\,ds\Big|
  &\leq
  \sum_{e\in\ED}\sqrt{\hE}\|\Pi_{k-1,e}^0\zeta\|_{L_2(e)}\\
  &\lesssim \sqrt{\hD}
    \Big(\sum_{e\in\ED}\|\Pi_{k-1,e}^0\zeta\|_{L_2(e)}^2\Big)^\frac12\leq
    \tbar\zeta\tbar_{k,D}.
\end{align*}
\par
 Finally we have, by \eqref{eq:PF2}, %and \eqref{eq:tbarNormDef},
\begin{align*}
  \|\POD\zeta\|_\LTD&\leq \Big|\int_{\p D} \POD \zeta\,ds\Big| +
      \hD \|\nabla\POD\zeta\|_\LTD \lesssim \tbar \zeta\tbar_{k,D}.
\end{align*}
\end{proof}
\par
 We can now establish error estimates for $\POD$.
\par
\begin{lemma}\label{lem:PODErrors}
 We have
\begin{alignat}{3}
   \|\zeta-\POD \zeta\|_\LTD&\lesssim \hD^{\ell+1}|\zeta|_{H^{\ell+1}(D)}
 &\qquad&\forall\,\zeta\in H^{\ell+1}(D),\,0\leq\ell\leq k,
 \label{eq:LTwoPOD}\\
   |\zeta-\POD\zeta|_{\HOD}&\lesssim \hD^{\ell}|\zeta|_{H^{\ell+1}(D)}
 &\qquad&\forall\,\zeta\in H^{\ell+1}(D),\,1\leq\ell\leq k,
 \label{eq:HOnePOD}\\
 |\zeta-\POD \zeta|_{H^2(D)}&\lesssim h_D^{\ell-1}|\zeta|_{H^{\ell+1}(D)}&\qquad&
  \forall\,\zeta\in H^{\ell+1}(D),\,1\leq\ell\leq k.\label{eq:HTwoPOD}
\end{alignat}
%
% where the hidden constants depend only on $\rD$ and $k$.
\end{lemma}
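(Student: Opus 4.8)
The plan is to reduce all three estimates to the Bramble--Hilbert approximation estimates \eqref{eq:BHEstimates} by exploiting the polynomial invariance \eqref{eq:Projection} together with the two stability bounds already established for $\POD$. Fix $\zeta\in H^{\ell+1}(D)$ and let $q\in\bbP_\ell$ be a \emph{single} (averaged Taylor) polynomial realizing the Bramble--Hilbert estimate simultaneously in the relevant seminorms, so that $|\zeta-q|_{H^m(D)}\lesssim\hD^{\ell+1-m}|\zeta|_{H^{\ell+1}(D)}$ for $0\le m\le\ell$ by \eqref{eq:BHEstimates} (the case $m=\ell+1$ being trivial since then $q$ does not contribute). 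Since $q\in\bbP_\ell\subset\PkD$, \eqref{eq:Projection} gives $\POD q=q$, whence $\zeta-\POD\zeta=(\zeta-q)-\POD(\zeta-q)$; this identity drives every estimate.

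For \eqref{eq:LTwoPOD} I would bound $\|\zeta-\POD\zeta\|_\LTD\le\|\zeta-q\|_\LTD+\|\POD(\zeta-q)\|_\LTD$ and control the second term by the $L_2$ stability of $\POD$: Lemma~\ref{lem:PODStability2} together with \eqref{eq:tbarBdd} yields $\|\POD(\zeta-q)\|_\LTD\lesssim\tbar\zeta-q\tbar_{k,D}\lesssim\|\zeta-q\|_\LTD+\hD|\zeta-q|_{\HOD}$. Inserting the Bramble--Hilbert bounds for $m=0$ and $m=1$ then gives $\hD^{\ell+1}|\zeta|_{H^{\ell+1}(D)}+\hD\cdot\hD^{\ell}|\zeta|_{H^{\ell+1}(D)}\approx\hD^{\ell+1}|\zeta|_{H^{\ell+1}(D)}$. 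The estimate \eqref{eq:HOnePOD} is even more direct: from the same identity and the $H^1$ stability \eqref{eq:PODStability1} one gets $|\zeta-\POD\zeta|_{\HOD}\le|\zeta-q|_{\HOD}+|\POD(\zeta-q)|_{\HOD}\le 2|\zeta-q|_{\HOD}\lesssim\hD^\ell|\zeta|_{H^{\ell+1}(D)}$.

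The $H^2$ estimate \eqref{eq:HTwoPOD} is the one requiring an extra ingredient, since $\POD$ carries no $H^2$ stability. Here I would split $|\zeta-\POD\zeta|_{H^2(D)}\le|\zeta-q|_{H^2(D)}+|q-\POD\zeta|_{H^2(D)}$; the first term is handled by \eqref{eq:BHEstimates} (and is simply $|\zeta|_{H^2(D)}$ when $\ell=1$, since then $q\in\bbP_1$), while for the second I would use that $q-\POD\zeta\in\PkD$ is a polynomial and apply an inverse estimate $|p|_{H^2(D)}\lesssim\hD^{-1}|p|_{\HOD}$. This inverse estimate is not stated separately, but it follows at once from \eqref{eq:DiscreteEstimate1} applied to each first-order partial derivative $\p_j p\in\bbP_{k-1}\subset\bbP_k$, since $|p|_{H^2(D)}^2=\sum_j|\p_j p|_{\HOD}^2\lesssim\hD^{-2}\sum_j\|\p_j p\|_\LTD^2=\hD^{-2}|p|_{\HOD}^2$. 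Combining this with $|q-\POD\zeta|_{\HOD}\le|\zeta-q|_{\HOD}+|\zeta-\POD\zeta|_{\HOD}\lesssim\hD^{\ell}|\zeta|_{H^{\ell+1}(D)}$ (the latter from \eqref{eq:HOnePOD}) gives $|q-\POD\zeta|_{H^2(D)}\lesssim\hD^{\ell-1}|\zeta|_{H^{\ell+1}(D)}$, which together with the Bramble--Hilbert bound for the first term yields \eqref{eq:HTwoPOD}. The only genuine obstacle is recognizing that the lack of an $L_2$- or $H^2$-orthogonality for $\POD$ forces one to route the $L_2$ estimate through the semi-norm stability of Lemma~\ref{lem:PODStability2} and the $H^2$ estimate through a polynomial inverse estimate, rather than relying on a naive orthogonal-projection argument.
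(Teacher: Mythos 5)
Your proposal is correct and follows essentially the same route as the paper: polynomial invariance \eqref{eq:Projection} plus Bramble--Hilbert, with the $H^1$ bound from the stability \eqref{eq:PODStability1}, the $L_2$ bound routed through Lemma~\ref{lem:PODStability2} and \eqref{eq:tbarBdd}, and the $H^2$ bound via the polynomial inverse estimate $|p|_{H^2(D)}\lesssim\hD^{-1}|p|_{\HOD}$ obtained by applying \eqref{eq:DiscreteEstimate1} to first-order derivatives (exactly what the paper does implicitly when it writes $|\POD\zeta|_{H^2(D)}\lesssim\hD^{-1}|\POD\zeta|_{\HOD}$). Your explicit use of a single averaged Taylor polynomial realizing all the seminorm bounds simultaneously, and your handling of the borderline case $m=\ell+1$, are careful touches that the paper leaves implicit.
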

\begin{proof}
  The estimate \eqref{eq:HOnePOD} follows immediately from \eqref{eq:BHEstimates},
  \eqref{eq:Projection} and
  \eqref{eq:PODStability1}.
\par
In view of \eqref{eq:DiscreteEstimate1} and \eqref{eq:PODStability1}, we have
\begin{equation*}
   |\POD\zeta|_{H^2(D)}\lesssim \hD^{-1}|\POD\zeta|_{\HOD}\leq\hD^{-1}|\zeta|_{\HOD},
\end{equation*}
 which together with \eqref{eq:BHEstimates} and
  \eqref{eq:Projection} implies \eqref{eq:HTwoPOD}.
\par
 Similarly we have, by \eqref{eq:tbarBdd}
  and Lemma~\ref{lem:PODStability2}
\begin{equation*}
  \|\POD\zeta\|_\LTD\lesssim
  \tbar \zeta\tbar_{k,D}
  \lesssim \|\zeta\|_\LTD+\hD|\zeta|_{\HOD},
\end{equation*}
 which together with \eqref{eq:BHEstimates} and \eqref{eq:Projection} implies
 \eqref{eq:LTwoPOD}.
\end{proof}
%
%%%%%%%%%%%%%%%%%%%%%%%%%%%%%%%%%
\subsection{Estimates for $\PDZ$}\label{subsec:PDZEstimates}
% \par
 All the hidden constants in this subsection only depend on $\rD$ and $k$.
 We have an obvious stability estimate
\begin{equation}\label{eq:PDZLTwo}
   \|\PDZ \zeta\|_\LTD\leq \|\zeta\|_\LTD \qquad\forall\,\zeta\in L_2(D)
 \end{equation}
  and an obvious relation
\begin{equation}\label{eq:PDZInvariance}
 \PDZ q=q\qquad\forall\,q\in \PkD.
\end{equation}
 \par
  It follows from \eqref{eq:BHEstimates}, \eqref{eq:PDZLTwo} and
  \eqref{eq:PDZInvariance} that
\begin{equation} \label{eq:PDZLTwoError}
  \|\zeta-\PDZ\zeta\|_\LTD\lesssim
   \hD^{\ell+1}|\zeta|_{H^{\ell+1}(D)} \qquad\forall\,\zeta\in H^{\ell+1}(D),\,
      0\leq\ell\leq k.
\end{equation}
\par
 We also have a  stability estimate for $\PDZ$ in $|\cdot|_{\HOD}$.
\begin{lemma}\label{lem:PDZHOne}
   We have
\begin{equation}\label{eq:PDZHOne}
     |\PDZ \zeta|_{\HOD}\lesssim |\zeta|_{\HOD} \qquad\forall\,\zeta\in \HOD.
\end{equation}
\end{lemma}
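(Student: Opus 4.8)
The plan is to leverage three facts already in hand: the polynomial invariance \eqref{eq:PDZInvariance}, namely $\PDZ q=q$ for $q\in\PkD$; the $L_2$-stability of $\PDZ$ in \eqref{eq:PDZLTwo}; and the inverse estimate \eqref{eq:DiscreteEstimate1}. The underlying idea is that although $\PDZ$ is defined only through $L_2$-orthogonality, its range lies in $\PkD$, where the $H^1$ seminorm is controlled by the $L_2$ norm at the cost of a factor $\hD^{-1}$. The whole point of the argument is to convert this into an $\hD$-free bound, and the mechanism for doing so is to subtract a suitable polynomial before invoking the inverse estimate.

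First I would fix an arbitrary $\zeta\in\HOD$ together with a constant $q\in\bbP_0$ to be selected later. Since $\bbP_0\subset\PkD$, the invariance \eqref{eq:PDZInvariance} gives $\PDZ\zeta-q=\PDZ(\zeta-q)$, while the fact that $q$ is constant yields $|\PDZ\zeta|_{\HOD}=|\PDZ\zeta-q|_{\HOD}$. The function $\PDZ\zeta-q$ belongs to $\PkD$, so the inverse estimate \eqref{eq:DiscreteEstimate1} produces $|\PDZ\zeta-q|_{\HOD}\lesssim\hD^{-1}\|\PDZ\zeta-q\|_\LTD$. Rewriting $\PDZ\zeta-q=\PDZ(\zeta-q)$ and applying the $L_2$-stability \eqref{eq:PDZLTwo} then bounds the right-hand side by $\hD^{-1}\|\zeta-q\|_\LTD$, reducing everything to estimating $\|\zeta-q\|_\LTD$.

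The final step is to choose $q$ as a best constant approximation to $\zeta$: the Bramble-Hilbert estimate \eqref{eq:BHEstimates} with $\ell=m=0$ furnishes a constant $q$ satisfying $\|\zeta-q\|_\LTD\lesssim\hD|\zeta|_{\HOD}$, so the accumulated factors $\hD^{-1}\cdot\hD$ cancel and the claimed inequality follows with a constant depending only on $\rD$ and $k$. The only delicate point — and the reason the estimate is not wholly automatic on meshes of this kind — is that we are permitted nothing stronger than the $H^1$ seminorm of $\zeta$ on the right-hand side. This is precisely what forces the use of a degree-zero polynomial, whose vanishing $H^1$ seminorm is exactly what allows $|\PDZ\zeta|_{\HOD}$ to be replaced by $|\PDZ\zeta-q|_{\HOD}$; a higher-degree approximant would require $H^2$ regularity of $\zeta$ and would reintroduce an uncontrolled $|q|_{\HOD}$ term.
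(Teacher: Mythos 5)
Your proof is correct, and it takes a genuinely different route from the paper's. The paper splits $\PDZ\zeta=(\PDZ\zeta-\POD\zeta)+\POD\zeta$, bounds $|\POD\zeta|_{\HOD}\leq|\zeta|_{\HOD}$ by the stability estimate \eqref{eq:PODStability1}, and handles the polynomial difference with the inverse estimate \eqref{eq:DiscreteEstimate1} together with the two $L_2$ error bounds $\|\zeta-\POD\zeta\|_\LTD\lesssim\hD|\zeta|_{\HOD}$ from \eqref{eq:LTwoPOD} and $\|\zeta-\PDZ\zeta\|_\LTD\lesssim\hD|\zeta|_{\HOD}$ from \eqref{eq:PDZLTwoError}. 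You instead bypass $\POD$ entirely: subtract the best constant approximation $q\in\bbP_0$ (so $|\PDZ\zeta|_{\HOD}=|\PDZ(\zeta-q)|_{\HOD}$ by \eqref{eq:PDZInvariance}), apply \eqref{eq:DiscreteEstimate1}, the $L_2$ stability \eqref{eq:PDZLTwo}, and Bramble--Hilbert \eqref{eq:BHEstimates} with $\ell=m=0$. What your route buys is self-containedness: \eqref{eq:LTwoPOD} rests on the $\tbar\cdot\tbar_{k,D}$ machinery of Lemma~\ref{lem:PODStability2}, whereas your argument uses only the elementary facts from Sections~\ref{subsec:BH} and \ref{subsec:DE} plus the two obvious properties of $\PDZ$; the constants still depend only on $\rD$ and $k$. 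In fact both proofs are instances of the same scheme --- subtract a polynomial whose $H^1$ seminorm you can handle, then use the inverse estimate and $L_2$ control --- the paper taking $q=\POD\zeta$ and you taking $q$ constant. For that reason your closing remark is slightly too strong: a higher-degree approximant does not make the argument fail, since $|q|_{\HOD}$ can be controlled (the paper does exactly this via \eqref{eq:PODStability1}); the constant choice simply makes that term vanish for free.
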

\begin{proof}  This is a consequence of  \eqref{eq:DiscreteEstimate1},
 \eqref{eq:PODStability1},
 \eqref{eq:LTwoPOD} and \eqref{eq:PDZLTwoError}:
\begin{align*}
|\PDZ \zeta|_{\HOD}&\leq |\PDZ \zeta-\POD\zeta|_{\HOD}+|\POD\zeta|_{\HOD}\\
    &\lesssim  \hD^{-1}\|\PDZ \zeta-\POD\zeta\|_\LTD+|\zeta|_{\HOD}\\
 &\lesssim \hD^{-1}\big(\|\PDZ \zeta-\zeta\|_\LTD+\|\zeta-\POD\zeta\|_\LTD\big)
  +|\zeta|_{\HOD}  \lesssim |\zeta|_{\HOD}.
\end{align*}
\end{proof}
\par
 We can then derive error estimates for $\POD$ by combining the
 Bramble-Hilbert estimates
 and Lemma~\ref{lem:PDZHOne}.
\begin{lemma}\label{lem:PDZErrors}
 We have
\begin{alignat}{3}
       |\zeta-\PDZ\zeta|_{\HOD}&\lesssim \hD^\ell|\zeta|_{H^{\ell+1}(D)}
  &\qquad&\forall\,\zeta\in H^{\ell+1}(D), \, 1\leq\ell\leq k, \label{eq:PDZHOneError}\\
  |\zeta-\PDZ\zeta|_\HTD&\lesssim h^{\ell-1}|\zeta|_{H^{\ell+1}(D)}
     &\qquad&\forall\,\zeta\in H^{\ell+1}(D), \, 1\leq\ell\leq k.\label{eq:PDZHTwoError}
\end{alignat}
\end{lemma}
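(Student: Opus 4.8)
The plan is to mimic the structure of the proof of Lemma~\ref{lem:PODErrors}, combining the $H^1$-stability of $\PDZ$ from Lemma~\ref{lem:PDZHOne} with the Bramble--Hilbert estimates \eqref{eq:BHEstimates} and the polynomial inverse estimate \eqref{eq:DiscreteEstimate1}. The key structural fact is the polynomial invariance \eqref{eq:PDZInvariance}: for any $q\in\PkD$ we have $\PDZ q=q$, so that $\zeta-\PDZ\zeta=(\zeta-q)-\PDZ(\zeta-q)$ for every such $q$.

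First I would establish \eqref{eq:PDZHOneError}. Using this decomposition, the triangle inequality, and the $H^1$-stability \eqref{eq:PDZHOne},
\begin{equation*}
  |\zeta-\PDZ\zeta|_{\HOD}\leq |\zeta-q|_{\HOD}+|\PDZ(\zeta-q)|_{\HOD}
  \lesssim |\zeta-q|_{\HOD}\qquad\forall\,q\in\PkD.
\end{equation*}
Taking the infimum over $q\in\PkD$ and invoking the Bramble--Hilbert estimate \eqref{eq:BHEstimates} with $m=1$ then yields $|\zeta-\PDZ\zeta|_{\HOD}\lesssim \hD^{\ell}|\zeta|_{H^{\ell+1}(D)}$.

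For \eqref{eq:PDZHTwoError} I would first record a polynomial inverse estimate in the $H^2$ semi-norm: applying \eqref{eq:DiscreteEstimate1} to each first-order partial derivative $\partial_i p\in\bbP_{k-1}\subset\bbP_k$ of $p\in\PkD$ gives $|p|_{\HTD}\lesssim \hD^{-1}|p|_{\HOD}$. Then, for any $q\in\PkD$, writing $\PDZ\zeta-q=\PDZ(\zeta-q)\in\PkD$ and using this inverse estimate followed by \eqref{eq:PDZHOne},
\begin{equation*}
  |\zeta-\PDZ\zeta|_{\HTD}\leq |\zeta-q|_{\HTD}+|\PDZ(\zeta-q)|_{\HTD}
  \lesssim |\zeta-q|_{\HTD}+\hD^{-1}|\zeta-q|_{\HOD}.
\end{equation*}
Finally I would choose a single averaged-Taylor polynomial $q$ realizing \eqref{eq:BHEstimates} simultaneously for $m=1$ and $m=2$, so that $|\zeta-q|_{\HTD}\lesssim \hD^{\ell-1}|\zeta|_{H^{\ell+1}(D)}$ and $\hD^{-1}|\zeta-q|_{\HOD}\lesssim \hD^{\ell-1}|\zeta|_{H^{\ell+1}(D)}$, which together give \eqref{eq:PDZHTwoError}.

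The only point requiring a little care---and the closest thing to an obstacle---is that the two bounds in the last display must be combined using the \emph{same} polynomial $q$; this is guaranteed because the averaged Taylor polynomial underlying \eqref{eq:BHEstimates} (cf.\ \cite[Lemma~4.3.8]{BScott:2008:FEM}) approximates $\zeta$ optimally in every $H^m(D)$ semi-norm with $m\le\ell$ at once. Everything else is a routine assembly of results already proved in Section~\ref{subsec:DE} and Section~\ref{subsec:PDZEstimates}.
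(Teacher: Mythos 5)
Your proposal is correct and follows essentially the same route as the paper: both arguments use the polynomial invariance \eqref{eq:PDZInvariance} to reduce to Bramble--Hilbert via the $H^1$-stability \eqref{eq:PDZHOne}, and both obtain \eqref{eq:PDZHTwoError} from the inverse estimate $|p|_{\HTD}\lesssim \hD^{-1}|p|_{\HOD}$ (derived from \eqref{eq:DiscreteEstimate1} applied to first-order derivatives) combined with \eqref{eq:PDZHOne}. Your explicit remark that the same averaged Taylor polynomial $q$ must realize the Bramble--Hilbert bound simultaneously for $m=1$ and $m=2$ is a point the paper leaves implicit, and it is correctly justified by the Dupont--Scott construction you cite.
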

\begin{proof} In view of \eqref{eq:PDZInvariance},
 the estimate \eqref{eq:PDZHOneError} follows from
 \eqref{eq:BHEstimates} and \eqref{eq:PDZHOne}.
\par
Similarly the estimate  \eqref{eq:PDZHTwoError} follows from \eqref{eq:BHEstimates},
 \eqref{eq:PDZInvariance} and the inequality
\begin{equation*}
 |\PDZ\zeta|_{H^2(D)}\lesssim \hD^{-1}|\PDZ\zeta|_\HOD
 \lesssim \hD^{-1}|\zeta|_\HOD
\end{equation*}
 obtained from \eqref{eq:DiscreteEstimate1} and \eqref{eq:PDZHOne}.
\end{proof}
 The following is another useful estimate.
\begin{lemma}\label{lem:PDZcQD}
  We have
\begin{equation*}
  \|\PDZ v\|_\LTD\lesssim C \tbar v\tbar_{k,D} \qquad\forall\,v\in \cQD.
\end{equation*}
\end{lemma}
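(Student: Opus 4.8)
The plan is to reduce everything to the stability estimate already proved in Lemma~\ref{lem:PODStability2}, using the defining condition (iii) of $\cQD$, namely $\PDZ v-\POD v\in\PkmtD$. I would begin with the splitting
\begin{equation*}
  \PDZ v=\POD v+\bigl(\PDZ v-\POD v\bigr),
\end{equation*}
where the second summand lies in $\PkmtD$. The first summand is immediately controlled, since $v\in\cQD\subset\HOD$ and Lemma~\ref{lem:PODStability2} gives $\|\POD v\|_\LTD\lesssim\tbar v\tbar_{k,D}$.

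It then remains to bound the low-order remainder $\PDZ v-\POD v$. The key step is to identify this remainder in terms of quantities that appear in the semi-norm. Because $\PkmtD\subset\PkD$, the projection $\PDZmt$ factors through $\PDZ$, so $\PDZmt v=\PDZmt\PDZ v$. Applying $\PDZmt$ to the splitting above and using that $\PDZmt$ acts as the identity on $\PkmtD$ (so that it fixes $\PDZ v-\POD v$), I obtain the identity
\begin{equation*}
  \PDZ v-\POD v=\PDZmt v-\PDZmt\POD v.
\end{equation*}

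From here the estimate is routine. The term $\|\PDZmt v\|_\LTD$ is precisely one of the two contributions to $\tbar v\tbar_{k,D}$ in \eqref{eq:tbarNormDef}, hence is bounded by $\tbar v\tbar_{k,D}$; and $\|\PDZmt\POD v\|_\LTD\leq\|\POD v\|_\LTD\lesssim\tbar v\tbar_{k,D}$ by the $L_2$-stability \eqref{eq:PDZLTwo} of the orthogonal projection together with Lemma~\ref{lem:PODStability2}. Combining these with the triangle inequality yields $\|\PDZ v-\POD v\|_\LTD\lesssim\tbar v\tbar_{k,D}$, and adding the bound on $\|\POD v\|_\LTD$ gives the claim.

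I do not expect a genuine obstacle here; the only point requiring care is the algebraic identity $\PDZ v-\POD v=\PDZmt v-\PDZmt\POD v$, which hinges on the nesting $\PkmtD\subset\PkD$ and on condition (iii) placing the difference in $\PkmtD$. Once that identity is in hand, the proof is a direct appeal to the semi-norm definition, projection stability, and the already-established estimate in Lemma~\ref{lem:PODStability2}.
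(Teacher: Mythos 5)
Your proof is correct and is essentially the paper's argument in rearranged form: the paper writes $\|\PDZ v\|_\LTD^2=\|\PDZmt v\|_\LTD^2+\|(\PDZ-\PDZmt)v\|_\LTD^2$ and uses condition \eqref{eq:Condition3} to replace $(\PDZ-\PDZmt)v$ by $(\PDZ-\PDZmt)\POD v$, which is algebraically the same identity as your $\PDZ v-\POD v=\PDZmt v-\PDZmt\POD v$. Both proofs then conclude from the definition \eqref{eq:tbarNormDef} of the semi-norm, the $L_2$-contraction property of $\PDZmt$, and Lemma~\ref{lem:PODStability2}.
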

\begin{proof}    Let $v\in\cQD$ be arbitrary.  It follows from
\eqref{eq:Condition3} that
\begin{align*}
   \|\PDZ v\|_\LTD^2&=\|\PDZmt v\|_\LTD^2+\|(\PDZ-\PDZmt)v\|_\LTD^2\\
     &=\|\PDZmt v\|_\LTD^2+\|(\PDZ-\PDZmt)\POD v\|_\LTD^2\\
      &\leq \|\PDZmt v\|_\LTD^2+\|\POD v\|_\LTD^2,
\end{align*}
 which together with \eqref{eq:tbarNormDef} and Lemma~\ref{lem:PODStability2}
 completes the proof.
\end{proof}
%
%%%%%%%%%%%%%%%%%%%%%%%%%%%%%%%%%%
\subsection{Inverse Estimates}\label{subsec:InverseEstimate}
 These are estimates that bound the norm $|v|_\HOD$ of a virtual element function
 $v\in\cQD$ in terms of $\|\PDZmt v\|_\LTD$ and norms that only involve
 the boundary data of $v$.
 They are crucial for the stability analysis of virtual element methods
in Section~\ref{subsec:DiscreteProblem}.
\par
 We begin with a key lemma.
\begin{lemma}\label{lem:Fundamental}
 There exists a positive constant $C$ depending only on $\rD$ and $k$, such that
\begin{equation}\label{eq:Fundamental}
  |v|_{\HOD}\leq C\big[ \hD^{-1}\tbar v\tbar_{k,D}+|v|_{H^{1/2}(\p D)}\big].
\end{equation}
\end{lemma}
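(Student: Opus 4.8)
The plan is to bound $|v|_{\HOD}$ by invoking the minimum energy principle (Lemma~\ref{lem:MEP}): since $v\in\cQD$, we have $|v|_{\HOD}\leq|\zeta|_{\HOD}$ for \emph{any} $\zeta\in\HOD$ with $\zeta=v$ on $\p D$ and $\PDZ(\zeta-v)=0$. So it suffices to construct one admissible competitor $\zeta$ whose $H^1$ seminorm is controlled by the right-hand side of \eqref{eq:Fundamental}. By the scale invariance of \eqref{eq:Fundamental} we may first reduce to $\hD=1$. I would build $\zeta=\zeta_1+\zeta_2$ in two pieces: $\zeta_1$ fixes the boundary trace and $\zeta_2$ is an interior bubble that corrects the $L_2$-projection onto $\PkD$ while leaving the boundary values untouched.

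For the boundary piece, set $\zeta_1=\TL(v\big|_{\p D})$ using the lifting operator of Section~\ref{subsec:Lifting}; then $\zeta_1=v$ on $\p D$ and $\hD^{-1}\|\zeta_1\|_\LTD+|\zeta_1|_{\HOD}\lesssim|v|_{H^{1/2}(\p D)}$. For the interior piece I would use the bump function $\phi$ from the proof of Lemma~\ref{lem:BLR}, which is supported in $\fBD$ (hence vanishes on $\p D$) with $|\phi|\lesssim1$ and $|\nabla\phi|\lesssim\hD^{-1}$. Writing $g:=\PDZ v-\PDZ\zeta_1\in\PkD$, I would seek $\zeta_2=\phi\,s$ with $s\in\PkD$ determined by $\int_D\phi\,s\,q\,dx=\int_D g\,q\,dx$ for all $q\in\PkD$, i.e.\ $\PDZ(\phi s)=g$. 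The bilinear form $(s,q)\mapsto\int_D\phi\,s\,q\,dx$ is an inner product on $\PkD$ because $\phi\geq0$ is strictly positive on a set of positive measure and \eqref{eq:Fundamental1} gives $\|s\|_\LTD^2\lesssim\int_{\fBD}\phi\,s^2\,dx$; this both guarantees the existence of $s$ and yields $\|s\|_\LTD\lesssim\|g\|_\LTD$ via $\|s\|_\LTD^2\lesssim\int_D\phi\,s^2\,dx=\int_D g\,s\,dx\leq\|g\|_\LTD\|s\|_\LTD$. The resulting $\zeta=\zeta_1+\zeta_2$ then satisfies $\zeta=v$ on $\p D$ (since $\zeta_2$ vanishes there) and $\PDZ\zeta=\PDZ\zeta_1+g=\PDZ v$, so it is admissible for Lemma~\ref{lem:MEP}.

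It remains to estimate $|\zeta_2|_{\HOD}$. Using $|\nabla(\phi s)|\leq|\phi\nabla s|+|s\nabla\phi|$, the product bounds on $\phi$, and the inverse estimate \eqref{eq:DiscreteEstimate1} for $|s|_{\HOD}\lesssim\hD^{-1}\|s\|_\LTD$, I would obtain $|\zeta_2|_{\HOD}\lesssim\hD^{-1}\|s\|_\LTD\lesssim\hD^{-1}\|g\|_\LTD$. Finally I would bound $\|g\|_\LTD\leq\|\PDZ v\|_\LTD+\|\PDZ\zeta_1\|_\LTD$, where Lemma~\ref{lem:PDZcQD} gives $\|\PDZ v\|_\LTD\lesssim\tbar v\tbar_{k,D}$ and the $L_2$-stability \eqref{eq:PDZLTwo} together with the lifting bound gives $\|\PDZ\zeta_1\|_\LTD\leq\|\zeta_1\|_\LTD\lesssim\hD|v|_{H^{1/2}(\p D)}$. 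Combining,
\[
 |v|_{\HOD}\leq|\zeta_1|_{\HOD}+|\zeta_2|_{\HOD}
 \lesssim|v|_{H^{1/2}(\p D)}+\hD^{-1}\tbar v\tbar_{k,D}+|v|_{H^{1/2}(\p D)},
\]
which is exactly \eqref{eq:Fundamental}.

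The main obstacle, and the only genuinely nonroutine step, is the construction and control of the interior corrector $\zeta_2$: one must produce a function vanishing on $\p D$ whose full $L_2$-projection onto $\PkD$ (all moments up to degree $k$, not merely up to $k-2$) hits a prescribed polynomial, while keeping its $H^1$ seminorm of the correct order $\hD^{-1}\|g\|_\LTD$. The weighted bump $\phi$ and the norm equivalence \eqref{eq:Fundamental1} are precisely what make the associated mass matrix uniformly invertible with constants depending only on $\rD$ and $k$; everything else is an assembly of the projection and lifting estimates already established.
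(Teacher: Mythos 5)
Your proposal is correct and follows essentially the same route as the paper's proof: scaling to $\hD=1$, taking the lifting $\TL(v|_{\p D})$ for the boundary data, correcting the interior moments with a term of the form $\phi\,s$ ($\phi$ the bump function from Lemma~\ref{lem:BLR}, $s\in\PkD$ made solvable by \eqref{eq:Fundamental1}), bounding $\|s\|_\LTD$ via Lemma~\ref{lem:PDZcQD} and the lifting estimate, and concluding with the minimum energy principle of Lemma~\ref{lem:MEP}. Your determination of $s$ by $\PDZ(\phi s)=\PDZ v-\PDZ\zeta_1$ is the same condition as the paper's $\int_D(\zeta-v)q\,dx=0$ for all $q\in\PkD$, so the two constructions coincide.
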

\begin{proof}  By scaling we may assume  $\hD=1$.
\par
 Let $\TL$ be the lifting operator from Section~\ref{subsec:Lifting}.
 The function $w=\TL v\in H^1(D)$ satisfies $w=v$ on $\p D$ and
\begin{equation}\label{eq:w}
   \|w\|_{H^1(D)}\lesssim |v|_{H^{1/2}(D)}.
\end{equation}
\par
 Let $\phi$ be the same (bump) function in the proof of Lemma~\ref{lem:MaximumPrinciple}
  and  $\zeta=w+p\phi$, where the polynomial $p\in \PkD$ is determined by
\begin{equation*}
   \int_D (\zeta-v)q\,dx=0 \qquad\forall\,q\in \PkD,
\end{equation*}
 or equivalently
\begin{equation}\label{eq:Fundamental2}
  \int_D  pq\phi\,dx=\int_D (v-w)q\,dx=\int_D (\PDZ v-w)q\,dx \qquad\forall\,q\in \PkD.
\end{equation}
 Then we have
\begin{equation}\label{eq:Fundamental3}
 |v|_{\HOD}\leq |\zeta|_{\HOD}
\end{equation}
 by Lemma~\ref{lem:MEP} and, in view of \eqref{eq:DiscreteEstimate1},
 \eqref{eq:BumpFunction} and \eqref{eq:w},
\begin{equation}\label{eq:Fundamental4}
 |\zeta|_{\HOD}\leq |w|_{\HOD}+|p\phi|_{\HOD}
   \lesssim |w|_{\HOD}+\|p\|_\LTD\lesssim
   |v|_{H^{1/2}(\p D)}+\|p\|_\LTD.
\end{equation}
\par
 Note that  \eqref{eq:Fundamental1} and \eqref{eq:Fundamental2} imply
\begin{equation*}
 \|p\|_\LTD\lesssim  \|\PDZ v-w\|_\LTD
\end{equation*}
 and hence
\begin{equation}\label{eq:Fundamental5}
  \|p\|_\LTD \lesssim \|\PDZ v\|_\LTD+\|w\|_\LTD\lesssim
   \tbar v\tbar_{k,D}+|v|_{H^{1/2}(\p D)}
\end{equation}
 by Lemma~\ref{lem:PDZcQD} and \eqref{eq:w}.
\par
 The estimate \eqref{eq:Fundamental} (with $\hD=1$) follows from
 \eqref{eq:Fundamental3}--\eqref{eq:Fundamental5}.
\end{proof}
\begin{lemma}\label{lem:InverseEstimate1}
 There exists a positive constant $C$, depending only $\rD$ and $k$, such that
\begin{equation}\label{eq:InverseEstimate1}
  |v|_{\HOD}\leq C \big[\hD^{-1}\tbar v\tbar_{k,D}
  +\hD^{1/2}\|\p v/\p s\|_{L_2(\p D)}\big]
  \qquad\forall\,v\in\cQD,
\end{equation}
 where $\p v/\p s$ is a tangential derivative of $v$.
\end{lemma}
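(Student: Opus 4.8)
The plan is to derive this directly from Lemma~\ref{lem:Fundamental}, which already gives the bound
\[
  |v|_{\HOD}\leq C\big[ \hD^{-1}\tbar v\tbar_{k,D}+|v|_{H^{1/2}(\p D)}\big].
\]
The only difference between this and the desired inequality \eqref{eq:InverseEstimate1} is that the boundary contribution is expressed through the fractional seminorm $|v|_{H^{1/2}(\p D)}$ rather than through the tangential derivative $\|\p v/\p s\|_{L_2(\p D)}$. So the whole task reduces to controlling $|v|_{H^{1/2}(\p D)}$ by $\hD^{1/2}\|\p v/\p s\|_{L_2(\p D)}$. First I would apply the estimate \eqref{eq:HalfAndOne} from Section~\ref{subsec:Half}, namely $|v|_{H^{1/2}(\p D)}\lesssim \hD^{1/2}|v|_{H^1(\p D)}$, and then substitute this into the bound from Lemma~\ref{lem:Fundamental}. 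Combining the two hidden constants (each depending only on $\rD$ and $k$) immediately yields the claimed inequality.

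The one point needing justification is that \eqref{eq:HalfAndOne} is applicable, i.e.\ that $v|_{\p D}\in H^1(\p D)$, and that $|v|_{H^1(\p D)}$ coincides with $\|\p v/\p s\|_{L_2(\p D)}$. This is where membership of $v$ in $\cQD$ is used: by condition~(i) in the definition of $\cQD$, the trace of $v$ on $\p D$ lies in $\PkbD$, so it is continuous across the vertices and polynomial on each edge. Hence $v|_{\p D}\in H^1(\p D)$, and since $\p D$ is a one-dimensional curve the $H^1$ seminorm on $\p D$ is precisely the $L_2$ norm of the arc-length (tangential) derivative, computed edge by edge. Thus $|v|_{H^1(\p D)}=\|\p v/\p s\|_{L_2(\p D)}$, which is exactly the quantity appearing in \eqref{eq:InverseEstimate1}.

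I do not anticipate any real obstacle here: the lemma is essentially a restatement of Lemma~\ref{lem:Fundamental} after rewriting the $H^{1/2}(\p D)$ seminorm of a piecewise-polynomial boundary trace in terms of its tangential derivative. The scaling is already consistent, since \eqref{eq:HalfAndOne} carries the correct factor $\hD^{1/2}$, so no additional scaling argument is required beyond what is built into the cited estimates.
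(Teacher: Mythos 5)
Your proposal is correct and follows exactly the paper's own argument: the paper likewise deduces \eqref{eq:InverseEstimate1} directly from Lemma~\ref{lem:Fundamental} combined with \eqref{eq:HalfAndOne}. Your added remark that the trace of $v\in\cQD$ lies in $\PkbD\subset H^1(\p D)$, so that $|v|_{H^1(\p D)}=\|\p v/\p s\|_{L_2(\p D)}$, is precisely the (implicit) justification the paper relies on.
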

\begin{proof}
 The estimate \eqref{eq:InverseEstimate1}
 follows immediately from
 \eqref{eq:HalfAndOne} and  Lemma~\ref{lem:Fundamental} .
\end{proof}
\begin{lemma}\label{lem:InverseEstimate2}
  There exists a positive constant $C$, depending only on $\rD$, $|\ED|$ and $k$, such
  that
\begin{equation}\label{eq:InverseEstimate2}
 |v|_{\HOD}\leq C \big[ \hD^{-1}\tbar v\tbar_{k,D}
  +\sqrt{\ln(1+\ \tD)}\|v\|_{L_\infty(\p D)} \big]\qquad\forall\,v\in \cQD,
\end{equation}
 where
\begin{equation}\label{eq:TauD}
 \tD= \frac{\max_{e\in\ED}\hE}{\min_{e\in\ED}\hE}.
\end{equation}
\end{lemma}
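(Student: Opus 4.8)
The plan is to combine the key inequality of Lemma~\ref{lem:Fundamental} with a logarithmic bound for the fractional seminorm $|v|_{H^{1/2}(\p D)}$ in terms of $\|v\|_{L_\infty(\p D)}$. Since Lemma~\ref{lem:Fundamental} already supplies the term $\hD^{-1}\tbar v\tbar_{k,D}$, it suffices to establish
\[
 |v|_{H^{1/2}(\p D)}\lesssim \sqrt{\ln(1+\tD)}\,\|v\|_{L_\infty(\p D)}\qquad\forall\,v\in\cQD,
\]
with a hidden constant depending only on $\rD$, $|\ED|$ and $k$; the estimate \eqref{eq:InverseEstimate2} follows at once. The crucial structural fact is that the trace $v|_{\p D}$ belongs to $\PkbD$, i.e.\ it is continuous on $\p D$ and a polynomial of degree $\leq k$ on each edge.

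First I would pass from the definition of $H^{1/2}(\p D)$ (through the Lipschitz isomorphism $\Phi$ of Section~\ref{subsec:Lipschitz} and \eqref{eq:HalfIso}) to the equivalent Gagliardo double integral with the Euclidean distance on $\p D$, namely $|v|_{H^{1/2}(\p D)}^2\approx\iint_{\p D\times\p D}|v(\sigma)-v(\sigma')|^2|\sigma-\sigma'|^{-2}\,ds_\sigma\,ds_{\sigma'}$, the constants depending only on $\rD$ because $\Phi$ is bi-Lipschitz. I would also record two geometric consequences of the star-shaped condition \eqref{eq:SA}: the chord-arc bound $|\sigma-\sigma'|\gtrsim \rD\,\mathrm{dist}_{\p D}(\sigma,\sigma')$, where $\mathrm{dist}_{\p D}$ is the geodesic distance along $\p D$, and $\hD\approx\max_{e\in\ED}\hE$ (the latter since $\hD\leq|\p D|=\sum_{e}\hE\leq|\ED|\max_e\hE$). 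I then split the double integral according to the pair of edges containing $\sigma$ and $\sigma'$.

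Three types of contributions arise. (i) Same edge: because the one–dimensional $H^{1/2}$ seminorm is scale invariant and $v|_e\in\bbP_k(e)$, equivalence of norms on $\bbP_k$ gives $|v|_{H^{1/2}(e)}\lesssim\|v\|_{L_\infty(e)}$, so this part is $\lesssim|\ED|\,\|v\|_{L_\infty(\p D)}^2$ with no logarithm. (ii) Adjacent edges $e,e'$ sharing a vertex $V$: writing $c=v(V)$ and using continuity, I bound $|v(\sigma)-v(\sigma')|^2\leq 2|v(\sigma)-c|^2+2|v(\sigma')-c|^2$ together with $|\sigma-\sigma'|\gtrsim\rD(s+t)$, where $s,t$ are the distances to $V$; since $v|_e-c\in\bbP_k$ vanishes at $V$, the inverse estimate $|v(\sigma)-c|\lesssim \hE^{-1}s\,\|v\|_{L_\infty(\p D)}$ makes $\int_0^{\hE}|v(\sigma)-c|^2 s^{-1}\,ds\lesssim\|v\|_{L_\infty(\p D)}^2$, again with no logarithm. (iii) Non-adjacent edges $e,e'$: here the geodesic distance between the edges is at least the length of an intervening edge, hence $\gtrsim\min_{e}\hE$, so $|\sigma-\sigma'|\gtrsim\rD(\min_e\hE+s+t)$ with $s,t$ the distances to the near endpoints; bounding $|v(\sigma)-v(\sigma')|\leq 2\|v\|_{L_\infty(\p D)}$ and integrating
\[
 \iint\frac{ds\,dt}{(\min_e\hE+s+t)^2}\lesssim\ln\Big(1+\frac{\hD}{\min_e\hE}\Big)\approx\ln(1+\tD)
\]
produces exactly the logarithmic factor. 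Summing over the $O(|\ED|^2)$ edge pairs gives the displayed bound, and combining it with Lemma~\ref{lem:Fundamental} yields \eqref{eq:InverseEstimate2}.

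The main obstacle is the non-adjacent interaction in (iii): this is where the logarithm is genuinely created, since two long edges separated only by a short edge force $v$ to differ by $O(\|v\|_{L_\infty(\p D)})$ across a transition of width $\sim\min_e\hE$. Controlling it requires the geometric lower bound $|\sigma-\sigma'|\gtrsim\rD\,\mathrm{dist}_{\p D}(\sigma,\sigma')$, without which the integral could diverge faster than logarithmically. Establishing this chord-arc property from \eqref{eq:SA}, and verifying that the inner cutoff is indeed $\min_e\hE$ while the outer scale is $\hD\approx\max_e\hE$, is the crux that yields $\ln(1+\tD)$ rather than a larger power of the edge-length ratio.
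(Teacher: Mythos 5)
Your proposal follows the same two-step skeleton as the paper's proof: Lemma~\ref{lem:Fundamental} supplies the term $\hD^{-1}\tbar v\tbar_{k,D}$ and reduces \eqref{eq:InverseEstimate2} to the boundary estimate $|v|_{H^{1/2}(\p D)}\lesssim\sqrt{\ln(1+\tD)}\,\|v\|_{L_\infty(\p D)}$ for $v\in\cQD$. Where you differ is that the paper obtains this estimate by simply citing \cite[Lemma~5.1]{BLR:2016:VEM}, while you prove it directly by splitting the Gagliardo double integral over pairs of edges. Your case analysis is the right one and correctly locates the source of the logarithm: same-edge terms are controlled by scale invariance of the one-dimensional $H^{1/2}$ seminorm and equivalence of norms on $\bbP_k$; adjacent-edge terms are tamed because $v-v(V)$ vanishes at the shared vertex; and pairs of edges separated by a short edge generate $\ln(1+\hD/\min_{e\in\ED}\hE)\lesssim\ln(1+\tD)$, the constant depending on $|\ED|$ through $\hD\lesssim|\ED|\max_{e\in\ED}\hE$. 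The chord--arc property you invoke does follow from \eqref{eq:SA}: in polar coordinates about the center of $\fBD$ the boundary is a Lipschitz graph with constants depending only on $\rD$, which yields $|\sigma-\sigma'|\gtrsim\rD^2\,\mathrm{dist}_{\p D}(\sigma,\sigma')$. So your argument is a self-contained reconstruction of the quoted lemma; it buys independence from the external reference at the cost of carrying out the geometric work that the citation hides.

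One step, however, is stated incorrectly and needs repair. For adjacent edges you assert $|\sigma-\sigma'|\gtrsim\rD(s+t)$, but the chord--arc inequality only gives $|\sigma-\sigma'|\gtrsim\rD^2\min\{s+t,\;|\p D|-(s+t)\}$: two boundary points can be close by going the other way around $\p D$. Concretely, if $e$ and $e'$ are two long edges meeting at $V$ and the remaining edge of a triangle is tiny, then for $\sigma,\sigma'$ near the far endpoints one has $s+t\approx2\hD$ while $|\sigma-\sigma'|$ is comparable to the tiny edge, so your pointwise bound on the integrand fails in that region. The repair uses only tools already in your proposal: split $e\times e'$ into the region $s+t\le|\p D|/2$, where your argument is valid as written, and the wrap-around region, where the shorter path from $\sigma$ to $\sigma'$ crosses at least one full intervening edge (recall $|\ED|\ge3$), so that $|\sigma-\sigma'|\gtrsim\rD^2\big(\delta+(h_e-s)+(h_{e'}-t)\big)$ with $\delta=\min_{e\in\ED}\hE$; there the crude bound $|v(\sigma)-v(\sigma')|\le2\|v\|_{L_\infty(\p D)}$ and the same integral you evaluate in your non-adjacent case produce another factor of $\ln(1+\tD)$. (The same two-paths caveat applies to your non-adjacent case, but it is harmless there because both paths cross a full edge, so the integral at worst doubles.) With this correction your proof is complete.
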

\begin{proof}
 According to \cite[Lemma~5.1]{BLR:2016:VEM}, we have
\begin{equation}\label{eq:HalfAndInfty}
 |v|_{H^{1/2}(\p D)}\leq C \sqrt{\ln(1+\tD)}\|v\|_{L_\infty(\p D)}
 \qquad\forall\,v\in\cQD,
\end{equation}
 where the positive constant $C$ only depends  on $\rD$, $|\ED|$ and $k$.
 \par
 The estimate \eqref{eq:InverseEstimate2} follows from
  Lemma~\ref{lem:Fundamental}
 and \eqref{eq:HalfAndInfty}.
\end{proof}
\par
 Combining \eqref{eq:tbarBdd2} and \eqref{eq:InverseEstimate2},
 we have the following corollary.
\begin{corollary}\label{cor:InverseEstimate3}
   There exists a positive constant $C$, depending only on
   $\rD$, $|\ED|$ and $k$, such
 that
\begin{equation*}
 |v|_\HOD\leq C\big[ \hD^{-1}\|\PDZmt v\|_\LTD+\sqrt{\factor}
\|v\|_{L_\infty(\p D)}\big]
\qquad\forall\,v\in\cQD.
\end{equation*}
\end{corollary}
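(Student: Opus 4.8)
The plan is to chain the two ingredients flagged in the sentence preceding the corollary: the inverse estimate \eqref{eq:InverseEstimate2} of Lemma~\ref{lem:InverseEstimate2} and the boundary bound \eqref{eq:tbarBdd2} on the semi-norm $\tbar\cdot\tbar_{k,D}$. Starting from Lemma~\ref{lem:InverseEstimate2}, for every $v\in\cQD$ I have
\[
 |v|_\HOD \le C\big[\hD^{-1}\tbar v\tbar_{k,D}+\sqrt{\factor}\,\|v\|_{L_\infty(\p D)}\big],
\]
so the only task is to replace $\tbar v\tbar_{k,D}$ by the quantities that appear on the right-hand side of the corollary.

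For that I would invoke the first inequality in \eqref{eq:tbarBdd2}, namely $\tbar v\tbar_{k,D}\lesssim \hD\|v\|_{L_\infty(\p D)}+\|\PDZmt v\|_\LTD$, whose hidden constant depends only on $\rD$ and $k$. Substituting this into the preceding display and cancelling the factor $\hD^{-1}\cdot\hD=1$ in front of the boundary $L_\infty$ term yields
\[
 |v|_\HOD \lesssim \hD^{-1}\|\PDZmt v\|_\LTD+\|v\|_{L_\infty(\p D)}+\sqrt{\factor}\,\|v\|_{L_\infty(\p D)}.
\]
The remaining step is to consolidate the two $\|v\|_{L_\infty(\p D)}$ contributions into a single $\sqrt{\factor}\,\|v\|_{L_\infty(\p D)}$ term.

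This last absorption is the only point requiring a word of justification, and it is elementary: since $\tD=\big(\max_{e\in\ED}\hE\big)/\big(\min_{e\in\ED}\hE\big)\ge 1$, we have $\factor=\ln(1+\tD)\ge\ln 2>0$, whence $1\le \sqrt{\factor}/\sqrt{\ln 2}$ and therefore $\|v\|_{L_\infty(\p D)}\lesssim \sqrt{\factor}\,\|v\|_{L_\infty(\p D)}$. Absorbing the bare term in this way produces the asserted estimate, with the hidden constant depending only on $\rD$, $|\ED|$ and $k$ — exactly the dependencies inherited from Lemma~\ref{lem:InverseEstimate2} and \eqref{eq:tbarBdd2}. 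There is no genuine obstacle in this argument; the mathematical content sits entirely in the two results being combined (and ultimately in the key Lemma~\ref{lem:Fundamental} and the logarithmic bound \eqref{eq:HalfAndInfty} behind \eqref{eq:InverseEstimate2}), so the proof of the corollary itself is a short bookkeeping step.
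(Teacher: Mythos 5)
Your proof is correct and follows exactly the route the paper intends: the corollary is stated there as the direct combination of \eqref{eq:tbarBdd2} and \eqref{eq:InverseEstimate2}, which is precisely your chain of substituting the bound $\tbar v\tbar_{k,D}\lesssim \hD\|v\|_{L_\infty(\p D)}+\|\PDZmt v\|_\LTD$ into Lemma~\ref{lem:InverseEstimate2} and absorbing the bare boundary term using $\ln(1+\tD)\ge\ln 2$. The constant dependencies ($\rD$, $|\ED|$, $k$) are also tracked correctly.
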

%
%%%%%%%%%%%%%%%%%%%%%%%%%%%%%%%%%%
\subsection{The Interpolation Operator}\label{subsec:Interpolation}
 For $s>1$
 the interpolation operator $I_{k,D}:H^s(D)\longrightarrow\cQD$
 is defined by the
 condition that $\zeta$ and $\ID\zeta$ share the same degrees
 of freedom, i.e.,
 $\ID\zeta$ agrees with $\zeta$ at the nodes in $\NPD$ and
\begin{equation}\label{eq:IDDef}
 \PDZmt\ID\zeta=\PDZmt\zeta.
\end{equation}
\par
  It is clear that
\begin{equation}\label{eq:IDInvariance}
  \ID q=q\qquad\forall\,q\in\PkD,
\end{equation}
 and by a standard estimate for polynomials in one variable,
\begin{equation}\label{eq:TrivialBdd}
  \|\ID \zeta\|_{L_\infty(\p D)}\leq C \max_{p\in\NPD}|\zeta(p)|
  \leq\|\zeta\|_{L_\infty(\p D)}\qquad
  \forall\,\zeta\in H^s(D) \;\text{and}\; s>1,
 \end{equation}
 where the positive constant $C$ only depends on $k$.
\par
  For the three dimensional Poisson problem, if the solution belongs to
 $H^{\ell+1}(\O)$, then its restriction to a face $F$
 of a polyhedral subdomain belongs to $H^{\ell+\frac12}(F)$.  Therefore
 below we also consider the interpolants of functions in $H^{\ell+\frac12}(D)$.
\par
 We begin with several stability estimates for the interpolation operator.
\begin{lemma}\label{lem:IDtbar}
  There exists a positive constant $C$, depending only on $\rD$ and $k$, such that
\begin{alignat}{3}
   \tbar\ID\zeta\tbar_{k,D}&\leq C\big[\|\zeta\|_\LTD+\hD|\zeta|_\HOD
   +\hD^2|\zeta|_\HTD\big]
   &\qquad&\forall\,\zeta\in H^2(D),\label{eq:IDtbar1}\\
    \tbar\ID\zeta\tbar_{k,D}&\leq C\big[\|\zeta\|_\LTD
    +\hD|\zeta|_\HOD+\hD^{3/2}|\zeta|_{H^{3/2}(D)}\big]
   &\qquad&\forall\,\zeta\in H^{3/2}(D).\label{eq:IDtbar2}
\end{alignat}
\end{lemma}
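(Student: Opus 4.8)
The plan is to expand the definition \eqref{eq:tbarNormDef} of $\tbar\ID\zeta\tbar_{k,D}$ into its interior moment part and its edge part and to bound each separately. Writing
\[
\tbar\ID\zeta\tbar_{k,D}^2=\|\PDZmt(\ID\zeta)\|_\LTD^2
 +\hD\sum_{e\in\ED}\|\Pi_{k-1,e}^0(\ID\zeta)\|_{L_2(e)}^2,
\]
the first term is handled immediately: by the defining property \eqref{eq:IDDef} of the interpolant we have $\PDZmt(\ID\zeta)=\PDZmt\zeta$, so the $L_2$-stability of $\PDZmt$ gives $\|\PDZmt(\ID\zeta)\|_\LTD\leq\|\zeta\|_\LTD$. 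Thus all the regularity-dependent content lives in the edge term.

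The heart of the argument is that edge term, and this is also where the two different hypotheses enter. On each edge $e$ the trace $(\ID\zeta)\big|_e$ is the element of $\bbP_k(e)$ interpolating $\zeta$ at the nodes of $\NPD$ on $e$. First I would discard the projection by its $L_2$-stability, then pass to $L_\infty$ on the edge via the elementary bound $\|f\|_{L_2(e)}\leq\sqrt{\hE}\,\|f\|_{L_\infty(e)}$, and finally use the interpolation stability \eqref{eq:TrivialBdd} together with $\sum_{e\in\ED}\hE=|\p D|\approx\hD$ from \eqref{eq:G2}. This chain yields
\[
\hD\sum_{e\in\ED}\|\Pi_{k-1,e}^0(\ID\zeta)\|_{L_2(e)}^2
 \leq\hD\sum_{e\in\ED}\hE\,\|(\ID\zeta)\big|_e\|_{L_\infty(e)}^2
 \lesssim\hD^2\|\ID\zeta\|_{L_\infty(\p D)}^2
 \lesssim\hD^2\|\zeta\|_{\LinfD}^2.
\]

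Combining the two bounds gives $\tbar\ID\zeta\tbar_{k,D}^2\lesssim\|\zeta\|_\LTD^2+\hD^2\|\zeta\|_{\LinfD}^2$, so it only remains to control $\hD\|\zeta\|_{\LinfD}$ by the stated right-hand sides, and this is exactly what the Sobolev inequalities of Section~\ref{subsec:Sobolev} supply. Applying \eqref{eq:Sobolev} (with $d=2$) gives $\hD\|\zeta\|_{\LinfD}\lesssim\|\zeta\|_\LTD+\hD|\zeta|_\HOD+\hD^2|\zeta|_\HTD$, which proves \eqref{eq:IDtbar1}, whereas applying \eqref{eq:Sobolev2} gives $\hD\|\zeta\|_{\LinfD}\lesssim\|\zeta\|_\LTD+\hD|\zeta|_\HOD+\hD^{3/2}|\zeta|_{H^{3/2}(D)}$, which proves \eqref{eq:IDtbar2}. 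The main obstacle is precisely this edge term: because the interpolant is defined through pointwise nodal values, one is forced to route the $L_2$ norms on the edges through $\|\zeta\|_{L_\infty(\p D)}$ and hence through a Sobolev embedding of $H^2(D)$ (or $H^{3/2}(D)$) into $C(\bar D)$; it is the need for such an embedding that dictates the two distinct regularity hypotheses and their matching right-hand sides.
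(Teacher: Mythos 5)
Your proof is correct and follows essentially the same route as the paper: both reduce $\tbar\ID\zeta\tbar_{k,D}$ to $\|\zeta\|_\LTD+\hD\|\zeta\|_\LinfD$ via \eqref{eq:IDDef}, \eqref{eq:TrivialBdd} and \eqref{eq:G2}, then invoke the Sobolev inequalities \eqref{eq:Sobolev} and \eqref{eq:Sobolev2}. The only cosmetic difference is that you re-derive the boundary estimate inline from the definition \eqref{eq:tbarNormDef} where the paper simply cites \eqref{eq:tbarBdd2}, and you bound the moment term directly by $\|\zeta\|_\LTD$ rather than routing it through $\hD\|\zeta\|_\LinfD$.
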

\begin{proof} Let $\zeta\in \HTD$ (resp., $H^{3/2}(D)$)
 be arbitrary.  From \eqref{eq:G2},
  \eqref{eq:tbarBdd2}, \eqref{eq:IDDef} and \eqref{eq:TrivialBdd}, we have
\begin{align*}%\label{eq:SpecialIDtbar}
  \tbar\ID\zeta\tbar_{k,D}&\lesssim \hD\|\ID\zeta\|_{L_\infty(\p D)}
  +\|\PDZmt\ID\zeta\|_\LTD\\
  &\lesssim \hD\|\zeta\|_{L_\infty(\p D)}+\|\PDZmt \zeta\|_\LTD\lesssim
  \hD\|\zeta\|_\LinfD,\notag
\end{align*}
 which together with \eqref{eq:Sobolev} (resp., \eqref{eq:Sobolev2})
 implies \eqref{eq:IDtbar1} (resp., \eqref{eq:IDtbar2}).
\end{proof}
\begin{lemma}\label{lem:IDFundamental1}
 We have
\begin{align}
 |\ID\zeta|_{\HOD}&\lesssim |\zeta|_{\HOD}+\hD|\zeta|_{H^2(D)}
 \label{eq:IDHOne}\\
\intertext{for all $\zeta\in\HTD$, and}
  |\ID\zeta|_{\HOD}&\lesssim
   |\zeta|_{\HOD}+\hD^{1/2}|\zeta|_{H^{3/2}(D)}\label{eq:IDHOneHalf}
\end{align}
 for all $\zeta\in H^{3/2}(D)$,
  where the hidden constants only depend on $\rD$, $|\ED|$ and $k$.
\end{lemma}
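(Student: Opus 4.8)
The plan is to estimate $\ID\zeta$ through Lemma~\ref{lem:Fundamental}, after first normalizing so that only seminorms survive on the right-hand side. Since $\bbP_1\subset\PkD$ and $\ID q=q$ for $q\in\PkD$ by \eqref{eq:IDInvariance}, I would fix $q\in\bbP_1$ to be a near-optimal linear approximation of $\zeta$ supplied by the Bramble--Hilbert estimate \eqref{eq:BHEstimates} (for \eqref{eq:IDHOne}) or \eqref{eq:BHEstimates2} (for \eqref{eq:IDHOneHalf}), and set $\eta=\zeta-q$, so that $\ID\zeta=\ID\eta+q$ and hence $|\ID\zeta|_\HOD\le|\ID\eta|_\HOD+|q|_\HOD$ with $|q|_\HOD\le|\eta|_\HOD+|\zeta|_\HOD$. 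Since $\ID\eta=\ID\zeta-q\in\cQD$, Lemma~\ref{lem:Fundamental} gives
\begin{equation*}
 |\ID\eta|_\HOD\lesssim \hD^{-1}\tbar\ID\eta\tbar_{k,D}+|\ID\eta|_{H^{1/2}(\p D)},
\end{equation*}
and the whole task reduces to bounding these two quantities by the seminorms of $\zeta$.

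For the first term I would invoke Lemma~\ref{lem:IDtbar}: estimate \eqref{eq:IDtbar1} (resp. \eqref{eq:IDtbar2}) bounds $\tbar\ID\eta\tbar_{k,D}$ by $\|\eta\|_\LTD+\hD|\eta|_\HOD+\hD^2|\eta|_\HTD$ (resp. with the $H^{3/2}(D)$ term in place of the last one). The point of subtracting $q$ is that the Bramble--Hilbert bounds for $\|\eta\|_\LTD$ and $|\eta|_\HOD$ make the whole expression of size $\hD^2|\zeta|_\HTD$ (resp. $\hD^{3/2}|\zeta|_{H^{3/2}(D)}$), so that $\hD^{-1}\tbar\ID\eta\tbar_{k,D}$ is controlled by $\hD|\zeta|_\HTD$ (resp. $\hD^{1/2}|\zeta|_{H^{3/2}(D)}$); the dangerous $\hD^{-1}\|\zeta\|_\LTD$ contribution that would appear without the normalization has been removed. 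For the boundary term I would first apply \eqref{eq:HalfAndOne} to pass to the boundary $H^1$ seminorm, $|\ID\eta|_{H^{1/2}(\p D)}\lesssim\hD^{1/2}|\ID\eta|_{H^1(\p D)}$, then use that on each edge $\ID\eta|_e$ is the one-dimensional degree-$k$ nodal interpolant of $\eta|_e$ and that such interpolation is stable in the $H^1(e)$ seminorm (a scaling argument and the equivalence of norms on $\bbP_k$, using that constants are reproduced), so that $|\ID\eta|_{H^1(\p D)}\lesssim|\eta|_{H^1(\p D)}$. Summing the trace inequality of Lemma~\ref{lem:CZ1} over the edges of $D$ then yields
\begin{equation*}
 \hD^{1/2}|\eta|_{H^1(\p D)}\lesssim|\eta|_\HOD+\hD^{1/2}|\eta|_{H^{3/2}(D)},
\end{equation*}
with a constant depending also on $|\ED|$.

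It remains to assemble the pieces. Using the Bramble--Hilbert bound for $|\eta|_\HOD$ and the fact that the $H^{3/2}(D)$ seminorm annihilates the linear $q$, so that $|\eta|_{H^{3/2}(D)}=|\zeta|_{H^{3/2}(D)}$, the estimate \eqref{eq:IDHOneHalf} follows at once. For \eqref{eq:IDHOne} the only extra ingredient is the scaled Sobolev interpolation embedding $\hD^{1/2}|\zeta|_{H^{3/2}(D)}\lesssim|\zeta|_\HOD+\hD|\zeta|_\HTD$ (coming from $H^1\hookrightarrow H^{1/2}$ applied to $\nabla\zeta$, or from the Calderon--Zygmund extension used in Lemma~\ref{lem:CZ1} together with interpolation of Sobolev spaces), which converts the surviving $\hD^{1/2}|\zeta|_{H^{3/2}(D)}$ term into the desired $|\zeta|_\HOD+\hD|\zeta|_\HTD$. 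I expect the boundary term $|\ID\eta|_{H^{1/2}(\p D)}$ to be the main obstacle: the entire difficulty of small edges is concentrated there, and the decisive observation is that reducing it via \eqref{eq:HalfAndOne} to an edgewise $H^1(e)$ quantity lets one deploy the scale-invariant one-dimensional interpolation stability together with Lemma~\ref{lem:CZ1}, both of which have constants insensitive to the edge lengths, at the harmless cost of a dependence on $|\ED|$.
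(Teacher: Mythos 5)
Your proposal is correct and follows essentially the paper's own route: the paper likewise normalizes by a polynomial that $\ID$ reproduces (the mean $\bar\zeta_{\ssD}$ rather than a Bramble--Hilbert linear), invokes the inverse estimate of Lemma~\ref{lem:InverseEstimate1} (which is exactly Lemma~\ref{lem:Fundamental} combined with \eqref{eq:HalfAndOne}), controls the $\tbar\cdot\tbar_{k,D}$ term through Lemma~\ref{lem:IDtbar} and a Poincar\'e--Friedrichs inequality, and controls the boundary term by one-dimensional interpolation stability on each edge followed by a trace estimate. The only noteworthy deviation is that for \eqref{eq:IDHOne} you route the boundary term through Lemma~\ref{lem:CZ1} and then need the scaled interpolation inequality $\hD^{1/2}|\zeta|_{H^{3/2}(D)}\lesssim|\zeta|_\HOD+\hD|\zeta|_\HTD$ (true, and obtainable from the extension-plus-Sobolev-interpolation machinery in the proof of Lemma~\ref{lem:CZ1}, but nowhere stated in the paper), whereas the paper avoids this entirely by applying \eqref{eq:Trace} to the first-order derivatives of $\zeta$.
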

\begin{proof}  Let $\zeta\in H^2(D)$ be arbitrary and $\bar\zeta_D$
be the mean of $\zeta$ over $D$.
 Since $\ID\bar\zeta_D=\bar\zeta_D$,
  it follows from \eqref{eq:InverseEstimate1} that
\begin{align*}
  |\ID\zeta|_{\HOD}^2&=|\ID(\zeta-\bar\zeta_D)|_\HOD^2\\
  &\lesssim \hD^{-2}\tbar\ID(\zeta-\bar\zeta_D)\tbar_{k,D}^2
  +\hD\|\p (\ID\zeta)/\p s\|_{L_2(\p D)}^2.
\end{align*}
 We have, by a standard interpolation  estimate
 in one dimension and \eqref{eq:Trace} (applied to the first order
 derivatives of $\zeta$),
\begin{align*}
 \hD\|\p(\ID \zeta)/\p s\|_{L_2(\p D)}^2&\lesssim\sum_{e\in\ED}
 \hD\|\p\zeta/\p s\|_{L_2(e)}^2\\
 &\lesssim \sum_{e\in\ED}\big[|\zeta|_\HOD^2+
 \hD^2|\zeta|_\HTD^2\big]\lesssim
 |\zeta|_\HOD^2+\hD^2|\zeta|_\HTD^2.
\end{align*}
\par
 These two estimates together with \eqref{eq:PF2} and \eqref{eq:IDtbar1}
 imply \eqref{eq:IDHOne}.
\par
 Similarly we obtain \eqref{eq:IDHOneHalf} by replacing \eqref{eq:Trace} with
 the estimate in Lemma~\ref{lem:CZ1}.
\end{proof}
\begin{lemma}\label{lem:IDFundamental2}
 We have
\begin{align}
 \|\ID\zeta\|_\LTD&\lesssim  \|\zeta\|_\LTD
 +\hD|\zeta|_{\HOD}+\hD^2|\zeta|_{H^2(D)}
 \label{eq:IDLTwo}\\
\intertext{for all $\zeta\in \HTD$, and}
 \|\ID\zeta\|_\LTD&\lesssim
  \|\zeta\|_\LTD+\hD|\zeta|_{\HOD}+\hD^{3/2}|\zeta|_{H^{3/2}(D)}
  \label{eq:IDLTwoHalf}
\end{align}
 for all $\zeta\in H^{3/2}(D)$,  where the hidden constants only
 depend on $\rD$, $|\ED|$ and $k$.
\end{lemma}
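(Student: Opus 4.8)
The plan is to bound $\|\ID\zeta\|_\LTD$ by decomposing $\ID\zeta$ into a polynomial piece and a remainder, mirroring the strategy already used for $\POD$ in Lemma~\ref{lem:PODStability2} via the seminorm $\tbar\cdot\tbar_{k,D}$. The natural bridge is the Poincar\'e--Friedrichs inequality \eqref{eq:PF2}, which controls $\|\cdot\|_\LTD$ in terms of a boundary average plus the $H^1$ seminorm. Since $|\ID\zeta|_\HOD$ has already been estimated in Lemma~\ref{lem:IDFundamental1}, the remaining task is to control the boundary-integral term $\int_{\p D}\ID\zeta\,ds$.

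First I would apply \eqref{eq:PF2} to $v=\ID\zeta\in\cQD\subset\HOD$ to obtain
\begin{equation*}
  \|\ID\zeta\|_\LTD\lesssim \hD^{-(d-1)+d/2}\Big|\int_{\p D}\ID\zeta\,ds\Big|
  +\hD|\ID\zeta|_\HOD,
\end{equation*}
specialized to $d=2$. The second term is handled directly by \eqref{eq:IDHOne} (resp.\ \eqref{eq:IDHOneHalf}), contributing exactly the $\hD|\zeta|_\HOD+\hD^2|\zeta|_\HTD$ (resp.\ $\hD|\zeta|_\HOD+\hD^{3/2}|\zeta|_{H^{3/2}(D)}$) terms appearing on the right-hand side. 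For the boundary term, since $\ID\zeta$ agrees with $\zeta$ at the nodes in $\NPD$ and its trace is the one-dimensional interpolant of $\zeta|_{\p D}$ on each edge, I would bound $|\int_{\p D}\ID\zeta\,ds|$ using \eqref{eq:G2} together with $\|\ID\zeta\|_{L_\infty(\p D)}\lesssim\|\zeta\|_{L_\infty(\p D)}$ from \eqref{eq:TrivialBdd}, giving $|\int_{\p D}\ID\zeta\,ds|\lesssim \hD\|\zeta\|_{L_\infty(\p D)}$.

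The final ingredient is to convert the $L_\infty(\p D)$ bound back into Sobolev norms over $D$ via the Sobolev embedding \eqref{eq:Sobolev} (resp.\ \eqref{eq:Sobolev2}), which yields $\|\zeta\|_{L_\infty(\p D)}\leq\|\zeta\|_\LinfD\lesssim \hD^{-1}\|\zeta\|_\LTD+|\zeta|_\HOD+\hD|\zeta|_\HTD$ (resp.\ the $H^{3/2}$ analogue). Assembling the pieces and multiplying by the scaling factors reproduces the stated right-hand sides. The main obstacle I anticipate is bookkeeping the powers of $\hD$ correctly across the three embeddings, since the boundary-average term carries a net factor $\hD^{1/2}\cdot\hD^{1/2}\cdot\hD=\hD^2$ relative to $\|\zeta\|_\LinfD$ in $d=2$; but this matches the leading $\|\zeta\|_\LTD$ term once \eqref{eq:Sobolev} is substituted, so no genuine difficulty arises. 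The $H^{3/2}$ case is identical save for substituting \eqref{eq:Sobolev2} and \eqref{eq:IDHOneHalf} in place of their integer-order counterparts, which is why the proof can simply state that it proceeds ``similarly.''
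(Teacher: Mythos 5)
Your proof is correct, but it follows a genuinely different route from the paper's. The paper argues through the projection: it writes $\|\ID\zeta\|_\LTD\leq\|\ID\zeta-\POD\ID\zeta\|_\LTD+\|\POD\ID\zeta\|_\LTD$, bounds the first term by $\hD|\ID\zeta|_\HOD$ via the $\ell=0$ case of \eqref{eq:LTwoPOD}, bounds the second by $\tbar\ID\zeta\tbar_{k,D}$ via Lemma~\ref{lem:PODStability2}, and then invokes Lemma~\ref{lem:IDtbar} and Lemma~\ref{lem:IDFundamental1}. You bypass $\POD$ and the seminorm $\tbar\cdot\tbar_{k,D}$ entirely: the Poincar\'e--Friedrichs inequality \eqref{eq:PF2} reduces $\|\ID\zeta\|_\LTD$ to the boundary average plus $\hD|\ID\zeta|_\HOD$; the boundary average is controlled by $|\p D|\approx\hD$ from \eqref{eq:G2}, the $L_\infty$ trace stability \eqref{eq:TrivialBdd}, and the Sobolev inequalities \eqref{eq:Sobolev}/\eqref{eq:Sobolev2}; and the $H^1$ term is again handled by Lemma~\ref{lem:IDFundamental1}. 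The two arguments rest on the same primitives---indeed \eqref{eq:TrivialBdd} and \eqref{eq:Sobolev}/\eqref{eq:Sobolev2} are precisely what the paper uses inside the proof of Lemma~\ref{lem:IDtbar}---so yours is a flattened, more self-contained variant. What the paper's formulation buys is reusability: the chain \eqref{eq:LTwoID} through $\POD$ and $\tbar\cdot\tbar_{k,D}$ is repeated verbatim in three dimensions (cf.\ \eqref{eq:3DIDLTwo}), whereas your use of \eqref{eq:TrivialBdd} is intrinsically two dimensional (in 3D the trace on a face is a virtual element function, and one needs the maximum principle of Lemma~\ref{lem:MaximumPrinciple} instead). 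One small slip in your bookkeeping remark: relative to $\|\zeta\|_\LinfD$, the boundary-average term carries a net factor $\hD$ (namely $|\p D|\approx\hD$, since the $\hD^{-(d-1)}$ weight in \eqref{eq:PF2} and the multiplication by $\hD^{d/2}$ cancel for $d=2$), not $\hD^2$; with this correction the substitution of \eqref{eq:Sobolev} gives exactly $\|\zeta\|_\LTD+\hD|\zeta|_\HOD+\hD^2|\zeta|_\HTD$, as you conclude, so the argument itself is unaffected.
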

\begin{proof}
  From Lemma~\ref{lem:PODStability2} and \eqref{eq:LTwoPOD} we have
\begin{align}\label{eq:LTwoID}
 \|\ID\zeta\|_\LTD&\lesssim \|\ID\zeta-\POD\ID\zeta\|_\LTD
 +\|\POD\ID\zeta\|_\LTD\\
 &\lesssim \hD|\ID\zeta|_\HOD+\tbar\ID\zeta\tbar_{k,D}.\notag
\end{align}
 The estimates \eqref{eq:IDLTwo} and \eqref{eq:IDLTwoHalf} follow from
 \eqref{eq:LTwoID}, Lemma~\ref{lem:IDtbar}
 and Lemma~\ref{lem:IDFundamental1}.
\end{proof}
\par
 We can now derive error estimates for the interpolation operator.
\begin{lemma}\label{lem:InterpolationError}
 We have, for $1\leq \ell\leq k$,
\begin{alignat}{3}
 \|\zeta-\ID\zeta\|_\LTD+\|\zeta-\POD\ID\zeta\|_\LTD
 &\lesssim \hD^{\ell+1}|\zeta|_{H^{\ell+1}(D)}&\qquad&
 \forall\,\zeta\in H^{\ell+1}(D),
 \label{eq:LTwoPODID}\\
 |\zeta-\ID\zeta|_{\HOD}+|\zeta-\POD\ID\zeta|_{\HOD}
 &\lesssim \hD^{\ell}|\zeta|_{H^{\ell+1}(D)}&\qquad&
 \forall\,\zeta\in H^{\ell+1}(D),
 \label{eq:HOnePODID}\\
 |\zeta-\POD\ID\zeta|_\HTD&\lesssim \hD^{\ell-1}|\zeta|_{H^{\ell+1}(D)}
 &\qquad&\forall\,\zeta\in H^{\ell+1}(D),
  \label{eq:HTwoPODID}
\end{alignat}
 and
\begin{alignat}{3}
 \|\zeta-\ID \zeta\|_\LTD&\lesssim
  \hD^{\ell+\frac12}|\zeta|_{H^{\ell+\frac12}(D)}
  &\qquad&\forall\,\zeta\in H^{\ell+\frac12}(D),
  \label{eq:HalfIDLTwoError}\\
   |\zeta-\ID \zeta|_\HOD&\lesssim
  \hD^{\ell-\frac12}|\zeta|_{H^{\ell+\frac12}(D)}
  &\qquad&\forall\,\zeta\in H^{\ell+\frac12}(D),
  \label{eq:HalfIDHOneError}
\end{alignat}
 where the hidden constants  only depend on $\rD$, $|\ED|$ and $k$.
\end{lemma}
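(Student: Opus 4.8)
The plan is to derive each error estimate by combining the corresponding stability estimate for the interpolation operator (Lemmas~\ref{lem:IDFundamental1} and~\ref{lem:IDFundamental2}) with the Bramble–Hilbert machinery, exploiting the polynomial invariance \eqref{eq:IDInvariance}. The standard device is as follows: for the integer-order estimates, fix $\zeta\in H^{\ell+1}(D)$ and let $q\in\bbP_\ell$ be a best polynomial approximation in the sense of \eqref{eq:BHEstimates}. Since $\ID q=q$ and $\POD q=q$ by \eqref{eq:IDInvariance} and \eqref{eq:Projection}, I would write $\zeta-\ID\zeta=(\zeta-q)-\ID(\zeta-q)$ and apply the triangle inequality, so that each term is controlled by the stability estimates applied to the residual $\zeta-q$.

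For the $\LTD$ bound in \eqref{eq:LTwoPODID}, first note that $\|\zeta-\ID\zeta\|_\LTD\leq\|\zeta-q\|_\LTD+\|\ID(\zeta-q)\|_\LTD$; apply \eqref{eq:IDLTwo} to $\zeta-q$ and then \eqref{eq:BHEstimates} to the seminorms of $\zeta-q$ of orders $0,1,2$, all of which reduce to $\hD^{\ell+1}|\zeta|_{H^{\ell+1}(D)}$. The companion term $\|\zeta-\POD\ID\zeta\|_\LTD$ I would split as $\|\zeta-\ID\zeta\|_\LTD+\|\ID\zeta-\POD\ID\zeta\|_\LTD$, where the second piece is handled by \eqref{eq:LTwoPOD} (with $\ell=0$) applied to $\ID\zeta$, giving $\hD|\ID\zeta-\zeta|_\HOD+\hD|\zeta-\POD\zeta|_\HOD$ type terms, or more directly by noting $\ID\zeta-\POD\ID\zeta=(\ID\zeta-\zeta)-\POD(\ID\zeta-\zeta)+(\zeta-\POD\zeta)$ and invoking \eqref{eq:LTwoPOD} and the $\LTD$ error just established. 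The $\HOD$ estimate \eqref{eq:HOnePODID} follows the same pattern using \eqref{eq:IDHOne}, \eqref{eq:HOnePOD}, \eqref{eq:PODStability1}, and \eqref{eq:BHEstimates}. For the $\HTD$ estimate \eqref{eq:HTwoPODID}, since $\POD\ID\zeta\in\PkD$ I would use the inverse inequality \eqref{eq:DiscreteEstimate1} to reduce $|\POD\ID\zeta-q|_\HTD$ (with $q\in\bbP_\ell$, so $|q|_\HTD$ is harmless for $\ell\leq k$ only up to the top term) to $\hD^{-1}|\POD\ID\zeta-q|_\HOD$, then apply \eqref{eq:HOnePODID}; equivalently, bound $|\zeta-\POD\ID\zeta|_\HTD$ through $|\zeta-q|_\HTD+\hD^{-1}|q-\POD\ID\zeta|_\HOD$ via the Bramble–Hilbert estimate \eqref{eq:BHEstimates} with $m=2$.

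The half-integer estimates \eqref{eq:HalfIDLTwoError} and \eqref{eq:HalfIDHOneError} are proved identically, replacing the integer-order stability bounds by their half-integer counterparts \eqref{eq:IDLTwoHalf} and \eqref{eq:IDHOneHalf}, and replacing \eqref{eq:BHEstimates} by the fractional Bramble–Hilbert estimate \eqref{eq:BHEstimates2}; the top-order residual seminorm $|\zeta-q|_{H^{\ell+1/2}(D)}=|\zeta|_{H^{\ell+1/2}(D)}$ yields the stated powers $\hD^{\ell+1/2}$ and $\hD^{\ell-1/2}$. The one point requiring slight care—and the main obstacle if any—is the bookkeeping for the $\POD\ID$ composite terms, since $\POD$ is defined on all of $\HOD$ but the useful $\LTD$ stability comes only through the seminorm $\tbar\cdot\tbar_{k,D}$ (Lemma~\ref{lem:PODStability2}); I would therefore route those estimates either through the already-established interpolation error in $\HOD$ combined with the direct $\POD$ error estimates of Lemma~\ref{lem:PODErrors}, or through \eqref{eq:tbarBdd} applied to $\ID\zeta-\zeta$, making sure the power of $\hD$ matches in each case. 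All hidden constants inherit their dependence on $\rD$, $|\ED|$, and $k$ from the invoked lemmas.
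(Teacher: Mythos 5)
Your proposal is correct and takes essentially the same route as the paper: exploit the polynomial invariances \eqref{eq:Projection} and \eqref{eq:IDInvariance}, apply the stability estimates of Lemmas~\ref{lem:IDtbar}, \ref{lem:IDFundamental1} and \ref{lem:IDFundamental2} (together with Lemma~\ref{lem:PODStability2} and \eqref{eq:PODStability1}) to the Bramble--Hilbert residual $\zeta-q$, and conclude with \eqref{eq:BHEstimates} or \eqref{eq:BHEstimates2}. Your small deviations---e.g., proving \eqref{eq:HTwoPODID} by subtracting the Bramble--Hilbert polynomial and using the inverse estimate \eqref{eq:DiscreteEstimate1} together with the already-established \eqref{eq:HOnePODID}, where the paper instead subtracts $\POOD\zeta$ and uses stability before invoking Bramble--Hilbert---are equally valid and change nothing essential.
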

\begin{proof}
   In view of  \eqref{eq:tbarBdd2},
   Lemma~\ref{lem:PODStability2}, \eqref{eq:IDtbar1} and \eqref{eq:IDLTwo},
   we have,
   for any $\zeta\in H^2(D)$,
\begin{align*}
   \|\ID\zeta\|_\LTD+\|\POD \ID\zeta\|_\LTD
   &\lesssim \|\ID\zeta\|_\LTD+\tbar\ID\zeta\tbar_{k,D}\\
     &\lesssim \|\zeta\|_\LTD+\hD|\zeta|_{\HOD}+\hD^2|\zeta|_{H^2(D)},
\end{align*}
 which together with \eqref{eq:BHEstimates},
 \eqref{eq:Projection} and \eqref{eq:IDInvariance} implies
 \eqref{eq:LTwoPODID}.
\par
 From \eqref{eq:PODStability1} and \eqref{eq:IDHOne} we also have,
 for any $\zeta\in H^2(D)$,
\begin{align*}
 & |\ID\zeta|_{H^1(\O)}+|\POD\ID\zeta|_{\HOD}\leq 2|\ID\zeta|_{\HOD}
  \lesssim \hD^{-1}\|\zeta\|_\LTD+|\zeta|_{\HOD}+\hD|\zeta|_{H^2(D)},
\end{align*}
 which together with \eqref{eq:BHEstimates},
 \eqref{eq:Projection} and \eqref{eq:IDInvariance} implies
 \eqref{eq:HOnePODID}.
\par
 Similarly, by using the relation
\begin{align*}
 |\POD\ID\zeta|_\HTD&=|\POD\ID\zeta-\POOD\zeta|_\HTD\\
   &\lesssim \hD^{-1}|\POD\ID\zeta-\POOD\zeta|_\HOD\\
   &\lesssim \hD^{-1}|\ID\zeta|_\HOD+\hD^{-1}|\zeta|_\HOD
   \lesssim \hD^{-2}\|\zeta\|_\LTD+\hD^{-1}|\zeta|_\HOD+|\zeta|_\HTD
\end{align*}
 that follows from \eqref{eq:DiscreteEstimate1},
 \eqref{eq:PODStability1} and
 \eqref{eq:IDHOne}, we can establish \eqref{eq:HTwoPODID}
 through \eqref{eq:BHEstimates}, \eqref{eq:Projection}
 and \eqref{eq:IDInvariance}.
\par
 Finally we obtain \eqref{eq:HalfIDLTwoError} and
 \eqref{eq:HalfIDHOneError}
 by replacing \eqref{eq:BHEstimates}
 (resp., \eqref{eq:IDtbar1}, \eqref{eq:IDHOne} and  \eqref{eq:IDLTwo})
 with \eqref{eq:BHEstimates2}
  (resp., \eqref{eq:IDtbar2}, \eqref{eq:IDHOneHalf} and
  \eqref{eq:IDLTwoHalf})
  in the arguments for \eqref{eq:LTwoPODID} and \eqref{eq:HOnePODID}.
\end{proof}
\par
 The proof for the following result is similar.
\begin{lemma}\label{lem:LTwoIDSpecial}
  There exists a positive constant $C$, depending only on $\rD$, $|\ED|$
  and $k$, such that
 \begin{equation*}%\label{eq:LTwoID}
   \|\ID \zeta-\Pi_{1,D}^0\ID \zeta\|_\LTD\leq C \hD^2|\zeta|_{H^2(D)}
   \qquad\forall\,\zeta\in H^2(D).
 \end{equation*}
\end{lemma}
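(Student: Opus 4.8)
The plan is to exploit two reproduction properties: $\Pi_{1,D}^0$ reproduces linear polynomials and, by \eqref{eq:IDInvariance}, so does $\ID$ (since $\bbP_1\subset\PkD$). As $\Pi_{1,D}^0\ID\zeta$ is the best $L_2(D)$ approximation of $\ID\zeta$ from $\bbP_1$, for \emph{any} $q\in\bbP_1$ we have
\begin{equation*}
  \|\ID\zeta-\Pi_{1,D}^0\ID\zeta\|_\LTD\leq \|\ID\zeta-q\|_\LTD=\|\ID(\zeta-q)\|_\LTD,
\end{equation*}
where the last equality uses $\ID q=q$. Thus the problem reduces to bounding the $L_2(D)$ norm of $\ID$ applied to $\zeta-q$ for a well-chosen linear polynomial $q$, which is precisely the mechanism behind Lemma~\ref{lem:InterpolationError}.

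First I would invoke the $L_2(D)$ stability estimate \eqref{eq:IDLTwo} from Lemma~\ref{lem:IDFundamental2} with $\zeta-q$ in place of $\zeta$, giving
\begin{equation*}
  \|\ID(\zeta-q)\|_\LTD\lesssim \|\zeta-q\|_\LTD+\hD|\zeta-q|_\HOD+\hD^2|\zeta-q|_{H^2(D)}.
\end{equation*}
Because $q$ is linear, $|\zeta-q|_{H^2(D)}=|\zeta|_{H^2(D)}$, so the last term is already of the desired order $\hD^2|\zeta|_{H^2(D)}$. It remains to choose $q$ so that the first two terms are also $\lesssim\hD^2|\zeta|_{H^2(D)}$.

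For this I would take $q$ to be a single Bramble--Hilbert (averaged Taylor) polynomial of degree one, which simultaneously satisfies $\|\zeta-q\|_\LTD\lesssim\hD^2|\zeta|_{H^2(D)}$ and $|\zeta-q|_\HOD\lesssim\hD|\zeta|_{H^2(D)}$; these are the $\ell=1$, $m=0,1$ instances of \eqref{eq:BHEstimates} realized by one and the same polynomial. Substituting them yields $\|\ID(\zeta-q)\|_\LTD\lesssim\hD^2|\zeta|_{H^2(D)}$, and the claim follows. The only point requiring care is that the \emph{same} $q$ must control both the $L_2$ and the $H^1$ errors at once, so that the two contributions in the stability estimate combine into a single $\hD^2|\zeta|_{H^2(D)}$ bound; this is guaranteed by the standard construction of the averaged Taylor polynomial rather than by applying \eqref{eq:BHEstimates} separately with possibly different minimizers.
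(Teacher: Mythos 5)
Your proof is correct and takes essentially the same route as the paper's, which simply states that the proof is ``similar'' to that of Lemma~\ref{lem:InterpolationError}: reduce to $\|\ID(\zeta-q)\|_\LTD$ via polynomial invariance \eqref{eq:IDInvariance} (you use the best-approximation property of $\Pi_{1,D}^0$, the paper's version uses projection invariance plus stability, which is equivalent), then apply the $L_2$ stability estimate \eqref{eq:IDLTwo} and a degree-one Bramble--Hilbert polynomial. Your point that one and the same averaged Taylor polynomial must control the $L_2$, $H^1$ and $H^2$ terms simultaneously is exactly how \eqref{eq:BHEstimates} is used throughout the paper, as guaranteed by the cited Dupont--Scott construction.
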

\par
 We also have interpolation error estimates in the $L_\infty$ norm.
\begin{lemma}\label{lem:1DInterpolationError}
   There exists a positive constant $C$, depending only on $\rD$, $N$ and $k$,
   such that
\begin{alignat}{3}
  \|\zeta-\ID\zeta\|_{L_\infty(D)}&\leq
  C \hD^\ell|\zeta|_{H^{\ell+1}(D)}&\qquad&
  \forall\,\zeta\in H^{\ell+1}(D) \;\text{and}\;
  1\leq\ell\leq k,\label{eq:1DInterpolationError}\\
    \|\zeta-\ID\zeta\|_{L_\infty(D)}&\leq C\hD^{\ell-\frac12}|\zeta|_{H^{\ell+\frac12}(D)}
    &\qquad&
  \forall\,\zeta\in H^{\ell+\frac12}(D) \;\text{and}\;1\leq\ell\leq k.
  \label{eq:1DInterpolationErrorHalf}
\end{alignat}
\end{lemma}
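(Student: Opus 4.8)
The plan is to reduce both estimates to the polynomial approximation results already established, exploiting the invariance $\ID q=q$ from \eqref{eq:IDInvariance} together with the maximum principle of Lemma~\ref{lem:MaximumPrinciple}. Since $\zeta\in H^{\ell+1}(D)$ (resp.\ $H^{\ell+\frac12}(D)$) with $\ell\geq1$ embeds into $C(\bar D)$ for $d=2$, I would first fix a single polynomial $q\in\bbP_\ell\subset\PkD$ --- for concreteness an averaged Taylor polynomial --- so that the Bramble--Hilbert estimates \eqref{eq:BHEstimates} (resp.\ \eqref{eq:BHEstimates2}) hold simultaneously for $q$ in the $L_2$, $H^1$ and $H^2$ (resp.\ $H^{3/2}$) seminorms with the appropriate powers of $\hD$. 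Writing $\zeta-\ID\zeta=(\zeta-q)-\ID(\zeta-q)$, it then suffices to bound the two pieces $\|\zeta-q\|_\LinfD$ and $\|\ID(\zeta-q)\|_\LinfD$ separately.

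For the first piece I would invoke the Sobolev inequality \eqref{eq:Sobolev} in the case $d=2$, namely $\|\zeta-q\|_\LinfD\lesssim\hD^{-1}\|\zeta-q\|_\LTD+|\zeta-q|_\HOD+\hD|\zeta-q|_{H^2(D)}$, and feed in the three Bramble--Hilbert bounds $\hD^{\ell+1}$, $\hD^{\ell}$ and $\hD^{\ell-1}$ times $|\zeta|_{H^{\ell+1}(D)}$; each of the three resulting terms collapses to $\hD^\ell|\zeta|_{H^{\ell+1}(D)}$. For the second piece, since $\zeta-q\in H^s(D)$ with $s>1$, the interpolant $\ID(\zeta-q)$ lies in $\cQD$, so the maximum principle of Lemma~\ref{lem:MaximumPrinciple} gives $\|\ID(\zeta-q)\|_\LinfD\lesssim\|\ID(\zeta-q)\|_{L_\infty(\p D)}+|\ID(\zeta-q)|_\HOD$. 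The boundary term is controlled by $\|\zeta-q\|_{L_\infty(\p D)}\leq\|\zeta-q\|_\LinfD$ via \eqref{eq:TrivialBdd}, hence again by $\hD^\ell|\zeta|_{H^{\ell+1}(D)}$, while the $H^1$ seminorm term is handled by the stability estimate \eqref{eq:IDHOne}, $|\ID(\zeta-q)|_\HOD\lesssim|\zeta-q|_\HOD+\hD|\zeta-q|_{H^2(D)}$, which by the same Bramble--Hilbert bounds is $\lesssim\hD^\ell|\zeta|_{H^{\ell+1}(D)}$. Combining the two pieces yields \eqref{eq:1DInterpolationError}, with the constant's dependence on $|\ED|$ entering through \eqref{eq:IDHOne}.

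The estimate \eqref{eq:1DInterpolationErrorHalf} follows the identical scheme with the fractional replacements: \eqref{eq:Sobolev2} in place of \eqref{eq:Sobolev}, the fractional Bramble--Hilbert estimates \eqref{eq:BHEstimates2} in place of \eqref{eq:BHEstimates}, and the fractional stability estimate \eqref{eq:IDHOneHalf} in place of \eqref{eq:IDHOne}; the powers of $\hD$ then assemble to $\hD^{\ell-\frac12}|\zeta|_{H^{\ell+\frac12}(D)}$. I expect the only delicate point to be the borderline case $\ell=1$, where the top-order seminorm ($H^2$, resp.\ $H^{3/2}$) cannot be reduced by a linear $q$; this is harmless because $|\zeta-q|_{H^2(D)}=|\zeta|_{H^2(D)}$ (resp.\ $|\zeta-q|_{H^{3/2}(D)}=|\zeta|_{H^{3/2}(D)}$) for linear $q$, and the required power of $\hD$ already matches. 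The genuine engine of the argument is the maximum principle, which converts interior $L_\infty$ control of the virtual interpolant into boundary $L_\infty$ control plus an $H^1$ seminorm that the earlier stability estimates already dominate.
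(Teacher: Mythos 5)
Your proof is correct and takes essentially the same route as the paper's: both rest on the invariance \eqref{eq:IDInvariance}, the maximum principle of Lemma~\ref{lem:MaximumPrinciple} together with \eqref{eq:TrivialBdd}, the Sobolev inequality \eqref{eq:Sobolev} (resp.\ \eqref{eq:Sobolev2}), the stability estimate \eqref{eq:IDHOne} (resp.\ \eqref{eq:IDHOneHalf}), and the Bramble--Hilbert bounds \eqref{eq:BHEstimates} (resp.\ \eqref{eq:BHEstimates2}). The only cosmetic difference is that the paper first records the stability bound $\|\ID\zeta\|_\LinfD\lesssim \hD^{-1}\|\zeta\|_\LTD+|\zeta|_\HOD+\hD|\zeta|_\HTD$ for a general $\zeta$ and then subtracts the polynomial, whereas you apply the same chain of estimates directly to $\zeta-q$.
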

\begin{proof}  It follows from \eqref{eq:Sobolev}, Lemma~\ref{lem:MaximumPrinciple},
 \eqref{eq:TrivialBdd} and \eqref{eq:IDHOne} that, for any $\zeta\in \HTD$,
\begin{equation*}
  \|\ID\zeta\|_\LinfD\lesssim \|\ID\zeta\|_{L_\infty(\p D)}+|\ID\zeta|_\HOD
          \lesssim\hD^{-1}\|\zeta\|_\LTD+|\zeta|_\HOD+\hD|\zeta|_\HTD,
\end{equation*}
 which together with \eqref{eq:BHEstimates} and \eqref{eq:IDInvariance} imply
 \eqref{eq:1DInterpolationError}.
\par
 The proof for \eqref{eq:1DInterpolationErrorHalf} is similar, but with \eqref{eq:Sobolev}
 (resp., \eqref{eq:BHEstimates} and \eqref{eq:IDHOne}) replaced by \eqref{eq:Sobolev2}
 (resp., \eqref{eq:BHEstimates2} and \eqref{eq:IDHOneHalf}).
\end{proof}
%
%%%%%%%%%%%%%%%%%%%%%%%%%%%%%%%%%%
\subsection{The Null Space of $\POD$}\label{subsec:Kernel}
  Let $\KerD=\{v\in\cQD:\,\POD v=0\}$ be the null space of the projection $\POD$.
  The inverse estimates in Section~\ref{subsec:InverseEstimate} can be simplified
  for functions in $\KerD$.
\begin{lemma}\label{lem:KerPOD}
  There exists a positive constant $C$, depending only on $\rD$
  and $k$, such that
\begin{equation}\label{eq:KerPOD}
   \tbar v\tbar_{k,D}^2\leq C
   \hD\sum_{e\in\ED}\|\Pi_{k-1,e}^0 v\|_{L_2(e)}^2
   \qquad\forall\,v\in \KerD.
\end{equation}
\end{lemma}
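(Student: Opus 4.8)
The plan is to exploit the defining condition (iii) of $\cQD$ to collapse the interior term in $\tbar v\tbar_{k,D}$, and then to bound it by the edge term through a polynomial lifting combined with the defining relation for $\POD$. Since the second term of \eqref{eq:tbarNormDef} is already the right-hand side of \eqref{eq:KerPOD}, everything reduces to proving
\[
 \|\PDZmt v\|_\LTD^2\lesssim \hD\sum_{e\in\ED}\|\Pi_{k-1,e}^0 v\|_{L_2(e)}^2
 \qquad\forall\,v\in\KerD.
\]
First I would observe that for $v\in\KerD$ we have $\POD v=0$, so \eqref{eq:Condition3} forces $\PDZ v\in\PkmtD$. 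Because $\PkmtD\subset\PkD$ and $\PDZmt\PDZ=\PDZmt$, this yields $\PDZmt v=\PDZ v=:p$, a polynomial in $\bbP_{k-2}$. Using that $p$ is an $L_2(D)$-projection of $v$, I would write $\|p\|_\LTD^2=\int_D v\,p\,dx$.

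Next I would invoke Lemma~\ref{lem:Polynomial} with the ball $\tBD$ to produce $q\in\bbP_k$ with $\Delta q=p$ and $\|\nabla q\|_{L_2(\tBD)}\lesssim \hD\|p\|_{L_2(\tBD)}$; since $D\subset\tBD$ and the $L_2$-norms of a fixed-degree polynomial over $\tBD$ and over $D$ are equivalent by scaling and \eqref{eq:discs} (as in \eqref{eq:Fundamental1}), this gives $\|\nabla q\|_\LTD\lesssim \hD\|p\|_\LTD$. Because $q\in\PkD$, the definition \eqref{eq:POD1} of $\POD$ applied with $\POD v=0$ reads $\int_{\p D} v\,(n\cdot\nabla q)\,ds=\int_D v\,\Delta q\,dx$, so that
\[
 \|p\|_\LTD^2=\int_D v\,\Delta q\,dx=\sum_{e\in\ED}\int_e v\,(n\cdot\nabla q)\,ds.
\]
On each straight edge $e$ the normal $n$ is constant and $\nabla q$ has degree $\leq k-1$, so $n\cdot\nabla q\in\bbP_{k-1}(e)$ and $v$ may be replaced by $\Pi_{k-1,e}^0 v$ in the edge integral without changing its value.

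Then a Cauchy--Schwarz inequality over the edges gives
\[
 \|p\|_\LTD^2\leq\Big(\sum_{e\in\ED}\|\Pi_{k-1,e}^0 v\|_{L_2(e)}^2\Big)^{1/2}
 \Big(\sum_{e\in\ED}\|n\cdot\nabla q\|_{L_2(e)}^2\Big)^{1/2},
\]
and the discrete trace estimate \eqref{eq:DiscreteEstimate0} applied to the components of $\nabla q$ together with the bound on $\|\nabla q\|_\LTD$ yields $\big(\sum_{e\in\ED}\|n\cdot\nabla q\|_{L_2(e)}^2\big)^{1/2}\lesssim \hD^{-1/2}\|\nabla q\|_\LTD\lesssim \hD^{1/2}\|p\|_\LTD$. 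Dividing by $\|p\|_\LTD$ and squaring produces the desired interior bound, and combining it with \eqref{eq:tbarNormDef} finishes the proof. The main obstacle is conceptual rather than computational: one must recognize that the interior contribution $\|\PDZmt v\|_\LTD$ is not a free degree of freedom for $v\in\KerD$ but is slaved to the boundary data through the ellipticity encoded in \eqref{eq:POD1} and condition (iii), and the polynomial lifting of Lemma~\ref{lem:Polynomial} is precisely the device that converts the interior $L_2$-norm into a controllable boundary flux.
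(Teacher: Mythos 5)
Your proof is correct and follows essentially the same route as the paper's: both convert the interior term into a boundary flux via the identity $\int_D v(\Delta q)\,dx=\int_{\p D}v(n\cdot\nabla q)\,ds$ from \eqref{eq:POD1}, construct $q$ with $\Delta q=p$ and $\|\nabla q\|_\LTD\lesssim\hD\|p\|_\LTD$ via Lemma~\ref{lem:Polynomial} and \eqref{eq:discs}, and finish with \eqref{eq:DiscreteEstimate0} on $\nabla q$. The only cosmetic difference is that you test with the single polynomial $p=\PDZmt v$ (which makes your detour through condition \eqref{eq:Condition3} superfluous, since $\|\PDZmt v\|_\LTD^2=\int_D v\,(\PDZmt v)\,dx$ holds for any $v$), whereas the paper bounds $\int_D vp\,dx$ for every $p\in\bbP_{k-2}$ and concludes by duality; the two arguments are computationally identical.
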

\begin{proof}  We can assume $k\geq2$ since
 $\Pi_{k-2,D}\hspace{1pt}v=0$ for $k=1$.
  \par
 Let $v\in\KerD$ be arbitrary.  It follows from
 \eqref{eq:POD1} that
\begin{equation}\label{eq:KerPOD1}
  \int_D v(\Delta q)\,dx=\int_{\p D}v(n\cdot\nabla q)\,ds
  \qquad\forall \,q\in \PkD.
\end{equation}
\par
 According to \eqref{eq:discs} and Lemma~\ref{lem:Polynomial},
 given any $p\in\bbP_{k-2}$,
  there exists $q\in\Pk$ such that $\Delta q=p$ and
\begin{equation}\label{eq:KerPOD12}
  \|\nabla q\|_\LTD\leq\|\nabla q\|_{L_2(\tBD)}\lesssim \hD\|p\|_{L_2(\tBD)}
  \lesssim \hD\|p\|_{L_2(\fBD)}\leq \hD\|p\|_\LTD.
\end{equation}
\par
 It follows from
 \eqref{eq:DiscreteEstimate0}, \eqref{eq:KerPOD1} and \eqref{eq:KerPOD12} that
\begin{align*}
    \int_D vp\,dx
 &\leq \Big(\sum_{e\in\ED}\|\Pi_{k-1,e}^0 v\|_{L_2(e)}^2\Big)^\frac12
  \Big(\sum_{e\in\ED}\|\nabla q\|_{L_2(e)}^2\Big)^\frac12\\
 &\lesssim \Big(\sum_{e\in\ED}\|\Pi_{k-1,e}^0 v\|_{L_2(e)}^2\Big)^\frac12 \hD^{-1/2}
    \|\nabla q\|_\LTD\\
 &\lesssim \Big(\sum_{e\in\ED}\|\Pi_{k-1,e}^0 v\|_{L_2(e)}^2\Big)^\frac12 \hD^{1/2}
      \|p\|_\LTD
\end{align*}
  and hence
\begin{equation}\label{eq:KerPod13}
  \|\Pi_{k-2,D}^0v\|_\LTD=\max_{p\in\bbP_{k-2}}\int_D vp\,dx /\|p\|_\LTD
   \lesssim \Big(\hD\sum_{e\in\ED}\|\Pi_{k-1,e}^0 v\|_{L_2(e)}^2\Big)^\frac12.
\end{equation}
\par
 The estimate \eqref{eq:KerPOD} follows from \eqref{eq:tbarNormDef}
 and  \eqref{eq:KerPod13}.
\end{proof}
\begin{lemma}\label{lem:BdryEst}   For any $v\in\PkbD$ that vanishes at
 some point on $\p D$,
   we have
\begin{equation*}
     \|v\|_{L_2(\p D)} \leq C  \hD
            \|\p v/\p s\|_{L_2(\p D)},
\end{equation*}
 where $\p v /\p s$ denotes a tangential derivative of $v$ along $\p D$ and the
 positive constant $C$ only depends on $k$.
\end{lemma}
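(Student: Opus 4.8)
The plan is to regard the estimate as a Poincar\'e inequality for the restriction of $v$ to the closed curve $\p D$, established directly by the fundamental theorem of calculus. First I would fix a point $p_0\in\p D$ with $v(p_0)=0$ and parametrize $\p D$ by arclength. Since $v\in\PkbD$ is continuous on $\p D$ and is a polynomial on each edge, it is absolutely continuous along $\p D$ and its tangential derivative $\p v/\p s$ is a piecewise polynomial that belongs to $L_2(\p D)$; this is in fact the only place where the membership $v\in\PkbD$ is used.

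The main computation is short. For any $x\in\p D$, integrating $\p v/\p s$ along $\p D$ from $p_0$ to $x$ and using $v(p_0)=0$ gives $v(x)=\int_{p_0}^x (\p v/\p s)\,ds$, so that by the Cauchy--Schwarz inequality
\begin{equation*}
  |v(x)|\le \int_{\p D}\Big|\frac{\p v}{\p s}\Big|\,ds \le |\p D|^{1/2}\Big\|\frac{\p v}{\p s}\Big\|_{L_2(\p D)}.
\end{equation*}
Taking the supremum over $x\in\p D$ yields $\|v\|_{L_\infty(\p D)}\le |\p D|^{1/2}\|\p v/\p s\|_{L_2(\p D)}$, and therefore
\begin{equation*}
  \|v\|_{L_2(\p D)}^2 \le |\p D|\,\|v\|_{L_\infty(\p D)}^2 \le |\p D|^2\Big\|\frac{\p v}{\p s}\Big\|_{L_2(\p D)}^2 .
\end{equation*}
Taking square roots and invoking \eqref{eq:G2} (with $d=2$), which gives $|\p D|\approx \hD$, produces the asserted bound $\|v\|_{L_2(\p D)}\lesssim \hD\|\p v/\p s\|_{L_2(\p D)}$.

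There is essentially no hard step here: the argument is the classical ``vanishing at one point plus the fundamental theorem of calculus'' Poincar\'e estimate, and the piecewise-polynomial hypothesis is needed only to guarantee that $\p v/\p s\in L_2(\p D)$, with continuity of $v$ at the vertices allowing the line integral to be split edge by edge. The one point worth flagging is the source of the constant: the reduction from $|\p D|$ to $\hD$ is exactly \eqref{eq:G2}, whose hidden constant reflects the perimeter-to-diameter ratio of $D$. This is precisely what keeps the estimate insensitive to the presence of small edges, since $|\p D|\approx\hD$ holds uniformly no matter how short the individual edges are.
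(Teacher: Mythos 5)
Your proof is correct, but it follows a genuinely different route from the paper's. The paper pulls the estimate back to the circle $\p\fBD$ through the Lipschitz isomorphism $\Phi$ of Section~\ref{subsec:Lipschitz}: it invokes a Poincar\'e--Friedrichs inequality on $\p\fBD$ for functions vanishing at a point, and then transfers the result to $\p D$ via the norm equivalences \eqref{eq:L2Iso1} and \eqref{eq:H1Iso1}. You instead work directly on $\p D$ with the fundamental theorem of calculus and Cauchy--Schwarz, obtaining the explicit bound $\|v\|_{L_2(\p D)}\leq |\p D|\,\|\p v/\p s\|_{L_2(\p D)}$ and then using only \eqref{eq:G2} to replace $|\p D|$ by $\hD$. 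Your argument is more elementary and self-contained (it bypasses the $H^1$ invariance under the Lipschitz map entirely), and it makes transparent that the estimate is insensitive to small edges; the paper's version has the advantage of reusing machinery already set up in Section~\ref{sec:SS}, the same pattern that yields \eqref{eq:HalfAndOne} and the other boundary estimates. Note that both proofs actually apply to any absolutely continuous function on $\p D$ with square-integrable tangential derivative (i.e., any $\zeta\in H^1(\p D)$ vanishing at a point), and in both the constant comes from the geometry --- through \eqref{eq:G2} in yours, through $\Phi$ in the paper's --- so it depends on $\rD$ rather than on $k$; the polynomial structure of $v$ plays no quantitative role beyond guaranteeing $\p v/\p s\in L_2(\p D)$, exactly as you flagged.
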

\begin{proof}  It follows from a Poincar\'e-Friedrichs inequality on
the circle $\p\fBD$
 that
\begin{equation*}
  \|\zeta\|_{L_2(\p\fBD)}\lesssim \hD|\zeta|_{H^1(\p\fBD)}
\end{equation*}
 for any $\zeta\in H^1(\p\fBD)$ that vanishes at some point on $\p\fBD$.
 The lemma then follows from \eqref{eq:L2Iso1} and \eqref{eq:H1Iso1}.
\end{proof}
\par
 Note that every $v\in \KerD$ must vanish at some point on $\p\O$ because
 $\int_{\p D}v\,ds=0$ by \eqref{eq:POD2}.
 Therefore, in view of Lemma~\ref{lem:KerPOD} and Lemma~\ref{lem:BdryEst},
 the inverse estimates
 \eqref{eq:InverseEstimate1} and \eqref{eq:InverseEstimate2} can be simplified to
\begin{alignat*}{3}
  |v|_{\HOD}&\lesssim \hD^{1/2}\|\p v/\p s\|_{L_2(\p D)}
  &\qquad&\forall\,v\in \KerD\\
\intertext{with a hidden constant depending only on $\rD$ and $k$, and}
 |v|_{\HOD}&\lesssim \sqrt{\ln(1+\tD)}\|v\|_{L_\infty(\p D)}
 &\qquad&\forall\,v\in \KerD
\end{alignat*}
 with a hidden constant that also depends on $|\ED|$.
\par
 Hence we have
\begin{alignat}{3}
 |v|_\HOD^2&=|\POD v|_\HOD^2+|v-\POD v|_\HOD^2\label{eq:EnergyBdd1}\\
  &\lesssim |\POD v|_{H^1(D)}^2+\hD\|\p (v-\POD v)/\p s\|_{L_2(\p D)}^2
 &\qquad&\forall\,v\in\cQD,\notag\\
 \intertext{with a hidden constant depending only on $\rD$ and $k$, and also}
 |v|_\HOD^2&\lesssim |\POD v|_{H^1(D)}^2+\ln(1+\tD)\|v-\POD v\|_{L_\infty(\p D)}^2
 &\qquad&\forall\,v\in\cQD,\label{eq:EnergyBdd2}
\end{alignat}
 with a hidden constant that also depends on $|\ED|$.
%
%%%%%%%%%%%%%%%%%%%%%%%%%%%%%%%%%%%%%%%%%%%%%%%%%%%%%
%
\section{The Poisson Problem in Two Dimensions}\label{sec:Poisson2D}
 In this section
 we consider virtual element methods for the Poisson problem
 \eqref{eq:Poisson} in two dimensions
 and establish error estimates under two global shape regularity assumptions.
\par
 Let $\cT_h$ be a triangulation of the convex polygon $\O\subset\R^2$
 by polygonal subdomains,
 where $h=\d\max_{D\in\cT_h}\hD$ is the mesh parameter.
 The global virtual finite element space $\cQh$ is given by
 $$\cQh=\{v\in H^1_0(\O):\, v\big|_D\in\cQD \quad \forall\,D\in\cT_h\}.$$
 The space of (discontinuous) piecewise
  polynomials of degree $\leq k$ with respect to $\cT_h$ is denoted by $\cPh$.
\par
 The operators $\PO :H^1(\O)\longrightarrow \cPh$,
 $\PZ:L_2(\O)\longrightarrow \cPh$
 and $\Ih:H^2(\O)\cap H^1_0(\O)\longrightarrow \cQh$ are defined in terms of
 their local counterparts, i.e.,
 $$(\PO\zeta)\big|_D=\POD(\zeta\big|_D), \quad
 (\PZ\zeta)\big|_D=\PDZ(\zeta\big|_D) \quad
   \text{and} \quad (\Ih\zeta)\big|_D=\ID(\zeta\big|_D).
 $$
\par
 The piecewise $H^1$ norm with respect to $\cT_h$ is given by
\begin{equation}\label{eq:PiecewiseHOneNOrm}
 |v|_{h,1}=\Big(\SumD |v|_{\HOD}^2\Big)^\frac12.
\end{equation}
%
%%%%%%%%%%%%%%%%%%%%%%%%%%%%%%%%%%%%%%%%
\subsection{Global Shape Regularity Assumptions}
\label{subsec:GlobalShapeRegularity}
 We assume that the local shape regularity assumption \eqref{eq:SA}
 is satisfied by all
  $D\in\cT_h$ and impose the following global regularity assumptions.
\par\noindent
{\em Assumption 1}\quad There exists a positive number $\rho\in (0,1)$,
 independent of $h$, such that
\begin{equation}\label{eq:rho}
  \rD\geq\rho \qquad\forall\,D\in\cT_h.
\end{equation}
\par\noindent
{\em Assumption 2}\quad There exists a positive integer $N$,
 independent of $h$, such that
\begin{equation}\label{eq:N}
 |\ED|\leq N\qquad\forall\,D\in\cT_h.
\end{equation}
\par
 The hidden constants in the rest of Section~\ref{sec:Poisson2D}
 will only depend on $\rho$, $N$ and $k$.
%%%%%%%%%%%%%%%%%%%%%%%%%%%%%%%%%%%%%%%%%
\subsection{The Discrete Problem}\label{subsec:DiscreteProblem}
 Let the local stabilizing bilinear form $S^D_1(\cdot,\cdot)$ and
 $S^D_2(\cdot,\cdot)$ be defined by
\begin{align}
 S^D_1(w,v)&=\sum_{\NPD} w(p)v(p),\label{eq:SD1}\\
  S^D_2(w,v)&=\hD(\p w/\p s,\p v/\p s)_{L_2(\p D)},\label{eq:SD2}
\end{align}
 where $\p v/\p s$ denotes a tangential derivative of $v$ along $\p D$.
\begin{remark}\label{rem:SD}{\rm
 The local stability bilinear form $S^D_1(\cdot,\cdot)$ is the boundary part of
 the local stability bilinear form in \cite{BBCMMR:2013:VEM}.
 The bilinear form $S^D_2(\cdot,\cdot)$ was introduced in \cite{WRR:2016:VEM}.}
\end{remark}
\begin{remark}\label{rem:AlternativeSD2}{\rm
  We can also use the bilinear form $\tilde S^D_2(\cdot,\cdot)$ defined by
\begin{equation*}
   \tilde S^D_2(w,v)=\sum_{e\in\ED}\hE(\p w/\p s,\p v/\p s)_{L_2(e)}
\end{equation*}
}
\end{remark}
\begin{remark}\label{rem:NormEquivalence}
{\rm
  By the equivalence of norms on finite dimensional vector spaces, we have
 $$\sum_{p\in\cN_e}v^2(p)\approx \sum_{e\in\ED}\|v\|_{L_\infty(e)}^2
   \approx \|v\|_{L_\infty(\p D)}^2 \qquad \forall\, v\in \PkbD.$$
}
\end{remark}
\par
 The discrete problem for \eqref{eq:Poisson} is to
 find $u_h\in\cQh$ such that
\begin{equation}\label{eq:DiscreteProblem}
   a_h(u_h,v)=(f,\Xi_h v) \qquad \forall\,v\in\cQh,
\end{equation}
 where
\begin{align}
  a_h(w,v)&=\SumD \big[a^D(\POD w,\POD v)
  +S^D(w-\POD w,v-\POD v)\big],\label{eq:ah}\\
  a^D(w,v)&=\int_D \nabla w\cdot\nabla v\,dx,\label{eq:aD}
\end{align}
 $S^D(\cdot,\cdot)$ is either $S^D_1(\cdot,\cdot)$ or
 $S^D_2(\cdot,\cdot)$,
 and $\Xi_h:\cQh\longrightarrow\cPh$ is given by
\begin{equation}\label{eq:Xih}
    \Xi_h=\begin{cases} \Pi_{1,h}^0&\qquad\text{if $k=1,2,$}\\[4pt]
                                 \PZmt &\qquad\text{if $k\geq 3$}.\\
      \end{cases}
\end{equation}
\par
 It follows from \eqref{eq:EnergyBdd1} and \eqref{eq:EnergyBdd2} that
\begin{equation}\label{eq:Stability}
  |v|_{H^1(\O)}^2 \lesssim
    \alpha_h a_h(v,v) \qquad\forall\,v\in \cQh,
\end{equation}
 where
\begin{equation}\label{eq:kappah}
  \alpha_h=\begin{cases}\d
          \ln(1+\max_{D\in\cT_h}\tD)&\qquad
          \text{if $S^D(\cdot,\cdot)=S^D_1(\cdot,\cdot)$}\\
     1&\qquad\text{if $S^D(\cdot,\cdot)=S^D_2(\cdot,\cdot)$}
\end{cases}\;.
\end{equation}
 The well-posedness of the discrete problem follows from the
 stability estimate \eqref{eq:Stability}.
\par
 We will use the following properties of $\Xi_h$ in the error analysis.
\begin{lemma}\label{lem:Xih}
 We have, for $1\leq\ell\leq k$,
\begin{alignat}{3}
  (f,w-\Xi_h w)&\lesssim h^\ell|f|_{H^{\ell-1}(\O)} |w|_{H^1(\O)}
  &\qquad&\forall\,f\in H^{\ell-1}(\O),\;w\in\cQh, \label{eq:Xih1}\\
    (f,\Ih\zeta-\Xi_h\Ih\zeta)&\lesssim
    h^{\ell+1}|f|_{H^{\ell-1}(\O)}|\zeta|_{H^2(\O)}&\qquad&
   \forall f\in H^{\ell-1}(\O),\;\zeta\in H^2(\O).\label{eq:Xih2}
\end{alignat}
\end{lemma}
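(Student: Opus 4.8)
The plan is to exploit the single structural fact that, in both branches of \eqref{eq:Xih}, the restriction of $\Xi_h$ to any $D\in\cT_h$ is the $L_2(D)$-orthogonal projection onto $\bbP_m$ with $m=\max(1,k-2)\geq1$, together with the elementary observation that the constraint $1\leq\ell\leq k$ forces $\ell-2\leq m$. Thus $\Xi_h$ reproduces $\bbP_{\ell-2}$ on each element, and I can insert an element-wise best polynomial approximation of $f$ for free. Concretely, let $g\in\cPh$ be the piecewise polynomial whose restriction to each $D$ is the $L_2(D)$-best approximation of $f$ from $\bbP_{\ell-2}$ (taking $g=0$ in the degenerate case $\ell=1$). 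Since $g|_D\in\bbP_{\ell-2}\subseteq\bbP_m$ lies in the range of $\Xi_h$ on $D$, the orthogonality property $v-\Xi_h v\perp\bbP_m$ in $\LTD$ gives $(g,v-\Xi_h v)_{\LTD}=0$ for every $v$ and every $D$, so that after summation
\[
 (f,v-\Xi_h v)=(f-g,v-\Xi_h v)=\SumD (f-g,v-\Xi_h v)_{\LTD}.
\]
The Bramble--Hilbert estimate \eqref{eq:BHEstimates} (with $\ell=1$ handled trivially, since then $g=0$ and $\|f\|_{\LTD}=\hD^{0}|f|_{H^{0}(D)}$) yields $\|f-g\|_{\LTD}\lesssim\hD^{\ell-1}|f|_{H^{\ell-1}(D)}$.

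For \eqref{eq:Xih1} I would take $v=w$ and bound the second factor by a Poincar\'e estimate: because $\Xi_h|_D=\Pi_{m,D}^0$ is an $L_2$ projection reproducing constants, \eqref{eq:BHEstimates} gives $\|w-\Xi_h w\|_{\LTD}\lesssim\hD|w|_{\HOD}$. Cauchy--Schwarz on each $D$, the bound on $\|f-g\|_{\LTD}$, and then a discrete Cauchy--Schwarz over $\cT_h$ (using $\hD\leq h$ and the additivity of the seminorms over the partition) give
\[
 (f,w-\Xi_h w)\lesssim\SumD\hD^{\ell}|f|_{H^{\ell-1}(D)}|w|_{\HOD}\lesssim h^{\ell}|f|_{H^{\ell-1}(\O)}|w|_{H^1(\O)}.
\]

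For \eqref{eq:Xih2} I would take $v=\Ih\zeta$ and replace the Poincar\'e bound by the super-approximation estimate of Lemma~\ref{lem:LTwoIDSpecial}, namely $\|\ID\zeta-\Pi_{1,D}^0\ID\zeta\|_{\LTD}\lesssim\hD^{2}|\zeta|_{\HTD}$. When $k\geq3$ the projection $\PDZmt=\Pi_{k-2,D}^0$ is at least as accurate as $\Pi_{1,D}^0$ on $\bbP_1\subseteq\bbP_{k-2}$, so in both branches $\|\ID\zeta-\Xi_h\ID\zeta\|_{\LTD}\lesssim\hD^{2}|\zeta|_{\HTD}$. Combining with $\|f-g\|_{\LTD}\lesssim\hD^{\ell-1}|f|_{H^{\ell-1}(D)}$ and summing exactly as before produces the extra power of $h$,
\[
 (f,\Ih\zeta-\Xi_h\Ih\zeta)\lesssim\SumD\hD^{\ell+1}|f|_{H^{\ell-1}(D)}|\zeta|_{\HTD}\lesssim h^{\ell+1}|f|_{H^{\ell-1}(\O)}|\zeta|_{H^2(\O)}.
\]

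The only genuine subtlety, and the point that dictates the definition \eqref{eq:Xih}, lies in the second estimate: to gain the full $\hD^2$ one needs $\Xi_h$ to reproduce \emph{linear} polynomials, which is why $\Pi_{1,h}^0$ (rather than the useless $\Pi_{-1,h}^0$ or the merely first-order $\Pi_{0,h}^0$) is chosen when $k=1,2$. Everything else is Cauchy--Schwarz together with the Bramble--Hilbert and interpolation machinery already assembled in Section~\ref{subsec:BH} and Lemma~\ref{lem:LTwoIDSpecial}; the boundary condition $w\in H^1_0(\O)$ guarantees $|w|_{H^1(\O)}^2=\SumD|w|_{\HOD}^2$, so the closing discrete Cauchy--Schwarz step is legitimate.
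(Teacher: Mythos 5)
Your proof is correct and takes essentially the same approach as the paper: both arguments insert a piecewise polynomial approximation of $f$ that lies elementwise in the range of $\Xi_h$ (so it is annihilated by orthogonality), then apply Cauchy--Schwarz together with the Bramble--Hilbert bound $\|f-g\|\lesssim h^{\ell-1}|f|_{H^{\ell-1}(\O)}$ and, respectively, a Poincar\'e-type bound on $w-\Xi_h w$ for \eqref{eq:Xih1} and Lemma~\ref{lem:LTwoIDSpecial} on $\Ih\zeta-\Xi_h\Ih\zeta$ for \eqref{eq:Xih2}. The only cosmetic differences are that the paper subtracts $\PZmt f$ and then compares with the degree-$(\ell-2)$ projection, and uses a single global Cauchy--Schwarz instead of your elementwise-then-discrete version.
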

\begin{proof}
In view of the relation
\begin{align*}
  (f,w-\Xi_h w)&=(f-\PZmt f,w-\Xi_h w)\\
            &\leq \|f-\PZmt f\|_{L_2(\O)}\| w-\Xi_h w\|_{L_2(\O)}
           \leq \|f-\Pi_{\ell-2,h}^0 f\|_{L_2(\O)}
           \|w-\Pi_{0,h}^0 w\|_{L_2(\O)},
\end{align*}
 the estimate \eqref{eq:Xih1} follows from \eqref{eq:PDZLTwoError}.
\par
  Similarly we have
\begin{align*}
   (f,\Ih\zeta-\Xi_h\Ih\zeta)&=(f-\PZmt f,\Ih\zeta-\Xi_h\Ih\zeta)\\
    &\leq \|f-\PZmt f\|_{L_2(\O)}\|\Ih\zeta-\Xi_h\Ih\zeta\|_{L_2(\O)}\\
    &\leq \|f-\Pi_{\ell-2,h}^0f\|_{L_2(\O)}
    \|\Ih\zeta-\Pi_{1,h}^0\Ih\zeta\|_{L_2(\O)},
\end{align*}
 and the estimate \eqref{eq:Xih2} follows from \eqref{eq:PDZLTwoError} and
 Lemma~\ref{lem:LTwoIDSpecial}.
  Note that this is the reason why $\Xi_h$ is chosen to be
 $\Pi_{1,h}^0$ for $k=2$ instead of $\Pi_{k-2,h}^0=\Pi_{0,h}^0$.
\end{proof}
%%%%%%%%%%%%%%%%%%%%%%%%%%%%%%%%
\subsection{An Abstract Error Estimate in the Energy Norm}
\label{subsec:AbstractEnergyError}
 Let $\|\cdot\|_h=\sqrt{a_h(\cdot,\cdot)}$ be the mesh-dependent energy norm.
 Note that \eqref{eq:Stability} implies
\begin{equation}\label{eq:HOneVEM}
 |v|_{H^1(\O)}\lesssim
    \sqrt{\alpha_h}\|v\|_h \qquad\forall\,v\in\cQh.
\end{equation}
\par
 The discrete problem \eqref{eq:DiscreteProblem} is defined in
 terms of a non-inherited
 symmetric positive definite bilinear form.
 We have a standard error estimate (cf. \cite[Lemma~10.1.7]{BScott:2008:FEM}
 and \cite{BSS:1972:NC})
\begin{equation}\label{eq:Standard}
 \|u-u_h\|_h\leq \inf_{v\in\cQh}\|u-v\|_h+\sup_{v\in \cQh}
 \frac{a_h(u,v)-(f,\Xi_h v)}{\|v\|_h}.
\end{equation}
 The key is to control the numerator on the
 right-hand side of \eqref{eq:Standard}.
\par
 In view of \eqref{eq:Poisson}, \eqref{eq:aDef} and \eqref{eq:POD1} we
 can write, for any $v\in\cQh$,
\begin{align*}
  a_h(u,v)&=\SumD \big[a^D(\POD u,\POD v)+S^D(u-\POD u,v-\POD v)\big]\\
  &=\SumD \big[a^D(\POD u,v)+S^D(u-\POD u,v-\POD v)\big]\\
  &=\SumD \big[a^D(\POD u-u,v)+S^D(u-\POD u,v-\POD v)\big]+(f,v)\\
   &=\SumD \big[a^D(\POD u-u,v-\POD v)+S^D(u-\POD u,v-\POD v)\big]+(f,v),
\end{align*}
 and hence, by \eqref{eq:PODStability1}, \eqref{eq:ah} and \eqref{eq:HOneVEM},
\begin{align}\label{eq:Numerator}
 a_h(u,v)-(f,\Xi_h v)&=\SumD \big[a^D(\POD u-u,v-\POD v)
 +S^D(u-\POD u,v-\POD v)\big]\notag\\
 &\hspace{30pt}+(f,v-\Xi_h v),\notag\\
 &\lesssim \Big(\SumD|\POD u-u|_{\HOD}|v-\POD v|_{\HOD}\Big)
 +\|u-\PO u\|_h\|v\|_h\\
  &\hspace{30pt}+\Big(\sup_{w\in\cQh}\frac{(f,w-\Xi_h w)}{|w|_{H^1(\O)}}\Big)
   \sqrt{\alpha_h}\|v\|_h\notag\\
   &\lesssim \|u-\PO u\|_h\|v\|_h+
  \Big(|u-\PO u|_{h,1}+\sup_{w\in\cQh}
  \frac{(f,w-\Xi_h w)}{|w|_{H^1(\O)}}\Big)\sqrt{\alpha_h}\|v\|_h.\notag
\end{align}
\par
 Putting \eqref{eq:Standard} and \eqref{eq:Numerator} together
 we arrive at the
 estimate
\begin{equation}\label{eq:EnergyError}
 \|u-u_h\|_h\lesssim \|u-\Ih u\|_h+\|u-\PO u\|_h
 +\sqrt{\alpha_h}\Big(|u-\PO u|_{h,1}+\sup_{w\in\cQh}
 \frac{(f,w-\Xi_h w)}{|w|_{H^1(\O)}}\Big).
\end{equation}
\par
 Below we will derive concrete error estimates under the assumption
 that the solution $u$ of \eqref{eq:Poisson}  belongs to $H^{\ell+1}(\O)$
 for $1\leq\ell\leq k$.
%
%%%%%%%%%%%%%%%%%%%%%
\subsection{Concrete Error Estimates in the Energy Norm}
\label{subsec:ConcreteEnergyErrors}
 The terms on the right-hand side of \eqref{eq:EnergyError} are
 estimated as follows.
%%%%%%%%%%%%%%%%%
\par\medskip\noindent{\em Estimate for the Term Involving $f$}
\par\smallskip
 Since $u\in H^{\ell+1}(\O)$ and $f=-\Delta u$, we have, by \eqref{eq:Xih1},
\begin{equation}\label{eq:RHSError}
 \sup_{w\in\cQh}\frac{(f,w-\Xi_h w)}{|w|_{H^1(\O)}}
 \lesssim h^{\ell}|u|_{H^{\ell+1}(\O)}.
\end{equation}
%
%%%%%%%%%%%%%%%%%%%%%%%%%%
\par\medskip\noindent{\em Estimate for $|u-\PO u|_{h,1}$}
\par\smallskip
 It follows directly from \eqref{eq:HOnePOD} that
\begin{equation}\label{eq:PODError}
 |u-\PO u|_{h,1}=\Big(\SumD |u-\POD u|_\HOD^2\Big)^\frac12
 \lesssim h^\ell|u|_{H^{\ell+1}(\O)}.
\end{equation}
%
%%%%%%%%%%%%%%%%%%%%%%%%%%
\par\medskip\noindent{\em Estimate for $\|u-\Ih u\|_h$}
\par\smallskip
 We will establish the estimate
\begin{equation}\label{eq:InterpolationError}
 \|u-\Ih u\|_h\lesssim h^{\ell}\unorm
\end{equation}
 for both choices of $S^D(\cdot,\cdot)$.
\par
 In the case where $S^D(\cdot,\cdot)=S^D_1(\cdot,\cdot)$, it follows from
 \eqref{eq:PODStability1},
 \eqref{eq:SD1} and \eqref{eq:ah} that
\begin{align}\label{eq:InterError1}
 \|u-\Ih u\|_h^2&\lesssim \SumD |\POD(u-\ID u)|_\HOD^2+\SumD
  \|(u-\ID u)-\POD(u-\ID u)\|_{L_\infty(\p D)}^2\notag\\
 &\lesssim \SumD \big(|u-\ID u|_\HOD^2+\|u-\ID u\|_{L_\infty(\p D)}^2\big)\\
   &\hspace{40pt}+\SumD |\POD(u-\ID u)|_{L_\infty(D)}^2,\notag
\end{align}
 and we have
\begin{align}\label{eq:InterError2}
 \SumD |\POD(u-\ID u)|_{L_\infty(D)}^2&\lesssim
 \SumD\big(\,(\,\overline{\POD(u-\ID u)}\,)_{\p D}^2
 +|\POD(u-\ID u)|_\HOD^2\big)\notag\\
 &\leq
 \SumD\big(\,(\,\overline{u-\ID u}\,)_{\p D}^2+|u-\ID u|_\HOD^2\big)\\
 &\lesssim \SumD \big(\|u-\ID u\|_{L_\infty(\p D)}^2
 +|u-\ID u|_\HOD^2\big),\notag
\end{align}
 by \eqref{eq:DiscreteEstimate2}, \eqref{eq:Condition3} and
 \eqref{eq:PODStability1}.
\par
 The estimate \eqref{eq:InterpolationError} now follows from
 \eqref{eq:HOnePODID}, \eqref{eq:1DInterpolationError},
  \eqref{eq:InterError1} and \eqref{eq:InterError2}.
\par
 In the case where $S^D(\cdot,\cdot)=S^D_2(\cdot,\cdot)$, it follows from
 \eqref{eq:SD2} and \eqref{eq:ah} that
\begin{align*}
  \|u-\Ih u\|_h^2&\lesssim \SumD|\POD(u-\ID u)|_\HOD^2
     + \SumD\hD \|\p [\POD(u-\ID u)]/\p s\|_{L_2(\p D)}^2\notag\\
       &\hspace{30pt}+{\SumD\hD \|\p(u-\ID u)/\p s\|_{L_2(\p D)}^2}.
\end{align*}
 We have
\begin{equation*}
 \SumD|\POD(u-\ID u)|_\HOD^2\leq \SumD|u-\ID u|_\HOD^2\lesssim
 h^{2\ell}\unorm^2
\end{equation*}
 by \eqref{eq:PODStability1} and \eqref{eq:HOnePODID},
 and hence, in view of
 \eqref{eq:DiscreteEstimate0}  (applied to the first order derivatives
 of the polynomial
 $\POD(u-\ID u)$),
\begin{align*}
 \SumD\hD \|\p [\POD(u-\ID u)]/\p s\|_{L_2(\p D)}^2&\lesssim
    \SumD \|\POD(u-\ID u)|_\HOD^2\lesssim h^{2\ell}\unorm^2.
\end{align*}
 Finally it follows from
 a standard interpolation error estimate in one variable
 and \eqref{eq:HalfTrace} (applied to the $\ell$-th order
 derivatives of $u$) that
\begin{align*}
  \SumD\hD \|\p(u-\ID u)/\p s\|_{L_2(\p D)}^2&\lesssim
  \SumD\hD\sum_{e\in\ED}\hE^{2\ell-1}
        \,|\p^\ell u/\p s^\ell|_{H^{1/2}(e)}^2
        \lesssim h^{2\ell}\unorm^2.
\end{align*}
\par
 Together these estimates imply \eqref{eq:InterpolationError}.
\goodbreak
%%%%%%%%%%%%%%%%%%%%%%%%%%
\par\medskip\noindent{\em Estimate for $\|u-\PO u\|_h$}
\par\smallskip
 We will show that the estimate
\begin{equation}\label{eq:ProjectionError}
 \|u-\PO u\|_h\lesssim h^\ell\unorm
\end{equation}
 holds for both choices of $S^D(\cdot,\cdot)$.
  \par
 In the case where $S^D(\cdot,\cdot)=S^D_1(\cdot,\cdot)$, it follows from
 \eqref{eq:Sobolev}, Lemma~\ref{lem:PODErrors}, \eqref{eq:SD1} and \eqref{eq:ah}
 that
\begin{align*}\label{eq:ProjectionEst2}
 \|u-\PO u\|_h^2
  &\lesssim \SumD \|u-\PO u\|_{L_\infty(\p D)}^2\\
     &\lesssim \SumD\big[\hD^{-2}\|u-\POD u\|_\LTD^2+|u-\POD u|_{\HOD}^2
         +\hD^2|u-\POD u|_{H^2(D)}^2\big]\notag\\
             &\lesssim h^{2\ell}|u|_{H^{\ell+1}(\O)}^2.\notag
\end{align*}
\par
 In the case where $S^D(\cdot,\cdot)=S^D_2(\cdot,\cdot)$,  we have
\begin{align*}
   \|u-\PO u\|_h^2&=\SumD\hD \|\p(u-\POD u)/\p s\|_\LTD^2\\
 &\lesssim \SumD \big[|u-\POD u|_{\HOD}^2+\hD^2|u-\POD u|_{H^2(D)}^2\big]
  \lesssim h^{2\ell}|u|_{H^{\ell+1}(\O)}^2\notag
\end{align*}
 by \eqref{eq:Trace} (applied to the first order derivatives of
  $u-\POD u$) and Lemma~\ref{lem:PODErrors}.
\par
  Putting \eqref{eq:EnergyError}--\eqref{eq:InterpolationError}
  and \eqref{eq:ProjectionError} together, we
 arrive at the following result.
\begin{theorem}\label{thm:ConcreteEnergyError}
  Assuming the solution $u$ of \eqref{eq:Poisson} belongs to
 $H^{\ell+1}(\O)$ for $\ell$ between $1$ and $k$, we have
\begin{equation*}
  \|u-u_h\|_h\leq C\sqrt{\alpha_h} h^\ell|u|_{H^{\ell+1}(\O)},
\end{equation*}
 where $\alpha_h$ is defined in \eqref{eq:kappah} and the positive
 constant $C$ only depends
on $\rho$, $k$ and $N$.
\end{theorem}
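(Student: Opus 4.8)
The plan is to treat this as an assembly step, since the abstract energy error estimate \eqref{eq:EnergyError} already isolates exactly the four quantities that need to be controlled, and concrete bounds for each have been established in the preceding paragraphs. First I would invoke \eqref{eq:EnergyError}, namely
$$
 \|u-u_h\|_h\lesssim \|u-\Ih u\|_h+\|u-\PO u\|_h
 +\sqrt{\alpha_h}\Big(|u-\PO u|_{h,1}+\sup_{w\in\cQh}
 \frac{(f,w-\Xi_h w)}{|w|_{H^1(\O)}}\Big),
$$
and then substitute the term-by-term estimates into its right-hand side.

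Next I would use the hypothesis $u\in H^{\ell+1}(\O)$, which via $f=-\Delta u$ also gives $f\in H^{\ell-1}(\O)$, and plug in: \eqref{eq:InterpolationError} for the interpolation term $\|u-\Ih u\|_h$; \eqref{eq:ProjectionError} for the projection term $\|u-\PO u\|_h$; \eqref{eq:PODError} for the piecewise seminorm $|u-\PO u|_{h,1}$; and \eqref{eq:RHSError} for the consistency supremum involving $f$. Each of these has the common form $\lesssim h^\ell\unorm$, so after substitution every term on the right of \eqref{eq:EnergyError} is $\lesssim h^\ell\unorm$, with two of them carrying the extra factor $\sqrt{\alpha_h}$.

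To collect everything into the single factor $\sqrt{\alpha_h}\,h^\ell\unorm$, I would observe that $\alpha_h$ is bounded below by a fixed positive constant in both cases of \eqref{eq:kappah}: for $S^D_2(\cdot,\cdot)$ we have $\alpha_h=1$, while for $S^D_1(\cdot,\cdot)$ the ratio $\tD\geq 1$ from \eqref{eq:TauD} forces $\alpha_h=\ln(1+\max_{D\in\cT_h}\tD)\geq\ln 2>0$. Hence $1\lesssim\sqrt{\alpha_h}$, so the two unweighted contributions $\|u-\Ih u\|_h$ and $\|u-\PO u\|_h$ are themselves $\lesssim\sqrt{\alpha_h}\,h^\ell\unorm$; summing the four contributions yields the asserted bound, with a hidden constant depending only on $\rho$, $N$ and $k$ through the global shape-regularity assumptions \eqref{eq:rho} and \eqref{eq:N}.

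I do not expect a genuine obstacle at this final step, since all the analytic content resides in the earlier lemmas. The hard part, were one to build the result from scratch, is the consistency identity \eqref{eq:Numerator}: because $a_h(\cdot,\cdot)$ is a non-inherited form and the load is tested against $\Xi_h v$ rather than $v$, controlling $a_h(u,v)-(f,\Xi_h v)$ requires rewriting it via \eqref{eq:POD1} and $f=-\Delta u$ as the stabilization mismatch $S^D(u-\POD u,v-\POD v)$ plus the projection defect $a^D(\POD u-u,v-\POD v)$ plus the load term $(f,v-\Xi_h v)$, and then passing from $|v|_{H^1(\O)}$ to $\|v\|_h$ through \eqref{eq:Stability} and \eqref{eq:HOneVEM}. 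That passage is precisely what generates the weight $\sqrt{\alpha_h}$, and it is the only place where the logarithmic factor enters.
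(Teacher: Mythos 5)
Your proposal is correct and takes essentially the same route as the paper, which proves the theorem exactly by combining the abstract estimate \eqref{eq:EnergyError} with the concrete bounds \eqref{eq:RHSError}, \eqref{eq:PODError}, \eqref{eq:InterpolationError} and \eqref{eq:ProjectionError}. Your explicit observation that $1\lesssim\sqrt{\alpha_h}$ (since $\tD\geq 1$ forces $\ln(1+\tD)\geq\ln 2$, and $\alpha_h=1$ otherwise) is the only step the paper leaves implicit, and you handle it correctly.
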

\par We have similar estimates for the computable approximate
 solutions $\PO u_h$ and $\PZ u_h$.
\begin{theorem}\label{thm:ComputableEnergyError}
  Assuming the solution $u$ of \eqref{eq:Poisson} belongs
  to $H^{\ell+1}(\O)$
   for some $\ell$ between $1$ and $k$, we have
\begin{equation}\label{eq:ComputableEnergyError}
  |u-u_h|_{H^1(\O)}+\sqrt{\alpha_h}\big[|u-\PO u_h|_{h,1}
  +|u-\PZ u|_{h,1}\big]
 \leq C \alpha_h h^\ell |u|_{H^{\ell+1}(\O)},
\end{equation}
 where $\alpha_h$ is defined in \eqref{eq:kappah} and the positive
 constant $C$ only depends on $\rho$, $N$ and $k$.
\end{theorem}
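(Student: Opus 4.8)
The plan is to bootstrap from Theorem~\ref{thm:ConcreteEnergyError}, which already furnishes $\|u-u_h\|_h\lesssim\sqrt{\alpha_h}\,h^\ell\unorm$, and to convert this energy-norm bound into the three $H^1$-type quantities on the left-hand side one at a time. The only device for passing from $\|\cdot\|_h$ to an $H^1$ seminorm is the stability inequality \eqref{eq:HOneVEM}, $|v|_{H^1(\O)}\lesssim\sqrt{\alpha_h}\,\|v\|_h$ for $v\in\cQh$, so each such passage costs a factor $\sqrt{\alpha_h}$; keeping track of these factors is the crux of the argument.

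For $|u-u_h|_{H^1(\O)}$ I would insert the interpolant and write $|u-u_h|_{H^1(\O)}\le|u-\Ih u|_{H^1(\O)}+|\Ih u-u_h|_{H^1(\O)}$. The first piece is $\lesssim h^\ell\unorm$ by the summed interpolation estimate \eqref{eq:HOnePODID}. For the second, since $\Ih u-u_h\in\cQh$, \eqref{eq:HOneVEM} gives $|\Ih u-u_h|_{H^1(\O)}\lesssim\sqrt{\alpha_h}\,\|\Ih u-u_h\|_h\le\sqrt{\alpha_h}(\|u-\Ih u\|_h+\|u-u_h\|_h)$, and both norms on the right are $\lesssim\sqrt{\alpha_h}\,h^\ell\unorm$ by \eqref{eq:InterpolationError} and Theorem~\ref{thm:ConcreteEnergyError}. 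This yields $|u-u_h|_{H^1(\O)}\lesssim\alpha_h h^\ell\unorm$.

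For the $\PO u_h$ term I would split $|u-\PO u_h|_{h,1}\le|u-\PO u|_{h,1}+|\PO(u-u_h)|_{h,1}$. The first summand is $\lesssim h^\ell\unorm$ by \eqref{eq:PODError}. The clean point is that $\SumD|\POD w|_{\HOD}^2\le a_h(w,w)$ directly from the definition \eqref{eq:ah} (the stabilization being nonnegative); applied to the admissible function $w=u-u_h$ this gives $|\PO(u-u_h)|_{h,1}\le\|u-u_h\|_h\lesssim\sqrt{\alpha_h}\,h^\ell\unorm$. Hence $\sqrt{\alpha_h}\,|u-\PO u_h|_{h,1}\lesssim\alpha_h h^\ell\unorm$, with no loss, because $\POD(u-u_h)$ is \emph{itself} a component of the energy form and needs no auxiliary estimate.

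The delicate and, I expect, hardest term is the one with $\PZ u_h$, precisely because $\PDZ(u-u_h)$ is not a component of $a_h(u-u_h,\cdot)$. After peeling off $|u-\PZ u|_{h,1}\lesssim h^\ell\unorm$ (by \eqref{eq:PDZHOneError}) I would decompose, on each $D$, $\PDZ(u-u_h)=(\PDZ u-\POD u)+\POD(u-u_h)+(\POD u_h-\PDZ u_h)$; the first group is $\lesssim h^\ell\unorm$ by \eqref{eq:PDZHOneError} and \eqref{eq:HOnePOD}, and the middle group is $\lesssim\|u-u_h\|_h$ exactly as in the $\PO u_h$ step. The third group is where the work lies: \eqref{eq:Condition3} forces $\POD u_h-\PDZ u_h\in\PkmtD$, so \eqref{eq:DiscreteEstimate1} converts its $|\cdot|_{\HOD}$ into $\hD^{-1}$ times an $\LTD$ norm, and rewriting $\POD u_h-\PDZ u_h=\PDZmt(\POD u_h-u_h)$ with $u_h-\POD u_h\in\KerD$ (zero boundary mean, so \eqref{eq:PF2} applies) reduces matters to the projection error $u-\POD u$, controlled by \eqref{eq:LTwoPOD}, together with the consistency term $(u-u_h)-\POD(u-u_h)$. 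The main obstacle is that the energy norm controls this last, non-virtual consistency term only through the stabilization $S^D$, whereas the kernel inverse estimates of Section~\ref{subsec:Kernel} (and Lemma~\ref{lem:BdryEst}) apply to functions in $\cQD$; the whole difficulty is to pass from the stabilization contribution to $\hD^{-1}\|(u-u_h)-\POD(u-u_h)\|_\LTD$ while retaining only the single stated power of $\alpha_h$. For $S^D_2(\cdot,\cdot)$ one has $\alpha_h=1$ and the estimate is immediate; the careful accounting of the logarithmic factor attached to $S^D_1(\cdot,\cdot)$ is the subtle point, and is where I would concentrate the technical effort.
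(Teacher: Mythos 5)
Your treatment of the first two terms is exactly the paper's proof: for $|u-u_h|_{H^1(\O)}$ the paper likewise inserts $\Ih u$, applies \eqref{eq:HOneVEM} to $\Ih u-u_h\in\cQh$, and combines \eqref{eq:HOnePODID}, \eqref{eq:InterpolationError} and Theorem~\ref{thm:ConcreteEnergyError}; for $|u-\PO u_h|_{h,1}$ it uses the same splitting $|u-\PO u|_{h,1}+|\PO(u-u_h)|_{h,1}$ together with your observation that $\SumD|\POD w|_{\HOD}^2\leq a_h(w,w)$, i.e.\ $|\PO(u-u_h)|_{h,1}\leq\|u-u_h\|_h$.

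The difficulty you locate in the third term, however, is self-inflicted: the quantity in the statement is $|u-\PZ u|_{h,1}$ --- the projection of the exact solution $u$, not of $u_h$ --- so it is a pure approximation-error term that never involves the discrete solution. Your own ``peeling off'' step, $|u-\PZ u|_{h,1}\lesssim h^\ell\unorm$ by \eqref{eq:PDZHOneError}, already finishes it, since $\alpha_h$ is bounded below (by $\min\{1,\ln 2\}$, as $\tD\geq1$) and hence $\sqrt{\alpha_h}\,h^\ell\lesssim\alpha_h h^\ell$. The paper gets the same bound by writing $u-\PZ u=(u-\PO u)+\PZ(\PO u-u)$ and combining \eqref{eq:PODError} with the stability estimate \eqref{eq:PDZHOne}, which is the same mathematics as \eqref{eq:PDZHOneError}. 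Everything in your final paragraph beyond that point --- the decomposition of $\PDZ(u-u_h)$, the use of \eqref{eq:Condition3} and \eqref{eq:DiscreteEstimate1}, the attempt to control $\hD^{-1}\|(u_h-\POD u_h)\|_\LTD$ through the stabilization --- is aimed at the stronger claim $|u-\PZ u_h|_{h,1}\lesssim\sqrt{\alpha_h}\,h^\ell\unorm$, which the theorem does not assert. Your instinct that this stronger claim is problematic for $S^D_1(\cdot,\cdot)$ is sound: the natural route, $|\PZ(u-u_h)|_{h,1}\lesssim|u-u_h|_{H^1(\O)}\lesssim\alpha_h h^\ell\unorm$ via \eqref{eq:PDZHOne}, yields only $\alpha_h^{3/2}h^\ell\unorm$ once the prefactor $\sqrt{\alpha_h}$ is attached, and none of the paper's tools obviously does better --- presumably the reason the theorem (and its three dimensional analog, Theorem~\ref{thm:3DComputableEnergyError}) is phrased with $\PZ u$. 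So once the phantom term is discarded, your proposal is a complete and correct proof that coincides with the paper's.
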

\begin{proof} In view of \eqref{eq:ah} and Theorem~\ref{thm:ConcreteEnergyError},
 we have
\begin{equation*}
 |\PO (u-u_h)|_{h,1}\leq \|u-u_h\|_h\lesssim
 \sqrt{\alpha_h}h^{\ell}|u|_{H^{\ell+1}(\O)},
\end{equation*}
 which together with  \eqref{eq:PODError}
 implies
\begin{equation*}
 |u-\PO u_h|_{h,1}\leq |u-\PO u|_{h,1}+|\PO(u-u_h)|_{h,1}
 \lesssim \sqrt{\alpha_h}h^{\ell}|u|_{H^{\ell+1}(\O)}.
\end{equation*}
 It follows from this estimate and
   \eqref{eq:PDZHOne} that
\begin{equation*}
  |u-\PZ u|_{h,1}\leq |u-\PO u|_{h,1}+|\PZ(\PO u-u)|_{h,1}
  \lesssim |u-\PO u|_{h,1}\lesssim
  \sqrt{\alpha_h}h^{\ell}\unorm.
\end{equation*}
\par
 Finally we have, by \eqref{eq:HOnePODID}, \eqref{eq:HOneVEM},
 \eqref{eq:InterpolationError}
 and Theorem~\ref{thm:ConcreteEnergyError},
\begin{align*}
 |u-u_h|_{H^1(\O)}&\leq |u-\Ih u|_{H^1(\O)}+|\Ih u-u_h|_{H^1(\O)}\\
 &\lesssim |u-\Ih u|_{H^1(\O)}+\sqrt{\alpha_h}\|\Ih u-u_h\|_h\\
  &\leq |u-\Ih u|_{H^1(\O)}+\sqrt{\alpha_h}
  \big(\|u-\Ih u\|_h+\|u-u_h\|_h\big)
  \lesssim \alpha_h h^{\ell}\unorm.
\end{align*}
\end{proof}
\goodbreak
\begin{remark}\label{rem:HOneErrors}{\rm
 In the case where $S^D(\cdot,\cdot)=S^D_1(\cdot,\cdot)$,
 the estimates for $|u-\PO u_h|_{h,1}$ and $|u-\PZ u_h|_{h,1}$
 are better than the
 estimate for $|u-u_h|_{H^1(\O)}$.
}
\end{remark}
%%%%%%%%%%%%%%%%%%%%%%%
\subsection{Error Estimates in the $L_2$ Norm}\label{subsec:L2Error}
 We begin with two lemmas involving $S^D(\cdot,\cdot)$.
\begin{lemma}\label{lem:SDEstimate}
 We have
\begin{equation}\label{eq:SDEstimate}
 \SumD
 S^D(\zeta-\POD\ID \zeta,\zeta-\POD\ID \zeta)\lesssim
 h^2|\zeta|_{H^2(\O)}^2\qquad\forall\,
  \zeta\in H^2(\O)\cap H^1_0(\O).
\end{equation}
\end{lemma}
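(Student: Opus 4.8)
The plan is to prove this lemma separately for the two choices of the stabilizing form, $S^D_1$ and $S^D_2$, since the final error bound \eqref{eq:SDEstimate} must hold in either case. In both cases the idea is to bound the local contribution by quantities involving $\zeta - \ID\zeta$ and $\zeta - \POD\ID\zeta$, for which we already have the optimal interpolation estimates from Lemma~\ref{lem:InterpolationError} (specialized to $\ell=1$).

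First I would treat the case $S^D(\cdot,\cdot)=S^D_1(\cdot,\cdot)$. By the definition \eqref{eq:SD1} and Remark~\ref{rem:NormEquivalence}, the local term $S^D_1(\zeta-\POD\ID\zeta,\zeta-\POD\ID\zeta)=\sum_{p\in\NPD}(\zeta-\POD\ID\zeta)^2(p)$ is comparable to $\|\zeta-\POD\ID\zeta\|_{L_\infty(\p D)}^2$. I would split this via the triangle inequality into $\|\zeta-\ID\zeta\|_{L_\infty(\p D)}^2$ and $\|\ID\zeta-\POD\ID\zeta\|_{L_\infty(\p D)}^2$, noting that $\ID\zeta$ and $\zeta$ agree at the nodes $\NPD$. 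The first piece is controlled by the $L_\infty$ interpolation estimate \eqref{eq:1DInterpolationError} with $\ell=1$, giving $\hD^2|\zeta|_{H^2(D)}^2$. For the second piece I would use \eqref{eq:Condition3} and the maximum-principle-type estimate \eqref{eq:DiscreteEstimate2} (applied to the polynomial $\POD\ID\zeta$, whose boundary mean matches that of $\ID\zeta$), reducing it to $|\ID\zeta-\POD\ID\zeta|_{\HOD}$ and boundary-mean terms that are in turn bounded through \eqref{eq:HOnePODID} and \eqref{eq:1DInterpolationError} by $\hD^2|\zeta|_{H^2(D)}^2$. Summing over $D$ then yields \eqref{eq:SDEstimate}; this parallels the argument already carried out in \eqref{eq:InterError1}--\eqref{eq:InterError2}.

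Next I would treat the case $S^D(\cdot,\cdot)=S^D_2(\cdot,\cdot)$. By \eqref{eq:SD2}, the local term is $\hD\|\p(\zeta-\POD\ID\zeta)/\p s\|_{L_2(\p D)}^2$, which I would split into $\hD\|\p(\zeta-\ID\zeta)/\p s\|_{L_2(\p D)}^2$ and $\hD\|\p[\POD\ID\zeta-\ID\zeta]/\p s\|_{L_2(\p D)}^2$, plus a middle term handled the same way. For the genuinely polynomial term involving $\POD\ID\zeta-\ID\zeta$ on each edge, I would apply the discrete trace estimate \eqref{eq:DiscreteEstimate0} to the first-order derivatives of the polynomial $\POD\ID\zeta$, converting the weighted boundary $L_2$ norm into the interior seminorm $|\POD\ID\zeta-\ID\zeta|_{\HOD}^2$, bounded by \eqref{eq:HOnePODID} and \eqref{eq:PODStability1}. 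For the remaining term $\hD\|\p(\zeta-\ID\zeta)/\p s\|_{L_2(\p D)}^2$, I would use a standard one-dimensional interpolation error estimate edge by edge together with the trace inequality \eqref{eq:Trace} (or Lemma~\ref{lem:CZ1}) applied to the first derivatives of $\zeta$, exactly as in the analogous computation already performed in the estimate for $\|u-\Ih u\|_h$ under $S^D_2$. Summing over $D\in\cT_h$ gives $h^2|\zeta|_{H^2(\O)}^2$.

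The main obstacle I anticipate is the polynomial-projection term $\POD\ID\zeta$: it is not directly an interpolant of $\zeta$, so one cannot apply interpolation estimates to it. The trick in both cases is to keep $\POD\ID\zeta$ inside the $H^1$ seminorm, where the stability \eqref{eq:PODStability1} and the interpolation error \eqref{eq:HOnePODID} are available, rather than estimating it directly on the boundary; the discrete estimates \eqref{eq:DiscreteEstimate0} and \eqref{eq:DiscreteEstimate2} are precisely what convert boundary quantities of a polynomial back to its interior seminorm without any dependence on the edge ratio $\tD$. Apart from this, the proof is a routine assembly of estimates already proved above, which is presumably why the author states only that ``the proof is similar.''
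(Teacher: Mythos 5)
Your handling of the two pieces involving $\zeta-\ID\zeta$ is fine, but the treatment of the leftover piece $\ID\zeta-\POD\ID\zeta$ contains a genuine gap, and it is the same gap in both cases. The discrete estimates \eqref{eq:DiscreteEstimate0} and \eqref{eq:DiscreteEstimate2} are statements about polynomials on $D$, and $\ID\zeta-\POD\ID\zeta$ is \emph{not} a polynomial ($\ID\zeta$ is a virtual element function), so neither estimate can ``convert'' the boundary quantities $\|\ID\zeta-\POD\ID\zeta\|_{L_\infty(\p D)}$ or $\hD\|\p(\ID\zeta-\POD\ID\zeta)/\p s\|_{L_2(\p D)}^2$ into the interior seminorm $|\ID\zeta-\POD\ID\zeta|_\HOD$. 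If you instead apply them to the polynomial $\POD\ID\zeta$ alone, as you literally propose, the right-hand side becomes $|\POD\ID\zeta|_\HOD\approx|\zeta|_\HOD$, which is $O(1)$ rather than $O(\hD|\zeta|_\HTD)$: the triangle-inequality split has destroyed the cancellation between $\ID\zeta$ and its projection, so the resulting bound is not error-sized. Nor can the step be repaired while keeping your decomposition: an inequality of the form $\hD\|\p w/\p s\|_{L_2(\p D)}^2\lesssim |w|_\HOD^2$ for virtual-element-type functions $w$ is exactly the kind of inverse estimate that fails on meshes with small edges (for the boundary hat function attached to an endpoint of a tiny edge $e$, the left-hand side is of order $\hD/\hE$ while the right-hand side grows only logarithmically in $\hD/\hE$ --- this failure is the very reason stabilization is needed), and its $L_\infty$ analogue costs at least a factor growing with $\tD$; since \eqref{eq:SDEstimate} carries no dependence on $\tD$, this route cannot give the lemma. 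The decomposition that \emph{is} compatible with your polynomial trick is $\zeta-\POD\ID\zeta=(\zeta-\POD\zeta)+\POD(\zeta-\ID\zeta)$: the second summand is a genuine polynomial whose seminorm is error-sized by \eqref{eq:PODStability1} and \eqref{eq:HOnePODID}, so \eqref{eq:DiscreteEstimate0}/\eqref{eq:DiscreteEstimate2} together with \eqref{eq:POD2} apply to it (the pattern of \eqref{eq:InterError2}), while the first summand is handled by \eqref{eq:Sobolev} or \eqref{eq:Trace} together with Lemma~\ref{lem:PODErrors} (the pattern of \eqref{eq:ProjectionError}).

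In fact no splitting is needed at all, and your stated ``main obstacle'' --- that one cannot apply interpolation estimates to $\POD\ID\zeta$ --- is a misconception: Lemma~\ref{lem:InterpolationError}, which you invoke at the outset, estimates precisely $\zeta-\POD\ID\zeta$ in all three norms \eqref{eq:LTwoPODID}, \eqref{eq:HOnePODID} and \eqref{eq:HTwoPODID}. Since $\POD\ID\zeta\in\PkD$, the difference $\zeta-\POD\ID\zeta$ belongs to $H^2(D)$, so the continuous inequalities \eqref{eq:Sobolev} and \eqref{eq:Trace} may be applied to it directly. This is the paper's entire proof:
\begin{align*}
 S^D_1(\zeta-\POD\ID\zeta,\zeta-\POD\ID\zeta)
 &\lesssim \|\zeta-\POD\ID\zeta\|_{L_\infty(\p D)}^2\\
 &\lesssim \hD^{-2}\|\zeta-\POD\ID\zeta\|_\LTD^2+|\zeta-\POD\ID\zeta|_\HOD^2
   +\hD^2|\zeta-\POD\ID\zeta|_\HTD^2
 \lesssim \hD^2|\zeta|_\HTD^2
\end{align*}
by \eqref{eq:Sobolev} and Lemma~\ref{lem:InterpolationError} with $\ell=1$; for $S^D_2$ one applies \eqref{eq:Trace} to the first-order derivatives of $\zeta-\POD\ID\zeta$ to get $\hD\|\p(\zeta-\POD\ID\zeta)/\p s\|_{L_2(\p D)}^2\lesssim |\zeta-\POD\ID\zeta|_\HOD^2+\hD^2|\zeta-\POD\ID\zeta|_\HTD^2\lesssim \hD^2|\zeta|_\HTD^2$. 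Summing over $D\in\cT_h$ yields \eqref{eq:SDEstimate} in both cases.
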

\begin{proof}
  It follows from \eqref{eq:Trace} (applied to the first
  order partial derivatives of
  $\zeta-\POD\ID\zeta$), Lemma~\ref{lem:InterpolationError}
  and \eqref{eq:SD2} that
\begin{align*}
   &\SumD
   S^D_2(\zeta-\POD\ID \zeta,\zeta-\POD\ID \zeta)
   =\SumD\hD\|\p(\zeta-\POD\ID \zeta)/\p s\|_{L_2(\p D)}^2\\
   &\hspace{60pt}\lesssim \SumD \big[|\zeta-\POD\ID\zeta|_{\HOD}^2+
         \hD^2|\zeta-\POD\ID\zeta|_{H^2(D)}^2\big]
         \lesssim \hD^2|\zeta|_{H^2(\O)}^2,
\end{align*}
 and we have
\begin{align*}
  &\SumD
  S^D_1(\zeta-\POD\ID\zeta,\zeta-\POD\ID \zeta)\lesssim
 \SumD\|\zeta-\POD\ID \zeta\|_{L_\infty(\p D)}^2\\
  &\hspace{30pt}\lesssim \SumD\big[\hD^{-2}\|\zeta-\POD\ID \zeta\|_\LTD^2
  +|\zeta-\POD\ID \zeta|_{\HOD}^2+\hD^2|\zeta-\POD\ID \zeta|_{H^2(D)}^2\big]\\
  &\hspace{30pt}\lesssim \hD^2|\zeta|_{H^2(\O)}^2
\end{align*}
 by \eqref{eq:Sobolev}, Lemma~\ref{lem:InterpolationError} and
 \eqref{eq:SD1}.
\end{proof}
\begin{lemma}\label{lem:POSD}
    Assuming that $u\in H^{\ell+1}(\O)$ for some $\ell$
    between $1$ and $k$, we have
\begin{equation}\label{eq:POSD}
  \SumD S^D(u_h-\POD u_h,u_h-\POD u_h)\lesssim
  \alpha_h^2 h^{2\ell}\unorm^2.
\end{equation}
\end{lemma}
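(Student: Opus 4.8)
The plan is to reduce everything to the energy-norm error estimates already established in Section~\ref{subsec:ConcreteEnergyErrors}, using the elementary observation that for any $w$ the quantity $\SumD S^D(w-\POD w,w-\POD w)$ is controlled by a squared energy norm. Writing $u_h=\Ih u+(u_h-\Ih u)$ on each subdomain and using the linearity of $\POD$, I would first split
\[
u_h-\POD u_h=(\ID u-\POD\ID u)+(e-\POD e),\qquad e:=u_h-\Ih u\in\cQh .
\]
Since each local form $S^D(\cdot,\cdot)$ is symmetric and positive semi-definite, the associated (global) semi-norm obeys the triangle inequality, so
\[
\Big(\SumD S^D(u_h-\POD u_h,u_h-\POD u_h)\Big)^{1/2}\leq\Big(\SumD S^D(\ID u-\POD\ID u,\ID u-\POD\ID u)\Big)^{1/2}+\Big(\SumD S^D(e-\POD e,e-\POD e)\Big)^{1/2},
\]
and it remains to bound the two terms on the right, uniformly in the choice $S^D=S^D_1$ or $S^D=S^D_2$.

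For the second term, $e\in\cQh$, so by the definition \eqref{eq:ah} of $a_h$ and the nonnegativity of $a^D(\POD e,\POD e)$ I get $\SumD S^D(e-\POD e,e-\POD e)\leq\|e\|_h^2$. Using $\|e\|_h\leq\|u-u_h\|_h+\|u-\Ih u\|_h$ together with Theorem~\ref{thm:ConcreteEnergyError} and \eqref{eq:InterpolationError} gives $\|e\|_h\lesssim\sqrt{\alpha_h}\,h^{\ell}\unorm$, hence the second term is $\lesssim\alpha_h h^{2\ell}\unorm^2$.

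For the first term I would decompose once more,
\[
\ID u-\POD\ID u=(u-\POD u)-\big[(u-\ID u)-\POD(u-\ID u)\big],
\]
and apply the triangle inequality again. Here the point is that $\POD(u-\POD u)=0$ by the idempotency \eqref{eq:Projection}, so the $a^D$-contribution to $a_h$ drops out and $\SumD S^D(u-\POD u,u-\POD u)=\|u-\PO u\|_h^2$; likewise, discarding the nonnegative $a^D$-part shows $\SumD S^D\big((u-\ID u)-\POD(u-\ID u),(u-\ID u)-\POD(u-\ID u)\big)\leq\|u-\Ih u\|_h^2$. Both right-hand sides are $\lesssim h^{2\ell}\unorm^2$ by \eqref{eq:ProjectionError} and \eqref{eq:InterpolationError}, so the first term is $\lesssim h^{2\ell}\unorm^2$.

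Combining the two bounds yields $\SumD S^D(u_h-\POD u_h,u_h-\POD u_h)\lesssim\alpha_h h^{2\ell}\unorm^2\leq\alpha_h^2 h^{2\ell}\unorm^2$, since $\alpha_h\geq1$. I expect the only delicate point—and the main obstacle—to be the bookkeeping that recognizes each $S^D$-quantity appearing here as (a piece of) one of the already-controlled energy norms $\|u-\PO u\|_h$, $\|u-\Ih u\|_h$, $\|u-u_h\|_h$; once the decompositions above are in place and the nonnegative $a^D$-terms are dropped, no estimate beyond those of Section~\ref{subsec:ConcreteEnergyErrors} is required, and the argument is identical for $S^D_1$ and $S^D_2$.
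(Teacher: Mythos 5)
Your proof is correct, and it takes a route that differs in a meaningful way from the paper's. The paper argues in one step: by the idempotency of $\POD$ (which you also use), $\SumD S^D(u_h-\POD u_h,u_h-\POD u_h)=\|u_h-\PO u_h\|_h^2$, and then a single triangle inequality gives $\|u_h-\PO u_h\|_h\leq \|u_h-u\|_h+\|u-\PO u\|_h+\|\PO(u-u_h)\|_h$, where the last term is bounded by $|u-u_h|_{H^1(\O)}$ via \eqref{eq:PODStability1} and then by Theorem~\ref{thm:ComputableEnergyError}; since that $H^1$ estimate carries a factor $\alpha_h$ (not $\sqrt{\alpha_h}$), the paper lands exactly on $\alpha_h^2h^{2\ell}\unorm^2$. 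You instead route the comparison through the interpolant, writing $u_h-\POD u_h=(\ID u-\POD\ID u)+(e-\POD e)$ with $e=u_h-\Ih u\in\cQh$, and never invoke Theorem~\ref{thm:ComputableEnergyError}: everything reduces to $\|u-u_h\|_h$, $\|u-\Ih u\|_h$ and $\|u-\PO u\|_h$, each of which costs at most $\sqrt{\alpha_h}\,h^\ell\unorm$. The payoff is that your argument actually proves the sharper bound $\SumD S^D(u_h-\POD u_h,u_h-\POD u_h)\lesssim \alpha_h h^{2\ell}\unorm^2$, which of course implies \eqref{eq:POSD}; the price is a slightly longer bookkeeping (two decompositions and two triangle inequalities versus the paper's one). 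One small imprecision: when $S^D(\cdot,\cdot)=S^D_1(\cdot,\cdot)$ the claim $\alpha_h\geq 1$ is not literally true, since $\alpha_h=\ln(1+\max_{D\in\cT_h}\tD)$ can be as small as $\ln 2$ when all edges of every subdomain have equal length; but $\tD\geq 1$ gives $\alpha_h\geq\ln 2$ uniformly, so $\alpha_h\lesssim\alpha_h^2$ still holds and your final step goes through with the hidden constant absorbing $1/\ln 2$.
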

\begin{proof}
  This is a consequence of \eqref{eq:ah},
\eqref{eq:ProjectionError}, Theorem~\ref{thm:ConcreteEnergyError}
  and Theorem~\ref{thm:ComputableEnergyError}:
\begin{align*}
  &\SumD S^D(u_h-\POD u_h,u_h-\POD u_h)= \|u_h-\PO u_h\|_h^2\\
  &\hspace{100pt}\lesssim \|u_h-u\|_h^2+\|u-\PO u\|_h^2
  +\|\PO(u-u_h)\|_h^2\\
  &\hspace{100pt}=\|u_h-u\|_h^2+\|u-\PO u\|_h^2+|u-u_h|_{H^1(\O)}^2
  \lesssim \alpha_h^2 h^{2\ell}\unorm^2.
\end{align*}
\end{proof}
\par
 We can now prove a consistency estimate.
\begin{lemma}\label{lem:ConsistencyError}
  Assuming that $u\in H^{\ell+1}(\O)$ for some $\ell$
  between $1$ and $k$, we have
\begin{equation*}
  a(u-u_h,\Ih\zeta)\leq C{\alpha_h}
   h^{\ell+1}|u|_{H^{\ell+1}(\O)}
     |\zeta|_{H^2(\O)} \qquad \forall\,
     \zeta\in H^2(\O)\cap H^1_0(\O),
\end{equation*}
 where $\alpha_h$ is defined in \eqref{eq:kappah} and
 the positive constant $C$
 only depends on $\rho$, $k$ and $N$.
\end{lemma}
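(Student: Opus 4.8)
The plan is to combine Galerkin orthogonality from the weak form \eqref{eq:Poisson} and the discrete problem \eqref{eq:DiscreteProblem} with the polynomial consistency of $a_h$. Put $v=\Ih\zeta$, which belongs to $\cQh\subset H^1_0(\O)$. Since $f=-\Delta u$ and $v\in H^1_0(\O)$, the weak form gives $a(u,v)=(f,v)$, while \eqref{eq:DiscreteProblem} gives $a_h(u_h,v)=(f,\Xi_h v)$. Subtracting the two and regrouping, I would start from
\begin{equation*}
 a(u-u_h,\Ih\zeta)=(f,\Ih\zeta-\Xi_h\Ih\zeta)+\big[a_h(u_h,\Ih\zeta)-a(u_h,\Ih\zeta)\big].
\end{equation*}
The first term is bounded at once by \eqref{eq:Xih2}; since $|f|_{H^{\ell-1}(\O)}=|\Delta u|_{H^{\ell-1}(\O)}\lesssim\unorm$, it contributes $\lesssim h^{\ell+1}\unorm|\zeta|_{H^2(\O)}$ with no power of $\alpha_h$.

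For the consistency term I would expand $a_h$ via \eqref{eq:ah} and exploit \eqref{eq:POD1}. The algebraic heart of the matter is that \eqref{eq:POD1} and the symmetry of $a^D$ yield $a^D(\POD u_h,\POD v)=a^D(u_h,\POD v)$ and $a^D(\POD u_h,v-\POD v)=0$, so that on each $D$
\begin{equation*}
 a^D(\POD u_h,\POD v)-a^D(u_h,v)=-a^D(u_h-\POD u_h,v-\POD v).
\end{equation*}
Consequently
\begin{equation*}
 a_h(u_h,\Ih\zeta)-a(u_h,\Ih\zeta)=\SumD\big[S^D(u_h-\POD u_h,v-\POD v)-a^D(u_h-\POD u_h,v-\POD v)\big],
\end{equation*}
with $v=\Ih\zeta$, so that every summand now carries the factor $u_h-\POD u_h$ against the factor $\ID\zeta-\POD\ID\zeta$.

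I would then bound the two sums by the Cauchy--Schwarz inequality, the first in the $S^D$ semi-inner product and the second in the broken $H^1$ seminorm. On the $\Ih\zeta$ side both factors are of order $h|\zeta|_{H^2(\O)}$: for the $a^D$ sum this follows from \eqref{eq:HOnePODID} with $\ell=1$ (applied to $|\zeta-\ID\zeta|_\HOD$ and $|\zeta-\POD\ID\zeta|_\HOD$), and for the $S^D$ sum I would use the triangle inequality
\begin{equation*}
 \Big(\SumD S^D(\ID\zeta-\POD\ID\zeta,\,\cdot\,)\Big)^{1/2}\le\Big(\SumD S^D(\zeta-\POD\ID\zeta,\,\cdot\,)\Big)^{1/2}+\Big(\SumD S^D(\zeta-\ID\zeta,\,\cdot\,)\Big)^{1/2},
\end{equation*}
whose first piece is exactly Lemma~\ref{lem:SDEstimate} and whose second piece vanishes when $S^D=S^D_1$ (because $\Ih\zeta$ interpolates $\zeta$ at the nodes of $\NPD$) and is $\lesssim h^2|\zeta|_{H^2(\O)}^2$ when $S^D=S^D_2$, by the one-dimensional interpolation estimate together with \eqref{eq:HalfTrace}, exactly as in the treatment of $\|u-\Ih u\|_h$ in Section~\ref{subsec:ConcreteEnergyErrors}. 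On the $u_h$ side, the $S^D$ factor is $\lesssim\alpha_h h^\ell\unorm$ by Lemma~\ref{lem:POSD}, and for the broken $H^1$ factor I would write $u_h-\POD u_h=(u_h-u)+(u-\POD u)-\POD(u_h-u)$ and use \eqref{eq:PODStability1}, Theorem~\ref{thm:ComputableEnergyError} and \eqref{eq:PODError} to obtain $\SumD|u_h-\POD u_h|_\HOD^2\lesssim\alpha_h^2h^{2\ell}\unorm^2$. Each sum is therefore $\lesssim\alpha_h h^{\ell+1}\unorm|\zeta|_{H^2(\O)}$, and adding the first term gives the claim.

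The step I expect to be most delicate is keeping the power of $\alpha_h$ equal to one. The tempting shortcut of estimating $\SumD|u_h-\POD u_h|_\HOD^2$ through the local inverse estimates \eqref{eq:EnergyBdd1}--\eqref{eq:EnergyBdd2} together with Lemma~\ref{lem:POSD} would insert an additional factor of $\sqrt{\alpha_h}$ in the $S^D_1$ case (degrading the bound to $\alpha_h^{3/2}$), so it is essential to route that factor through the \emph{global} energy estimate of Theorem~\ref{thm:ComputableEnergyError} instead. The only other point requiring care is the evaluation of $S^D_2$ on $\zeta-\Ih\zeta$, where the tangential-derivative interpolation error must be summed over the (possibly very short) edges without losing a power of $h$.
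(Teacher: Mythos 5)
Your proof is correct and follows essentially the same route as the paper: the identity you derive for $a_h(u_h,\Ih\zeta)-a(u_h,\Ih\zeta)$ reproduces exactly the paper's three-term decomposition, and you bound the three terms with the same ingredients (\eqref{eq:Xih2} for the $f$-term, Lemma~\ref{lem:SDEstimate} with Lemma~\ref{lem:POSD} for the $S^D$-term, and \eqref{eq:HOnePODID}, \eqref{eq:PODError}, Theorem~\ref{thm:ComputableEnergyError} for the $a^D$-term), keeping the power of $\alpha_h$ equal to one in the same way. Your explicit triangle-inequality splitting $\ID\zeta-\POD\ID\zeta=(\zeta-\POD\ID\zeta)-(\zeta-\ID\zeta)$, with the second piece vanishing for $S^D_1$ and bounded via the one-dimensional interpolation estimate for $S^D_2$, is in fact slightly more careful than the paper, which applies Lemma~\ref{lem:SDEstimate} without commenting on this discrepancy.
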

\begin{proof}
 We have, by \eqref{eq:Poisson}, \eqref{eq:aDef}, \eqref{eq:POD1} and
 \eqref{eq:DiscreteProblem}--\eqref{eq:aD},
\begin{align*}%\label{eq:ConsistencyFormula1}
 &a(u-u_h,\Ih\zeta)=a(u,\Ih\zeta)-\SumD a^D(u_h,\ID\zeta)\notag\\
           &\hspace{40pt}=(f,\Ih\zeta)-\SumD a^D(u_h,\POD\ID\zeta)
           -\SumD a^D(u_h,\ID\zeta-\POD\ID\zeta)\\
           &\hspace{40pt}=(f,\Ih\zeta)-a_h(u_h,\ID\zeta)
           +\SumD S^D(u_h-\POD u_h,\ID\zeta-\POD\ID\zeta)\\
           &\hspace{80pt}+\SumD a^D(\POD u_h-u_h,\ID\zeta
           -\POD\ID\zeta)\\
 &\hspace{40pt}=(f,\Ih\zeta-\Xi_h\Ih\zeta)
 +\SumD S^D(u_h-\POD u_h,\ID\zeta-\POD\ID\zeta)\\
      &\hspace{80pt}+\SumD a^D(\POD u_h-u_h,\ID\zeta-\POD\ID\zeta),
\end{align*}
 and the three terms on the right-hand side  can be estimated as follows.
\par
 We have
\begin{equation*}
 |(f,\Ih\zeta-\Xi_h\Ih\zeta)|\lesssim h^{\ell+1}\unorm|\zeta|_{H^2(\O)}
\end{equation*}
 by \eqref{eq:Xih2},
\begin{equation*}
\sum_{D\in\cT_h} {S^D(u_h-\POD u_h,\ID\zeta-\POD\ID\zeta)}
 \lesssim {\alpha_h}h^{\ell+1}\unorm|\zeta|_{H^2(\O)}
\end{equation*}
 by Lemma~\ref{lem:SDEstimate} and Lemma~\ref{lem:POSD},
 and
\begin{align*}
& \sum_{D\in\cT_h} a^D(\POD u-u_h,\ID\zeta-\POD \ID\zeta)\\
    &\hspace{40pt}\leq \big[|\POD u-u|_{h,1}+|u-u_h|_{H^1(\O)}\big]
   \big[|\ID\zeta-\zeta|_{H^1(\O)}+|\zeta-\POD\ID\zeta|_{h,1}\big]\\
     &\hspace{40pt}\lesssim {\alpha_h}
      h^{\ell+1}\unorm|\zeta|_{H^2(\O)}
\end{align*}
 by Lemma~\ref{lem:InterpolationError}, \eqref{eq:PODError}
 and Theorem~\ref{thm:ComputableEnergyError}.
\end{proof}
\begin{theorem}\label{thm:uhLTwoError}
 Assuming $u\in H^{\ell+1}(\O)$ for some $\ell$ between $1$ and $k$,
 there exists a positive constant $C$, depending only on
 $\rho$, $N$ and $k$, such that
\begin{equation}\label{eq:uhLTwoError}
 \|u-u_h\|_{L_2(\O)}\leq C{\alpha_h}h^{\ell+1}|u|_{H^{\ell+1}(\O)},
\end{equation}
 where $\alpha_h$ is defined in \eqref{eq:kappah}.
\end{theorem}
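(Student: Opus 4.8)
The plan is to use the standard Aubin--Nitsche duality argument. First I would introduce the dual problem: find $w\in H^1_0(\O)$ such that $a(w,v)=(u-u_h,v)$ for all $v\in H^1_0(\O)$. Since $\O$ is convex, elliptic regularity gives $w\in H^2(\O)$ together with the bound $|w|_{H^2(\O)}\lesssim \|u-u_h\|_{L_2(\O)}$. Because both $u$ and $u_h$ lie in $H^1_0(\O)$, I may take $v=u-u_h$ in the dual problem and use the symmetry of $a(\cdot,\cdot)$ to obtain the identity
\begin{equation*}
  \|u-u_h\|_{L_2(\O)}^2=a(u-u_h,w).
\end{equation*}

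Next I would split the right-hand side using the interpolant $\Ih w$ as
\begin{equation*}
  a(u-u_h,w)=a(u-u_h,w-\Ih w)+a(u-u_h,\Ih w),
\end{equation*}
and estimate the two terms separately. For the first term, the Cauchy--Schwarz inequality gives $a(u-u_h,w-\Ih w)\leq |u-u_h|_{H^1(\O)}\,|w-\Ih w|_{H^1(\O)}$; the energy-norm estimate from Theorem~\ref{thm:ComputableEnergyError} controls $|u-u_h|_{H^1(\O)}\lesssim \alpha_h h^\ell\unorm$, while the interpolation error estimate \eqref{eq:HOnePODID} (applied with $\ell=1$ on each $D$ and summed over $D\in\cT_h$) gives $|w-\Ih w|_{H^1(\O)}\lesssim h\,|w|_{H^2(\O)}$, so that the first term is bounded by $\alpha_h h^{\ell+1}\unorm\,|w|_{H^2(\O)}$. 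The second term is precisely the quantity estimated in Lemma~\ref{lem:ConsistencyError} with $\zeta=w$, yielding $a(u-u_h,\Ih w)\lesssim \alpha_h h^{\ell+1}\unorm\,|w|_{H^2(\O)}$.

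Combining these two bounds and invoking the regularity estimate $|w|_{H^2(\O)}\lesssim \|u-u_h\|_{L_2(\O)}$, I arrive at
\begin{equation*}
  \|u-u_h\|_{L_2(\O)}^2\lesssim \alpha_h h^{\ell+1}\unorm\,\|u-u_h\|_{L_2(\O)},
\end{equation*}
and cancelling one factor of $\|u-u_h\|_{L_2(\O)}$ gives the claimed bound \eqref{eq:uhLTwoError}. The substantive work is already contained in Lemma~\ref{lem:ConsistencyError}, whose proof handles the non-inherited nature of the discrete form $a_h(\cdot,\cdot)$ and the inconsistency introduced by the right-hand side functional $(f,\Xi_h\cdot)$. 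With that consistency estimate available, the remaining steps are the routine ingredients of the duality argument, so I do not anticipate any genuine obstacle beyond correctly assembling the pieces and keeping track of the factor $\alpha_h$.
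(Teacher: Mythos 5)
Your proposal is correct and follows essentially the same route as the paper's own proof: the identical Aubin--Nitsche duality argument with the splitting $a(u-u_h,w)=a(u-u_h,w-\Ih w)+a(u-u_h,\Ih w)$, the first term handled by Theorem~\ref{thm:ComputableEnergyError} together with the interpolation estimate \eqref{eq:HOnePODID}, and the second by Lemma~\ref{lem:ConsistencyError} applied to the dual solution. The paper's proof is exactly this assembly, so there is nothing to add.
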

\begin{proof}
 Let $\zeta\in H^1_0(\O)$ be defined by
\begin{equation*}
   a(v,\zeta)=(v,u-u_h)  \qquad\forall\,v\in H^1_0(\O).
\end{equation*}
 We have
\begin{equation}\label{eq:Duality1}
   \|u-u_h\|_{L_2(\O)}^2=a(u-u_h,\zeta)
                          = a(u-u_h,\zeta-\Ih\zeta)+a(u-u_h,\Ih\zeta),
\end{equation}
 and since $\O$ is convex,
\begin{equation}\label{eq:Duality2}
  \|\zeta\|_{H^2(\O)}\leq C_\O \|u-u_h\|_{L_2(\O)}
\end{equation}
 by elliptic regularity \cite{Grisvard:1985:EPN,Dauge:1988:EBV}.
\par
 The first term on the right-hand side of  \eqref{eq:Duality1} satisfies
\begin{equation}\label{eq:Duality3}
   a(u-u_h,\zeta-\Ih\zeta)\leq |u-u_h|_{H^1(\O)}
   |\zeta-\Ih\zeta|_{H^1(\O)}\\
          \lesssim  h|u-u_h|_{H^1(\O)}|\zeta|_{H^2(\O)}
\end{equation}
 by \eqref{eq:HOnePODID}, and then
 \eqref{eq:uhLTwoError} follows from
 Theorem~\ref{thm:ComputableEnergyError},
 Lemma~\ref{lem:ConsistencyError}
 and \eqref{eq:Duality1}--\eqref{eq:Duality3}.
\end{proof}
\par
 We have similar $L_2$ error estimates for the computable
 approximations  $\PZ u_h$ and $\PO u_h$.
\begin{theorem}\label{thm:ComputableLTwoErrors}
 Assuming $u\in H^{\ell+1}(\O)$ for some $\ell$ between $1$ and $k$,
  there exists a positive constant $C$, depending only on $\rho$,
  $N$ and $k$,  such that
\begin{equation*}
 \|u-\PZ u_h\|_{L_2(\O)}
 +\|u-\PO u_h\|_{L_2(\O)}\leq C{\alpha_h}h^{\ell+1}|u|_{H^{\ell+1}(\O)},
\end{equation*}
 where $\alpha_h$ is defined in \eqref{eq:kappah}.
\end{theorem}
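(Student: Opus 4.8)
The plan is to bound each computable approximation by comparing it against the corresponding projection of the exact solution $u$ and then invoking the $L_2$ and energy error estimates for $u_h$ already established in Theorem~\ref{thm:uhLTwoError} and Theorem~\ref{thm:ComputableEnergyError}. First I would treat $\PZ u_h$. By the triangle inequality,
\[
 \|u-\PZ u_h\|_{L_2(\O)}\leq \|u-\PZ u\|_{L_2(\O)}+\|\PZ(u-u_h)\|_{L_2(\O)}.
\]
The first term is a pure $L_2$-projection error, controlled by summing the local estimate \eqref{eq:PDZLTwoError} over $\cT_h$ to give $\|u-\PZ u\|_{L_2(\O)}\lesssim h^{\ell+1}\unorm$. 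For the second term I would use that $\PZ$ is, locally and hence globally, the $L_2$-orthogonal projection onto $\cPh$, so by \eqref{eq:PDZLTwo} it is a contraction: $\|\PZ(u-u_h)\|_{L_2(\O)}\leq \|u-u_h\|_{L_2(\O)}\lesssim \alpha_h h^{\ell+1}\unorm$ by Theorem~\ref{thm:uhLTwoError}. Combining the two bounds yields the estimate for $\PZ u_h$.

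For $\PO u_h$ I would again split
\[
 \|u-\PO u_h\|_{L_2(\O)}\leq \|u-\PO u\|_{L_2(\O)}+\|\PO(u-u_h)\|_{L_2(\O)},
\]
where the first term is $\lesssim h^{\ell+1}\unorm$ by summing \eqref{eq:LTwoPOD}. The difference from the $\PZ$ case is that $\PO$ is not an $L_2$-projection and hence not a contraction in $L_2$; the key observation is that Lemma~\ref{lem:PODStability2} together with \eqref{eq:tbarBdd} furnishes the local bound $\|\POD(u-u_h)\|_\LTD\lesssim \|u-u_h\|_\LTD+\hD|u-u_h|_{\HOD}$. Summing over $\cT_h$ and using that $u-u_h\in H^1_0(\O)$, so the piecewise $H^1$ seminorm equals $|u-u_h|_{H^1(\O)}$, gives $\|\PO(u-u_h)\|_{L_2(\O)}\lesssim \|u-u_h\|_{L_2(\O)}+h\,|u-u_h|_{H^1(\O)}$.

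Then I would insert the two error bounds for $u_h$: Theorem~\ref{thm:uhLTwoError} gives $\|u-u_h\|_{L_2(\O)}\lesssim \alpha_h h^{\ell+1}\unorm$, while Theorem~\ref{thm:ComputableEnergyError} gives $h\,|u-u_h|_{H^1(\O)}\lesssim \alpha_h h^{\ell+1}\unorm$; adding these shows $\|\PO(u-u_h)\|_{L_2(\O)}\lesssim \alpha_h h^{\ell+1}\unorm$, and the triangle inequality completes the estimate for $\PO u_h$. Since every cited result has hidden constants depending only on $\rho$, $N$ and $k$, so does the final constant. I do not expect a serious obstacle here, as both arguments are duality-free and rest entirely on prior results; the only point requiring care—the main obstacle, such as it is—is the $\PO u_h$ term, where one must supply the $L_2$ stability of $\POD$ (which is not orthogonality but follows from Lemma~\ref{lem:PODStability2} and \eqref{eq:tbarBdd}) and remember to use both the energy-norm and $L_2$-norm convergence rates of $u_h$ simultaneously, since the $h$-weighted $H^1$ error is exactly what is needed to reach the optimal $h^{\ell+1}$ order.
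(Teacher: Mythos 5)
Your proof is correct, and for the $\PZ u_h$ half it coincides with the paper's argument verbatim (triangle inequality through $\PZ u$, contraction property \eqref{eq:PDZLTwo}, then \eqref{eq:PDZLTwoError} and Theorem~\ref{thm:uhLTwoError}). For the $\PO u_h$ half you take a genuinely leaner route than the paper. The paper pivots on the interpolant: it writes $u-\PO u_h=(u-\PO\Ih u)+\PO(\Ih u-u_h)$, applies Lemma~\ref{lem:PODStability2} to the virtual element function $\Ih u-u_h\in\cQh$, unrolls the semi-norm $\tbar\cdot\tbar_{k,D}$ via \eqref{eq:tbarNormDef} and the trace inequality \eqref{eq:Trace}, and then must additionally invoke the interpolation error estimates \eqref{eq:LTwoPODID} and \eqref{eq:HOnePODID} to remove $\Ih u$ at the end. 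You instead pivot on $\PO u$ itself and apply Lemma~\ref{lem:PODStability2} together with \eqref{eq:tbarBdd} directly to $u-u_h$; this is legitimate because both results are stated (and proved) for arbitrary $\zeta\in\HOD$, not merely for functions in $\cQD$, which is precisely the observation that lets you bypass the interpolant. The two arguments rest on the same key lemma and consume the same two prior theorems (Theorem~\ref{thm:uhLTwoError} for the $L_2$ rate and Theorem~\ref{thm:ComputableEnergyError} for the $h$-weighted $H^1$ rate), and both yield constants depending only on $\rho$, $N$ and $k$; what your version buys is the elimination of $\Ih u$ and of the interpolation error estimates from the proof, at no cost in generality.
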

\begin{proof} The estimate for $\PZ u_h$ follows from
 \eqref{eq:PDZLTwoError}, Theorem~\ref{thm:uhLTwoError}
 and the relation
\begin{align*}
  \|u-\PZ u_h\|_{L_2(\O)}&\leq \|u-\PZ u\|_{L_2(\O)}
  +\|\PZ(u-u_h)\|_{L_2(\O)}\\
     &\leq \|u-\PZ u\|_{L_2(\O)}+\|u-u_h\|_{L_2(\O)}.
\end{align*}
\par
 For the approximation $\PO u_h$, we have
\begin{equation}\label{eq:LTwoPODError1}
 \|u-\PO u_h\|_{L_2(\O)}\leq\|u-\PO\Ih u\|_{L_2(\O)}
 +\|\PO(\Ih u-u_h)\|_{L_2(\O)}
\end{equation}
 and, in view of \eqref{eq:Trace}, \eqref{eq:tbarNormDef},
  and Lemma~\ref{lem:PODStability2},
\begin{align}\label{eq:LTwoPODError2}
\|\PO(\Ih u-u_h)\|_{L_2(\O)}^2&\lesssim
\sum_{D\in\cT_h}\tbar \ID u-u_h\tbar_{k,D}^2\notag\\
    &\lesssim \sum_{D\in\cT_h}\big(\hD\|\ID u-u_h\|_{L_2(\p D)}^2
    +\|\PDZmt (\ID u-u_h)\|_\LTD^2\big)\\
    &\lesssim\sum_{D\in\cT_h} \big(\|\ID u-u_h\|_\LTD^2
    +\hD^2|\ID u-u_h|_{\HOD}^2\big)\notag\\
    &\lesssim \|u-u_h\|_{L_2(\O)}^2+ \|u-\Ih u\|_{L_2(\O)}^2
    +h^2|u-\Ih u|_{H^1(\O)}^2\notag\\
       &\hspace{50pt}+h^2|u-u_h|_{H^1(\O)}^2.\notag
\end{align}
\par
 The estimate for $\PO u_h$ follows from
   Lemma~\ref{lem:InterpolationError},
 Theorem~\ref{thm:ComputableEnergyError},
 Theorem~\ref{thm:uhLTwoError}
 and \eqref{eq:LTwoPODError1}--\eqref{eq:LTwoPODError2}.
\end{proof}
%
%%%%%%%%%%%%%%%%%%%%%%%%
\subsection{Error Estimates for $u_h$ in the $L_\infty$ Norm}\label{subsec:uhLInftyError}
 Here we consider a $L_\infty$ error estimate for $u_h$ over the edges of $\cT_h$,
 where $u_h$ is computable.
 We will treat the two choices of $S^D(\cdot,\cdot)$ separately.  The set of all the
 edges in $\cT_h$ will be denoted by $\cE_h$.
\subsubsection{The Case where $S^D(\cdot,\cdot)=S^D_2(\cdot,\cdot)$}
\label{eq:subsubsec:uhLInftySD2}
 We have the following result for this choice of $S^D(\cdot,\cdot)$.
\begin{theorem}\label{thm:MaxNormBdryEst2}
 Assuming that
 the solution $u$ of \eqref{eq:Poisson}
  belongs to $\in H^{\ell+1}(\O)$ for some $\ell$ between $1$ and $k$,
 we have
\begin{equation*}
  \max_{e\in\cE_h}\|u-u_h\|_{L_\infty(e)}\leq C
  h^{\ell}\unorm,
\end{equation*}
 where the positive constant $C$ only depends on $\rho$, $N$ and  $k$.
\end{theorem}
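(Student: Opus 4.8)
The plan is to compare $u_h$ with the interpolant $\Ih u$ and to exploit that $\alpha_h=1$ for $S^D(\cdot,\cdot)=S^D_2(\cdot,\cdot)$. Fix an edge $e\in\cE_h$ and let $D$ be a subdomain with $e\subset\p D$. First I would write $u-u_h=(u-\Ih u)+(\Ih u-u_h)$ and bound the interpolation part by Lemma~\ref{lem:1DInterpolationError}, so that $\|u-\Ih u\|_{L_\infty(e)}\le\|u-\Ih u\|_{\LinfD}\lesssim\hD^{\ell}|u|_{H^{\ell+1}(D)}\le h^{\ell}\unorm$. It then remains to estimate $w_h:=\Ih u-u_h\in\cQh$ on $e$. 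Since $\alpha_h=1$, the triangle inequality together with \eqref{eq:InterpolationError} and Theorem~\ref{thm:ConcreteEnergyError} gives the global energy bound $\|w_h\|_h\le\|u-\Ih u\|_h+\|u-u_h\|_h\lesssim h^{\ell}\unorm$; in particular $|\POD w_h|_{\HOD}\le\|w_h\|_h$ and $\hD^{1/2}\|\p(w_h-\POD w_h)/\p s\|_{L_2(\p D)}\le\|w_h\|_h$ for every $D$, directly from the definitions \eqref{eq:ah} and \eqref{eq:SD2}.

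Next I would split $w_h$ on the closed curve $\p D$ into its mean and its fluctuation, $m_D:=\frac{1}{|\p D|}\int_{\p D}w_h\,ds$ and $w_h-m_D$. Because $\int_{\p D}(w_h-m_D)\,ds=0$, a one-dimensional Sobolev inequality on $\p D$ (a curve of length $\approx\hD$) yields $\|w_h-m_D\|_{L_\infty(\p D)}\lesssim\hD^{1/2}\|\p w_h/\p s\|_{L_2(\p D)}$. The tangential derivative is controlled uniformly in $D$: writing $\p w_h/\p s=\p(w_h-\POD w_h)/\p s+\p(\POD w_h)/\p s$, the first piece obeys $\hD^{1/2}\|\p(w_h-\POD w_h)/\p s\|_{L_2(\p D)}\le\|w_h\|_h$ and the second obeys $\hD^{1/2}\|\p(\POD w_h)/\p s\|_{L_2(\p D)}\lesssim|\POD w_h|_{\HOD}\le\|w_h\|_h$ by \eqref{eq:DiscreteEstimate0} applied to $\nabla\POD w_h\in\bbP_{k-1}$. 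Hence $\|w_h-m_D\|_{L_\infty(\p D)}\lesssim h^{\ell}\unorm$ with a constant independent of $e$ and $D$, which disposes of the fluctuation part of $\|w_h\|_{L_\infty(e)}$.

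The remaining and hardest step is the uniform control of the constant mode $m_D$. I would first compare it with the volume average: since $w_h-m_D\in\cQD$, Lemma~\ref{lem:MaximumPrinciple} and the fluctuation bound give $\|w_h-m_D\|_{\LinfD}\lesssim\|w_h-m_D\|_{L_\infty(\p D)}+|w_h|_{\HOD}\lesssim h^{\ell}\unorm$, so $m_D$ differs from $\frac{1}{|D|}\int_D w_h\,dx$ by at most $h^{\ell}\unorm$; by Cauchy--Schwarz and \eqref{eq:G1} this yields $|m_D|\lesssim\hD^{-1}\|w_h\|_\LTD+h^{\ell}\unorm$. For subdomains with $\hD\approx h$ the global $L_2$ estimate $\|w_h\|_{L_2(\O)}\lesssim h^{\ell+1}\unorm$ — obtained from Theorem~\ref{thm:uhLTwoError} (again with $\alpha_h=1$) and \eqref{eq:LTwoPODID} — closes the bound. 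The genuine obstacle is to establish $\hD^{-1}\|w_h\|_\LTD\lesssim h^{\ell}\unorm$ \emph{uniformly}, that is also on subdomains whose diameter is much smaller than $h$, where the global $L_2$ bound alone is insufficient; I expect to handle these either by a localized $L_2$ error estimate or by anchoring $m_D$ to the Dirichlet boundary (where $w_h$ vanishes at the boundary nodes, by Lemma~\ref{lem:BdryEst} and $w_h\in H^1_0(\O)$) and propagating it with the uniform tangential-derivative control, while ensuring that no logarithmic factor is accumulated so that the final bound is $\max_{e\in\cE_h}\|u-u_h\|_{L_\infty(e)}\lesssim h^{\ell}\unorm$.
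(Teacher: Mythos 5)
Your steps 1--3 (peeling off $u-\Ih u$ via Lemma~\ref{lem:1DInterpolationError}, the energy bound for $w_h=\Ih u-u_h$, and the control of the \emph{oscillation} of $w_h$ on each $\p D$ through the tangential derivative) are correct, but they handle only the easy part of the statement. The whole content of the theorem is the step you leave open: uniform control of the constant mode $m_D$ on subdomains with $\hD\ll h$, which the hypotheses here explicitly allow --- quasi-uniformity is \emph{not} assumed in this theorem (it is invoked only for $S^D_1$ in Theorem~\ref{thm:MaxNormBdryEst1}). Neither of your suggested repairs closes this gap as sketched. A localized $L_2$ estimate is not available from the paper's toolkit: Theorem~\ref{thm:uhLTwoError} is global, and it only gives $\hD^{-1}\|w_h\|_\LTD\lesssim (h/\hD)\,h^{\ell}\unorm$, which is useless when $\hD\ll h$. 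And propagating the value of $w_h$ inward from $\p\O$ using your \emph{per-element uniform} bound $\hD^{1/2}\|\p w_h/\p s\|_{L_2(\p D)}\lesssim h^{\ell}\unorm$ fails quantitatively: a chain of edges joining $\p\O$ to a subdomain $D$ near the center of $\O$ crosses at least on the order of $1/h$ subdomains, so summing per-element contributions loses a factor $h^{-1}$ (still $h^{-1/2}$ if you apply Cauchy--Schwarz along the chain to the per-element bounds), destroying the rate.

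The paper's proof consists precisely of the argument you are missing, and once it is in place your two decompositions become unnecessary. It works directly with $v=u-u_h$ (no interpolant, no mean/fluctuation splitting): since $v\in H^1_0(\O)$ vanishes on $\p\O$, any point of any edge $e\in\cE_h$ can be joined to $\p\O$ by a path along mesh edges, and the fundamental theorem of calculus together with Cauchy--Schwarz yields
\begin{equation*}
  \|u-u_h\|_{L_\infty(e)}^2\;\lesssim\;
  \SumD\sum_{e'\in\ED}\hD\,\|\p (u-u_h)/\p s\|_{L_2(e')}^2 ,
\end{equation*}
i.e.\ the pointwise value is estimated against the \emph{globally summed}, length-weighted square sum of tangential derivatives over \emph{all} edges of the mesh, rather than by per-element suprema multiplied by the number of elements crossed. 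This global quantity is then bounded by $h^{2\ell}\unorm^2$ in exactly the two pieces you already identified locally: the contribution of $(u-u_h)-\POD(u-u_h)$ is the $S^D_2$-stabilization content of $\|u-u_h\|_h^2$ and is controlled by Theorem~\ref{thm:ConcreteEnergyError} with $\alpha_h=1$, while the contribution of $\POD(u-u_h)$ is controlled by \eqref{eq:Trace}, \eqref{eq:DiscreteEstimate1}, \eqref{eq:PODStability1} and Theorem~\ref{thm:ComputableEnergyError}. So you assembled the right ingredients, but the central idea of the proof --- converting the summed tangential-derivative bound into a pointwise bound by integrating along edge paths anchored at the Dirichlet boundary --- is absent, and it is not a technicality that can be waved through.
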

\begin{proof} First we observe that,
  by \eqref{eq:SD2}, \eqref{eq:ah} and Theorem~\ref{thm:ConcreteEnergyError},
\begin{equation}\label{eq:EdgeEstimate}
 \SumD\sum_{e\in\ED}\hD\|\p[(u-u_h)-\POD (u-u_h)]/\p s\|_{L_2(e)}^2
 \lesssim \|u-u_h\|_h^2\lesssim h^{2\ell}\unorm^2.
\end{equation}
\par
 We can connect any point in $e\in\cE_h$
 to $\p\O$, where $u-u_h=0$, by a path along the edges in $\cE_h$.  Therefore it follows from
 a direct calculation (or a Sobolev inequality in one variable)
 that
\begin{align}\label{eq:PathEstimate}
  \|u-u_h\|_{L_\infty(e)}^2&\lesssim \SumD\sum_{e\in\ED}
   \hE\|\p(u-u_h)/\p s\|_{L_2(e)}^2\notag\\
    &\lesssim \SumD\sum_{e\in\ED}\hD\|\p[(u-u_h)-\POD (u-u_h)]/\p s\|_{L_2(e)}^2\\
    &\hspace{40pt}+\SumD\sum_{e\in\ED}
   \hD\|\p[\POD(u-u_h)]/\p s\|_{L_2(e)}^2\qquad\qquad\forall\,e\in\cE_h,\notag
\end{align}
 and we have,
 by \eqref{eq:Trace}, \eqref{eq:DiscreteEstimate1},
 \eqref{eq:PODStability1} and Theorem~\ref{thm:ComputableEnergyError},
\begin{align}\label{eq:TrickyEstimate}
  &\SumD\sum_{e\in\ED} \hD\|\p[\POD(u-u_h)]/\p s\|_{L_2(e)}^2\notag\\
  &\hspace{80pt}\lesssim \SumD \big(|\POD(u-u_h)|_\HOD^2+\hD^2|\POD(u-u_h)|_\HTD^2\big)\\
  &\hspace{80pt}\lesssim\SumD |\POD(u-u_h)|_\HOD^2\lesssim  h^{2\ell}\unorm^2.
  \notag
\end{align}
\par
 The estimates \eqref{eq:EdgeEstimate}--\eqref{eq:TrickyEstimate} together imply
\begin{equation*}
   \|u-u_h\|_{L_\infty(e)}\leq h^\ell\unorm \qquad\forall\,e\in\cE_h.
\end{equation*}
\end{proof}
%
%%%%%%%%%%%%%%%%%%%%%%%%%%%%%%%%%%%%%
\subsubsection{The Case where $S^D(\cdot,\cdot)=S^D_1(\cdot,\cdot)$}
\label{eq:subsubsec:uhLInftySD1}
 We will establish an analog of Theorem~\ref{thm:MaxNormBdryEst2} under the
 additional assumption that
 $\cT_h$ is quasi-uniform, i.e., there exists a positive constant
 $\gamma$  independent of $h$ such that
\begin{equation}\label{eq:gamma}
  \hD\geq \gamma h \qquad\forall\,\gamma\in\cT_h.
\end{equation}
\begin{theorem}\label{thm:MaxNormBdryEst1}
 Assuming $\cT_h$ is quasi-uniform and the solution $u$ of
 \eqref{eq:Poisson} belongs to $H^{\ell+1}(\O)$ for some $\ell$ between $1$ and $k$,
 we have
\begin{equation*}
  \max_{e\in\cE_h}\|u-u_h\|_{L_\infty(e)}\leq
  C\ln(1+\max_{D\in\cT_h}\tD)
  h^{\ell}\unorm,
\end{equation*}
 where the positive constant $C$ only depends on $\rho$,
 $N$, $\gamma$ and  $k$.
\end{theorem}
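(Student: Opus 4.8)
The plan is to follow the strategy of the proof of Theorem~\ref{thm:MaxNormBdryEst2}, but to replace the propagation of tangential derivatives along a path (which is controlled by $S^D_2$ but not by $S^D_1$) with an inverse estimate that exploits the quasi-uniformity \eqref{eq:gamma}, so that the already-established $L_2$ error estimate supplies the missing power of $h$. Fix an edge $e\in\cE_h$ and a subdomain $D$ with $e\subset\p D$. The starting point is the algebraic identity
\[
 u-u_h=(u-\POD u)-(u_h-\POD u_h)+\POD(u-u_h),
\]
which splits $u-u_h$ on $e$ into an approximation part, a discrete part whose trace on $e$ is a polynomial, and the polynomial projection $\POD(u-u_h)$. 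I would bound the $L_\infty(e)$ norm of each of the three pieces, take the maximum over $e$, and show that each contribution is $\lesssim\alpha_h h^\ell\unorm$ with $\alpha_h=\ln(1+\max_{D\in\cT_h}\tD)$ as in \eqref{eq:kappah}.

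For the first piece, $\|u-\POD u\|_{L_\infty(e)}\le\|u-\POD u\|_\LinfD$, and the Sobolev inequality \eqref{eq:Sobolev} together with Lemma~\ref{lem:PODErrors} gives $\|u-\POD u\|_\LinfD\lesssim h^\ell|u|_{H^{\ell+1}(D)}$, so this term is $\lesssim h^\ell\unorm$ with no logarithmic factor. For the second piece, since $u_h|_e$ and $\POD u_h|_e$ are both polynomials of degree $\le k$ on $e$, the trace of $u_h-\POD u_h$ on $e$ is polynomial, and by Remark~\ref{rem:NormEquivalence} its $L_\infty(e)$ norm is controlled by its nodal values on $e$. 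Writing $(u_h-\POD u_h)(p)=(u-\POD u)(p)-[(u-u_h)-\POD(u-u_h)](p)$ at each $p\in\NPD$, I would bound the nodal values of $(u-u_h)-\POD(u-u_h)$ by the stabilization $S^D_1$, whose sum over $\cT_h$ is part of $\|u-u_h\|_h^2$ and is therefore $\lesssim\alpha_h h^{2\ell}\unorm^2$ by Theorem~\ref{thm:ConcreteEnergyError}, while the nodal values of $u-\POD u$ are dominated by $\|u-\POD u\|_\LinfD$ as above. Summing over $D$ yields $\max_e\|u_h-\POD u_h\|_{L_\infty(e)}\lesssim\sqrt{\alpha_h}\,h^\ell\unorm$, which is $\lesssim\alpha_h h^\ell\unorm$ since $\alpha_h\geq\ln 2$.

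The third piece is the main step, and it is where quasi-uniformity \eqref{eq:gamma} is used. As $\POD(u-u_h)$ is a polynomial on $D$, the inverse estimate \eqref{eq:DS2} (with $d=2$) gives $\|\POD(u-u_h)\|_{L_\infty(e)}\le\|\POD(u-u_h)\|_\LinfD\lesssim\hD^{-1}\|\POD(u-u_h)\|_\LTD$. Quasi-uniformity makes $\hD^{-1}\lesssim h^{-1}$ uniformly, so bounding the maximum over $D$ by the square root of the sum over $D$ yields $\max_e\|\POD(u-u_h)\|_{L_\infty(e)}\lesssim h^{-1}\|\PO(u-u_h)\|_{L_2(\O)}$. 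I would then estimate $\|\PO(u-u_h)\|_{L_2(\O)}\le\|u-\PO u\|_{L_2(\O)}+\|u-\PO u_h\|_{L_2(\O)}\lesssim\alpha_h h^{\ell+1}\unorm$, using \eqref{eq:LTwoPOD} for the first term and the computable $L_2$ estimate of Theorem~\ref{thm:ComputableLTwoErrors} for the second. Multiplying by $h^{-1}$ gives $\lesssim\alpha_h h^\ell\unorm$, which carries the full first-power logarithmic factor. Combining the three bounds and taking the maximum over $e\in\cE_h$ completes the proof, with the constant depending on $\rho$, $N$, $\gamma$ and $k$.

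I expect the third step to be the crux. One must trade the $L_\infty$ norm of the polynomial projection for its $L_2$ norm at a uniform cost $h^{-1}$, which is exactly what quasi-uniformity provides, and then recognize that the extra power of $h$ in the $L_2$ error estimate of Theorem~\ref{thm:ComputableLTwoErrors} absorbs this $h^{-1}$ while transmitting the factor $\alpha_h$. This explains both why the bound carries $\ln(1+\max_{D\in\cT_h}\tD)$ rather than its square root and why quasi-uniformity is needed here, in contrast to Theorem~\ref{thm:MaxNormBdryEst2}, where the tangential-derivative control furnished by $S^D_2$ allows a path argument that avoids any global inverse estimate.
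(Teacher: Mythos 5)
Your proof is correct, and it rests on the same three mechanisms as the paper's proof, but it is organized around a different decomposition. The paper writes $u-u_h=(u-\ID u)+\big[(\ID u-u_h)-\POD(\ID u-u_h)\big]+\POD(\ID u-u_h)$ on each $D$: the first piece is handled by the $L_\infty$ interpolation error estimate \eqref{eq:1DInterpolationError}, the second by Remark~\ref{rem:NormEquivalence} together with $\|\Ih u-u_h\|_h\lesssim\sqrt{\alpha_h}\,h^\ell\unorm$ (this is \eqref{eq:SD1Edge1}), and the third---exactly your crux step---by $\|\POD(\ID u-u_h)\|_{L_\infty(\p D)}\lesssim \hD^{-1}\|\POD(\ID u-u_h)\|_\LTD+|\POD(\ID u-u_h)|_\HOD$ followed by \eqref{eq:LTwoPODID}, Theorem~\ref{thm:ComputableLTwoErrors} and quasi-uniformity (this is \eqref{eq:SD1Edge2}). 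You instead use $u-u_h=(u-\POD u)-(u_h-\POD u_h)+\POD(u-u_h)$, which bypasses the interpolant entirely: your first piece is bounded by \eqref{eq:Sobolev} and Lemma~\ref{lem:PODErrors} instead of Lemma~\ref{lem:1DInterpolationError}, and your second piece needs the small extra observation that $S^D_1$ makes sense for the continuous function $(u-u_h)-\POD(u-u_h)$ and that its sum over $\cT_h$ sits inside $\|u-u_h\|_h^2$ (legitimate, since $u\in H^2(\O)\subset C(\bar\O)$ in two dimensions and both constituents of $a_h$ are nonnegative), whereas the paper only ever evaluates the stabilization on the genuine virtual element function $\Ih u-u_h\in\cQh$. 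The trade-off is minor: your route is slightly more economical in the lemmas it invokes, while the paper's keeps all stabilization terms on discrete functions. The essential point is identical in both arguments, and you identify it correctly: quasi-uniformity \eqref{eq:gamma} converts the $h^{-1}$ from the polynomial inverse estimate into the extra power of $h$ supplied by the $L_2$ estimate of Theorem~\ref{thm:ComputableLTwoErrors}, and this is precisely where the full factor $\ln(1+\max_{D\in\cT_h}\tD)$, rather than its square root, enters the final bound.
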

\begin{proof} Let $D\in\cT_h$ be arbitrary.
 First we observe that,
 by \eqref{eq:SD1}, Remark~\ref{rem:NormEquivalence},
 \eqref{eq:ah}, \eqref{eq:InterpolationError},
 Theorem~\ref{thm:ConcreteEnergyError} and
 Theorem~\ref{thm:ComputableEnergyError},
\begin{align}\label{eq:SD1Edge1}
 &\|(\ID u-u_h)-\POD(\ID u-u_h)\|_{L_\infty(\p D)}\notag\\
 &\hspace{80pt}\lesssim \|\Ih u-u_h\|_h\\
 &\hspace{80pt}
 \lesssim \|\Ih u-u\|_h+\|u-u_h\|_h
  \lesssim [\ln(1+\max_{D\in\cT_h}\tD)]^\frac12h^\ell\unorm.\notag
\end{align}
\par
 Furthermore, it follows from \eqref{eq:G2}, \eqref{eq:DiscreteEstimate3},
 \eqref{eq:PODStability1}, \eqref{eq:LTwoPODID},
  Theorem~\ref{thm:ConcreteEnergyError},
 Theorem~\ref{thm:ComputableLTwoErrors} and \eqref{eq:gamma} that
\begin{align}\label{eq:SD1Edge2}
  &\|\POD(\ID u-u_h)\|_{L_\infty(\p D)}\lesssim
  \hD^{-1}\|\POD(\ID u-u_h)\|_\LTD+
   |\POD(\ID u-u_h)|_\HOD\notag\\
   &\hspace{60pt}\lesssim \hD^{-1}\big(\|\POD\ID u-u\|_\LTD
   +\|u-\POD u_h\|_\LTD\big)+|\ID u-u_h|_\HOD\\
   &\hspace{60pt}\lesssim \ln(1+\max_{D\in\cT_h}\tD) h^\ell\unorm.\notag
\end{align}
\par
 The theorem follows from \eqref{eq:1DInterpolationError},
 \eqref{eq:SD1Edge1}, \eqref{eq:SD1Edge2} and the triangle inequality.
\end{proof}
%
%%%%%%%%%%%%%%%%%%%%%%%%%%%%%%%%%%%%%%%%%%%%%%%%%%%%%%%%%%%%%%%%%
\subsection{Error Estimates for $\PO u_h$ and $\PZ u_h$ in the $L_\infty$ Norm}
\label{subsec:ComputableLInftyError}
\par
 Again we treat the two choices of $S^D(\cdot,\cdot)$ separately.
%%%%%%%%%%%%%%%%%%%%%%%%%%
\subsubsection{The Case where $S^D(\cdot,\cdot)=S^D_2(\cdot,\cdot)$}
\label{eq:subsubsec:ComputableLInftySD2}
 For this choice of $S^D(\cdot,\cdot)$, we can establish the
 following result without
 assuming that $\cT_h$ is quasi-uniform.
\begin{theorem}\label{thm:LInftyErrorsSD2}
  Assuming the solution $u$ of \eqref{eq:Poisson} belongs to
  $H^{\ell+1}(\O)$ for some $\ell$ between $1$ and $k$,
 there exists a positive constant $C$, depending only on $\rho$,
 $N$ and $k$, such that
\begin{equation*}
 \|u-\PO u_h\|_{L_\infty(\O)}+\|u-\PZ u_h\|_{L_\infty(\O)}
  \leq C  h^{\ell}\unorm.
\end{equation*}
\end{theorem}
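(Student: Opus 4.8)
The plan is to prove the bound element by element, writing $\|u-\PO u_h\|_{L_\infty(\O)}=\max_{D\in\cT_h}\|u-\POD u_h\|_{\LinfD}$ and similarly for $\PZ u_h$, and to exploit that for $S^D(\cdot,\cdot)=S^D_2(\cdot,\cdot)$ we have $\alpha_h=1$, so that Theorem~\ref{thm:ComputableEnergyError} gives $|u-u_h|_{H^1(\O)}\lesssim h^\ell\unorm$ and Theorem~\ref{thm:MaxNormBdryEst2} gives $\|u-u_h\|_{L_\infty(\p D)}\lesssim h^\ell\unorm$ for every $D$, with no quasi-uniformity. On each $D$ I split the error into the projection of the \emph{exact} solution and the projection of the \emph{discrete} error. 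For the first I use that $\|u-\POD u\|_{\LinfD}$ and $\|u-\PDZ u\|_{\LinfD}$ are both $\lesssim h^\ell\unorm$: this follows from the Sobolev inequality \eqref{eq:Sobolev} (with $d=2$) applied to $u-\POD u$ (resp. $u-\PDZ u$) together with Lemma~\ref{lem:PODErrors} (resp. \eqref{eq:PDZLTwoError}, \eqref{eq:PDZHOneError} and \eqref{eq:PDZHTwoError}), since the three resulting terms $\hD^{-1}\|\cdot\|_\LTD$, $|\cdot|_\HOD$ and $\hD|\cdot|_{\HTD}$ each contribute $\hD^\ell\unorm\le h^\ell\unorm$.

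For the discrete piece in the $\PO u_h$ case, $\POD(u-u_h)=\POD u-\POD u_h\in\bbP_k$, so I apply the discrete maximum estimate \eqref{eq:DiscreteEstimate2} (whose factor $\hD^{1-(d/2)}$ equals $1$ for $d=2$) to obtain $\|\POD(u-u_h)\|_{\LinfD}\lesssim|\overline{\POD(u-u_h)}_{\p D}|+|\POD(u-u_h)|_\HOD$. The boundary-mean term is controlled because $\POD$ preserves the boundary integral by \eqref{eq:POD2}, so $\overline{\POD(u-u_h)}_{\p D}=\overline{(u-u_h)}_{\p D}$, which is bounded by $\|u-u_h\|_{L_\infty(\p D)}\lesssim h^\ell\unorm$ via Theorem~\ref{thm:MaxNormBdryEst2}; the seminorm term satisfies $|\POD(u-u_h)|_\HOD\le|u-u_h|_\HOD\le|u-u_h|_{H^1(\O)}\lesssim h^\ell\unorm$ by \eqref{eq:PODStability1} and Theorem~\ref{thm:ComputableEnergyError}. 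Summing the two pieces and taking the maximum over $D$ gives the estimate for $\PO u_h$.

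For $\PZ u_h$ the same splitting gives $u-\PDZ u_h=(u-\PDZ u)+\PDZ(u-u_h)$, and I treat $\PDZ(u-u_h)\in\bbP_k$ by \eqref{eq:DiscreteEstimate3}, which this time produces the \emph{element} mean: $\|\PDZ(u-u_h)\|_{\LinfD}\lesssim|\overline{\PDZ(u-u_h)}_D|+|\PDZ(u-u_h)|_\HOD$. The seminorm term is again $\lesssim h^\ell\unorm$ by the $H^1$-stability of $\PDZ$ in Lemma~\ref{lem:PDZHOne} and Theorem~\ref{thm:ComputableEnergyError}, and since $\PDZ$ reproduces constants we have $\overline{\PDZ(u-u_h)}_D=\overline{(u-u_h)}_D$. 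The main obstacle is precisely this element mean of $u-u_h$: the naive bound $|\overline{(u-u_h)}_D|\lesssim\hD^{-1}\|u-u_h\|_\LTD$ combined with the $L_2$ estimate of Theorem~\ref{thm:uhLTwoError} only yields $(h/\hD)h^\ell\unorm$, which is useless without quasi-uniformity. I will instead convert the element mean into boundary and gradient data by the identity obtained from the test polynomial $q(x)=|x-x_0|^2/4$ (so that $\Delta q\equiv1$ in $\R^2$, with $x_0$ the center of $\fBD$): for any $w\in\HOD$, $\int_D w\,dx=\int_{\p D}w\,\p_n q\,ds-\int_D\nabla w\cdot\nabla q\,dx$, and since $|\nabla q|\lesssim\hD$ on $\bar D$ while $|D|\approx\hD^2$ and $|\p D|\approx\hD$ by \eqref{eq:G1}--\eqref{eq:G2}, this yields the shape-regular bound $|\overline{w}_D|\lesssim\|w\|_{L_\infty(\p D)}+|w|_\HOD$. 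Applying it to $w=u-u_h$ gives $|\overline{(u-u_h)}_D|\lesssim\|u-u_h\|_{L_\infty(\p D)}+|u-u_h|_\HOD\lesssim h^\ell\unorm$ by Theorem~\ref{thm:MaxNormBdryEst2} and Theorem~\ref{thm:ComputableEnergyError}, with no quasi-uniformity needed. Combining this with the first piece and maximizing over $D\in\cT_h$ completes the estimate for $\PZ u_h$, and together with the $\PO u_h$ bound establishes the theorem.
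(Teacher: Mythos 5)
Your proof is correct, and it rests on the same pillars as the paper's own argument: elementwise estimates, the projection error bounds, the energy estimate of Theorem~\ref{thm:ComputableEnergyError} (with $\alpha_h=1$ for $S^D_2$), and --- crucially --- the boundary max-norm estimate of Theorem~\ref{thm:MaxNormBdryEst2}, which is what lets both arguments avoid quasi-uniformity. The organization differs, however. The paper applies the Sobolev inequality \eqref{eq:Sobolev} to the full error $u-\POD u_h$ (resp.\ $u-\PDZ u_h$), removes the $H^2$ term of the polynomial part with the inverse estimate \eqref{eq:DiscreteEstimate1}, and reduces the dangerous term $\hD^{-1}\|u-\POD u_h\|_\LTD$ by the triangle inequality plus the Poincar\'e--Friedrichs inequalities \eqref{eq:PF1}--\eqref{eq:PF2} and \eqref{eq:G2}, using \eqref{eq:POD2} (resp.\ orthogonality to constants) so that the boundary (resp.\ volume) integrals either vanish or convert to $\|u-u_h\|_{L_\infty(\p D)}$. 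You instead split $u-\POD u_h=(u-\POD u)+\POD(u-u_h)$ at the outset, treat the first piece by \eqref{eq:Sobolev} and Lemma~\ref{lem:PODErrors} (resp.\ Lemma~\ref{lem:PDZErrors}), and treat the polynomial piece by the mean-based discrete estimates \eqref{eq:DiscreteEstimate2} (resp.\ \eqref{eq:DiscreteEstimate3}), transferring the means to $u-u_h$ via \eqref{eq:POD2} (resp.\ mean preservation of $\PDZ$). The two routes are equivalent in substance, and your version makes the role of the mean-preservation properties particularly transparent; your correct observation that the naive pairing of the element mean with Theorem~\ref{thm:uhLTwoError} loses a factor $h/\hD$ is exactly the issue the paper's use of \eqref{eq:PF1}--\eqref{eq:PF2} is designed to dodge. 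The one inefficiency: your Green's identity computation with $q(x)=|x-x_0|^2/4$ re-derives from scratch what \eqref{eq:PF2} together with \eqref{eq:G1}--\eqref{eq:G2} already yields, namely that the mean of $w$ over $D$ is bounded (with a constant depending only on $\rD$) by $\|w\|_{L_\infty(\p D)}+|w|_{\HOD}$; citing \eqref{eq:PF2} would have shortened your $\PZ u_h$ case to a line, and this is in effect what the paper does.
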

\begin{proof}  For any $D\in\cT_h$, we have,  by \eqref{eq:Sobolev},
 \eqref{eq:DiscreteEstimate1} and \eqref{eq:PODStability1},
\begin{align*}
 \|u-\POD u_h\|_{\LinfD}
 &\lesssim
 \hD^{-1}\|u-\POD u_h\|_\LTD+|u-\POD u_h|_\HOD+\hD|u-\POD u_h|_\HTD\\
 &\lesssim \hD^{-1}\|u-\POD u_h\|_\LTD+|u-\POD u_h|_\HOD+\hD|u-\POD u|_\HTD\\
             &\hspace{50pt}+  \hD|\POD (u-u_h)|_\HTD\\
  &\lesssim \hD^{-1}\|u-\POD u_h\|_\LTD+|u-\POD u_h|_\HOD+\hD|u-\POD u|_\HTD\\
  &\hspace{40pt}+|u-u_h|_\HOD,
\end{align*}
 and
\begin{align*}
  \hD^{-1}\|u-\POD u_h\|_\LTD&\leq \hD^{-1}\|u-u_h\|_\LTD+
      \hD^{-1}\|u_h-\POD u_h\|_\LTD\\
      &\lesssim \big(\|u-u_h\|_{L_\infty(\p D)}+|u-u_h|_\HOD\big)
        +|u_h-\POD u_h|_\HOD
\end{align*}
 by \eqref{eq:G2}, \eqref{eq:PF2} and \eqref{eq:POD2}.  These two
 estimates together with
 Lemma~\ref{lem:PODErrors},
 Theorem~\ref{thm:ComputableEnergyError} and
 Theorem~\ref{thm:MaxNormBdryEst2} imply
 the estimate for $u-\PO u_h$.
\par
 The estimate for $u-\PZ u_h$ can be derived similarly.
 We have, by \eqref{eq:Sobolev},
 \eqref{eq:DiscreteEstimate1} and
 \eqref{eq:PDZHOne},
\begin{align*}
 \|u-\PDZ u_h\|_{\LinfD}
 &\lesssim
 \hD^{-1}\|u-\PDZ u_h\|_\LTD+|u-\PDZ u_h|_\HOD+\hD|u-\PDZ u|_\HTD\\
 &\hspace{40pt}+|u-u_h|_\HOD,
\end{align*}
 and
\begin{align*}
  \hD^{-1}\|u-\PDZ u_h\|_\LTD&\leq \hD^{-1}\|u-u_h\|_\LTD+
           \hD^{-1}\|u_h-\PDZ u_h\|_\LTD\\
  &\lesssim \big(\|u-u_h\|_{L_\infty(\p D)}+|u-u_h|_\HOD\big)+
  |u_h-\PDZ u_h|_\HOD
\end{align*}
 by \eqref{eq:G2}, \eqref{eq:PF1} and \eqref{eq:PF2}.
 The estimate for $u-\PZ u_h$ now follows from
 Lemma~\ref{lem:PDZErrors},
  Theorem~\ref{thm:ComputableEnergyError} and
 Theorem~\ref{thm:MaxNormBdryEst2}.
\end{proof}
%
%%%%%%%%%%%%%%%%%%%%%%%%%%
\subsubsection{The Case where $S^D(\cdot,\cdot)=S^D_1(\cdot,\cdot)$}
\label{eq:subsubsec:ComputableLInftySD1}
 The following analog of Theorem~\ref{thm:LInftyErrorsSD2} is
 proved by the same arguments but with
 Theorem~\ref{thm:MaxNormBdryEst2} replaced by
 Theorem~\ref{thm:MaxNormBdryEst1}.
\begin{theorem}\label{thm:LInftyErrorsSD1}
  Assuming $\cT_h$ is quasi-uniform and the solution $u$ of \eqref{eq:Poisson}
  belongs to $H^{\ell+1}(\O)$ for some $\ell$ between $1$ and $k$,
 there exists a positive constant $C$, depending only on $\rho$,
  $N$, $\gamma$ and $k$, such that
\begin{equation*}
 \|u-\PO u_h\|_{L_\infty(\O)}+\|u-\PZ u_h\|_{L_\infty(\O)}
  \leq C \ln(1+\max_{D\in\cT_h}\tD)  h^{\ell}\unorm.
\end{equation*}
\end{theorem}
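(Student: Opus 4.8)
The plan is to reproduce the proof of Theorem~\ref{thm:LInftyErrorsSD2} essentially line for line, the only change being that the maximum-norm boundary estimate supplied there by Theorem~\ref{thm:MaxNormBdryEst2} is now supplied by its quasi-uniform counterpart Theorem~\ref{thm:MaxNormBdryEst1}; this is the single place where the quasi-uniformity hypothesis \eqref{eq:gamma} is used and where the logarithmic factor $\ln(1+\max_{D\in\cT_h}\tD)$ is injected. Throughout I would fix an arbitrary $D\in\cT_h$, bound $\|u-\POD u_h\|_{\LinfD}$ (and later $\|u-\PDZ u_h\|_{\LinfD}$), and then pass to $\|\cdot\|_{L_\infty(\O)}=\max_{D\in\cT_h}\|\cdot\|_{\LinfD}$.

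For the local estimate I would first apply the Sobolev inequality \eqref{eq:Sobolev}, the inverse estimate \eqref{eq:DiscreteEstimate1} to absorb the term $\hD|\POD u_h|_\HTD$, and the stability \eqref{eq:PODStability1}, obtaining
\begin{equation*}
 \|u-\POD u_h\|_{\LinfD}\lesssim \hD^{-1}\|u-\POD u_h\|_\LTD+|u-\POD u_h|_\HOD+\hD|u-\POD u|_\HTD+|u-u_h|_\HOD.
\end{equation*}
The first term is then split by inserting $u_h$ and using \eqref{eq:G2}, \eqref{eq:POD2} together with the Poincar\'e--Friedrichs inequality \eqref{eq:PF2}:
\begin{equation*}
 \hD^{-1}\|u-\POD u_h\|_\LTD\lesssim \|u-u_h\|_{L_\infty(\p D)}+|u-u_h|_\HOD+|u_h-\POD u_h|_\HOD.
\end{equation*}

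It remains to insert the known bounds. The term $\hD|u-\POD u|_\HTD$ is $\lesssim h^\ell\unorm$ by \eqref{eq:HTwoPOD} of Lemma~\ref{lem:PODErrors} and carries no logarithm; the seminorms $|u-u_h|_\HOD$, $|u-\POD u_h|_\HOD$ and $|u_h-\POD u_h|_\HOD$ are each dominated by their piecewise-$H^1$ counterparts and hence, via Theorem~\ref{thm:ComputableEnergyError} with $\alpha_h=\ln(1+\max_{D\in\cT_h}\tD)$ (see \eqref{eq:kappah}), by $\alpha_h h^\ell\unorm$; and $\|u-u_h\|_{L_\infty(\p D)}\leq\max_{e\in\cE_h}\|u-u_h\|_{L_\infty(e)}\lesssim\alpha_h h^\ell\unorm$ by Theorem~\ref{thm:MaxNormBdryEst1}. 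Taking the maximum over $D$ yields the stated bound for $u-\PO u_h$. For $u-\PZ u_h$ I would run the identical argument with $\PDZ$ replacing $\POD$, using \eqref{eq:PDZHOne} in place of \eqref{eq:PODStability1}, Lemma~\ref{lem:PDZErrors} in place of Lemma~\ref{lem:PODErrors}, and invoking both \eqref{eq:PF1} and \eqref{eq:PF2}.

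The main point to watch---and the only substantive difference from the $S^D_2(\cdot,\cdot)$ case---is that $S^D_1(\cdot,\cdot)$ controls only nodal values, so the interior bound must be driven entirely by the boundary maximum-norm error of Theorem~\ref{thm:MaxNormBdryEst1}, whose path argument along mesh edges forces the quasi-uniformity assumption \eqref{eq:gamma}. I would verify that the logarithm enters only additively: the boundary term and the energy term each contribute one factor of $\ln(1+\max_{D\in\cT_h}\tD)$, but since these appear in a sum rather than a product (and $\alpha_h\geq1$ lets any intermediate $\sqrt{\alpha_h}$ be absorbed into $\alpha_h$), no square of the logarithm accumulates, and the final estimate degrades by exactly a single factor of $\ln(1+\max_{D\in\cT_h}\tD)$ relative to Theorem~\ref{thm:LInftyErrorsSD2}.
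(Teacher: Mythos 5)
Your proposal is correct and coincides with the paper's proof, which is stated in exactly the form you propose: repeat the argument for Theorem~\ref{thm:LInftyErrorsSD2} with Theorem~\ref{thm:MaxNormBdryEst2} replaced by Theorem~\ref{thm:MaxNormBdryEst1}, and your reconstruction of that argument (Sobolev inequality \eqref{eq:Sobolev}, inverse estimate \eqref{eq:DiscreteEstimate1}, the Poincar\'e--Friedrichs step via \eqref{eq:G2}, \eqref{eq:POD2}, \eqref{eq:PF1}--\eqref{eq:PF2}, then Lemma~\ref{lem:PODErrors}, Lemma~\ref{lem:PDZErrors}, Theorem~\ref{thm:ComputableEnergyError} and the boundary maximum-norm estimate) is accurate, including the bookkeeping showing that only a single logarithmic factor survives. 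One incidental slip, immaterial since you invoke Theorem~\ref{thm:MaxNormBdryEst1} as a black box: the path argument along mesh edges belongs to the proof of Theorem~\ref{thm:MaxNormBdryEst2}, whereas quasi-uniformity enters the proof of Theorem~\ref{thm:MaxNormBdryEst1} through the $\hD^{-1}$-scaled $L_2$ error bounds in \eqref{eq:SD1Edge2}.
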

%%%%%%%%%%%%%%%%%%%%%%%%
\section{Virtual Element Methods for the Poisson Problem in
Three Dimensions}\label{sec:Poisson3D}
 The analysis of virtual element methods in three dimensions
 follows the same strategy as in two dimensions
 and many of the results in Section~\ref{sec:LocalVEM2D} and
 Section~\ref{sec:Poisson2D} carry over by identical arguments.
 We will only provide details for estimates that require different derivations.
\par
 Let $\cT_h$ be a polyhedral mesh on $\O$. The set of the faces of a
 subdomain $D\in\cT_h$ is denoted by
 $\FD$ and the set of the edges of
 $F$ is denoted by $\EF$.    The set of all the faces of
 $\cT_h$ is denoted by $\cF_h$ and the set of all the edges of
  $\cT_h$ is denoted by
 $\cE_h$.
\subsection{Shape Regularity Assumptions in Three Dimensions}
\label{subsec:Shape3D}
 We impose the following shape regularity assumptions on $\cT_h$,
 where $\hD$ is the diameter of $D$.
\par\smallskip\noindent
{\em Assumption 1} \quad
 There exists $\rho\in (0,1)$, independent of $h$,
  such that every polyhedron $D\in\cT_h$ is star-shaped with
 respect to a ball $\fBD$ with radius $\geq \rho \hD$.
\par\smallskip\noindent
 {\em Assumption 2} \quad There exists a positive integer $N$,
 independent of $h$, such that
  $|\FD|\leq N$ for all $D\in\cT_h$.
 \par\smallskip\noindent
 {\em Assumption 3}\quad The shape regularity assumptions in
 Section~\ref{subsec:GlobalShapeRegularity}
 are satisfied by all the faces in $\cF_h$, with the same $\rho$ from
 Assumption~1 and the same
 $N$ from Assumption~2.
\par\smallskip
 All the hidden constants below will only depend on $\rho$, $N$ and $k$.
 \par
 Let $D$ be a polyhedron in $\cT_h$.
 We can define the inner product $(\!(\cdot,\cdot)\!)$ by
 \eqref{eq:InnerProduct} where the infinitesimal arc-length $ds$
 is replaced by
 the infinitesimal
 surface area $dS$.  Then the projection operator
 $\POD:H^1(D)\longrightarrow\PkD$ is defined by
 \eqref{eq:PODDef} and
\begin{equation}\label{eq:3DPODStability1}
  |\POD\zeta|_\HOD\leq |\zeta|_\HOD\qquad\forall\,\zeta\in H^1(D).
\end{equation}
 The projection from $\LTD$ to $\PkD$ is again denoted by $\PDZ$.
\par
 The results in Section~\ref{sec:SS}
  are valid for $D\in\cT_h$ under Assumption 1.
  Consequently the results in Section~\ref{subsec:PODEstimates} and
  Section~\ref{subsec:PDZEstimates}  are also valid
  provided the semi-norm $\tbar\cdot\tbar_{k,D}$ is defined by the
  following analog of
  \eqref{eq:tbarNormDef}:
\begin{equation}\label{eq:tbarNorm3D}
  \tbar \zeta\tbar_{k,D}^2=\|\PDZmt \zeta\|_\LTD^2
  +\hD\sum_{F\in\FD}\|\PFZmo \zeta\|_\LTF^2,
\end{equation}
 where $\PFZmo$ is the projection from $L_2(F)$ onto $\bbP_{k-1}(F)$.
\par
 We have
 the following estimates for $\POD$ and  $\PDZ$:
\begin{equation}\label{eq:3DLTwoPODPDZ}
   \|\zeta-\POD \zeta\|_\LTD+ \|\zeta-\PDZ\zeta\|_\LTD
   \lesssim \hD^{\ell+1}|\zeta|_{H^{\ell+1}(D)}
 \quad\forall\,\zeta\in H^{\ell+1}(D),\,0\leq\ell\leq k,
\end{equation}
 and for $1\leq\ell \leq k$,
\begin{alignat}{3}
   |\zeta-\POD\zeta|_{\HOD}+ |\zeta-\PDZ\zeta|_{\HOD}&\lesssim
   \hD^{\ell}|\zeta|_{H^{\ell+1}(D)}
 &\quad&\forall\,\zeta\in H^{\ell+1}(D), \label{eq:3DHOnePODPDZ}\\
  |\zeta-\POD \zeta|_{H^2(D)}+|\zeta-\PDZ\zeta|_\HTD&
  \lesssim h_D^{\ell-1}|\zeta|_{H^{\ell+1}(D)}
 &\quad&
  \forall\,\zeta\in H^{\ell+1}(D).\label{eq:3DHTwoPODPDZ}
\end{alignat}
\par
 The analogs of Lemma~\ref{lem:PODStability2} and
 \eqref{eq:tbarNorm3D} lead to the
 estimate
\begin{equation}\label{eq:3DPODStability2}
  \|\POD \zeta\|_\LTD^2\lesssim \tbar\zeta\tbar_{k,D}^2
  \lesssim \|\zeta\|_\LTD^2+\hD\|\zeta\|_{L_2(\p D)}^2\qquad
  \forall\,\zeta\in H^1(D),
\end{equation}
 and we also have the following analog of \eqref{eq:PDZHOne}:
\begin{equation}\label{eq:3DPDZHOne}
 |\PDZ \zeta|_\HOD\lesssim |\zeta|_\HOD \qquad\forall\,\zeta\in\HOD.
\end{equation}
%
%%%%%%%%%%%%%%%%%%%%%%%%%
\subsection{The Local Virtual Element Space $\bm\cQD$}
\label{subsec:Local3D}
 The space $\cQbD$ of
 continuous piecewise (two dimensional) virtual element functions of order
 $\leq k$ on $\p D$ is defined by
\begin{equation}\label{eq:cQbDDef}
  \cQbD=\{v\in C(\p D):\,v\big|_F\in \cQ^k(F) \quad\forall\,F\in \FD\}.
\end{equation}
\par
 For $k\geq 1$,
 the virtual element space $\cQD\subset \HOD$ is defined by the
 following conditions:  $v\in \HOD$ belongs to $\cQD$ if and only if
 (i) the trace of $v$ on $\p D$ belongs to $\cQbD$,
  (ii) the distribution $-\Delta v$ belongs to $\PkD$,
 and (iii) condition \eqref{eq:Condition3} is satisfied.
\begin{remark}\label{rem:3DContinuity}{\rm
  Since the restriction of $v\in\cQD$ to $\p D$ belongs to
  $C(\p D)$ and $-\Delta v\in \PkD$, the virtual
  element function $v$ is also continuous
 on $\bar D$ (cf. \cite[Section~1.2]{Kenig:1994:CBMS}).}
\end{remark}
\begin{remark}\label{rem:3Ddofs}{\rm
 The degrees of freedom of $\cQD$ (cf. \cite{AABMR:2013:Projector})
 consist of (i) the values of $v$ at the
 vertices of $D$ and nodes on
 the interior of each edge of $D$ that determine a polynomial of degree $k$
 on each edge of $D$,
 (ii) the moments of $\PFZmt v$ on each face $F$ of $D$, and
 (iii) the moments of
 $\PDZmt v$ on $D$.}
\end{remark}
\begin{remark}\label{rem:3DComputable}{\rm
  For $v\in\cQD$ and $F\in\FD$, the polynomial
  $\PFZ v$ can be computed in terms of the degrees of freedom of $v\big|_F$
   (cf. Remark~\ref{rem:Computable}).  Therefore the polynomial
  $\POD v$ can be computed in terms of the degrees of freedom of
  $v\in\cQD$ through \eqref{eq:POD1} and \eqref{eq:POD2}.
  The polynomial $\PDZ v$ can then be computed through
  \eqref{eq:Condition3}.
}
\end{remark}
\begin{remark}\label{rem:DAndF}{\rm
 Under Assumption~3 in Section~\ref{subsec:Shape3D}, the results in
 Section~\ref{sec:LocalVEM2D} (with $D$ replaced by $F$)
 are valid for the restriction of $v\in \cQD$
 to any face $F$ of $D$.
 }
 \end{remark}
\par
 The three dimensional analogs of Lemma~\ref{lem:MEP},
 Lemma~\ref{lem:Fundamental}
 and Lemma~\ref{lem:InverseEstimate1}
 lead to the estimate
\begin{equation}\label{eq:3DInverse}
 |v|_{H^1(D)}^2\lesssim \hD^{-2}\tbar\zeta\tbar_{k,D}^2
      +\hD\sum_{F\in\FD}\|\GF v\|_\LTF^2 \qquad\forall\,v\in\cQD,
\end{equation}
 where $\GF$ is the two dimensional gradient operator on the face $F$,
 and we also have an analog  of \eqref{eq:KerPod13}:
\begin{equation}\label{eq:3DKerPOD}
  \|\PDZmt v\|_\LTD^2\lesssim \hD\SumF \|\PFZmo v\|_\LTF^2
  \qquad\forall\,v\in \KerD.
\end{equation}
 Hence we have, by \eqref{eq:tbarNorm3D}, \eqref{eq:3DInverse} and
 \eqref{eq:3DKerPOD},
\begin{equation}\label{eq:3DInverse2}
  |v|_{H^1(D)}^2\lesssim \hD^{-1}\sum_{F\in\FD}\|\PFZmo v\|_\LTF^2\\
         +\hD\sum_{F\in\FD}\|\GF v\|_\LTF^2\qquad\forall\,v\in\KerD.
\end{equation}
\par
 The interpolation operator $\ID:H^2(D)\longrightarrow \cQD$ is defined by the
 condition that $\ID\zeta$ and $\zeta$ share the same degrees of freedom.
 In particular we have
\begin{equation}\label{eq:3DIDInvariance}
 \ID q=q\qquad\forall\,q\in \PkD.
\end{equation}
\par
 Note that
\begin{equation}\label{eq:ConsistentID}
  \IF (\zeta\big|_F)=(\ID\zeta)\big|_F \qquad\forall\,F\in\FD,
\end{equation}
 and hence, in view of \eqref{eq:G2}, Lemma~\ref{lem:MaximumPrinciple}
 and \eqref{eq:tbarNorm3D},
\begin{align}\label{eq:3DIDtbar}
  \tbar \ID\zeta\tbar_{k,D}^2&=\|\PDZmt (\ID\zeta)\|_\LTD^2
  +\hD\SumF\|\PFZmo (\ID\zeta)\|_\LTF^2\notag\\
  &=\|\PDZmt\zeta\|_\LTD^2+\hD\SumF \|\PFZmo(\IF\zeta)\|_\LTF^2\\
  &\lesssim \|\zeta\|_\LTD^2+\hD\SumF\hF^2\|\IF\zeta\|_{L_\infty(F)}^2\notag\\
  &\lesssim \|\zeta\|_\LTD^2+\hD\SumF\hF^2\big(\|\IF\zeta\|_{L_\infty(\p F)}^2
  +\|\GF(\IF\zeta)\|_\LTF^2\big).\notag
\end{align}
\par
 The error estimates for $\ID$ rely on the following analog of \eqref{eq:IDHOne},
 where $\tF$ is defined by replacing $D$ by $F$ in \eqref{eq:TauD}.
\begin{lemma}\label{lem:3DIDHOne}
 We have
\begin{equation}\label{eq:3DIDHOne}
 |\ID\zeta|_\HOD\lesssim \hD^{-1}\|\zeta\|_\LTD
 +|\zeta|_{H^1(D)}+\hD|\zeta|_\HTD
\end{equation}
 for all $\zeta\in\HTD$.
\end{lemma}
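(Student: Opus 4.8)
The plan is to treat this as the three dimensional analogue of \eqref{eq:IDHOne}, with the role of the boundary trace term in Lemma~\ref{lem:IDFundamental1} now played by the face-gradient contribution in the inverse estimate \eqref{eq:3DInverse}. The three ingredients I would combine are: the estimate \eqref{eq:3DInverse} applied to $\ID\zeta\in\cQD$, the two dimensional interpolation estimates on each face (valid by Remark~\ref{rem:DAndF}), and trace inequalities that convert face Sobolev seminorms into volume seminorms on $D$.

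Before anything else I would record the trace inequality
\begin{equation*}
 \hD^2|\zeta|_{H^{3/2}(F)}^2\lesssim |\zeta|_\HOD^2+\hD^2|\zeta|_\HTD^2
 \qquad\forall\,\zeta\in\HTD,
\end{equation*}
valid for every face $F\in\FD$. This is proved by exactly the Calderon--Zygmund extension argument of Lemma~\ref{lem:CZ1} and Lemma~\ref{lem:CZ2}: after scaling to $\hD=1$ and normalizing $\int_D\zeta\,dx=0$, extend $\zeta$ to $E\zeta\in H^2(\R^3)$, bound $\|E\zeta\|_{H^2(\R^3)}\lesssim|\zeta|_\HOD+|\zeta|_\HTD$ via \eqref{eq:PF1}, and then apply the trace theorem $H^2(G)\to H^{3/2}(\tilde F)$ for the half-space $G$ bordering the plane $\tilde F$ containing $F$. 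Tracking the powers of $\hD$ under scaling (where $|\zeta|_{H^{3/2}(F)}^2$ scales like $\hD^{-1}$, while $|\zeta|_\HOD^2$ and $|\zeta|_\HTD^2$ scale like $\hD$ and $\hD^{-1}$) produces the stated inequality. Together with Lemma~\ref{lem:CZ2}, this is all the face-to-volume information I will need.

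Next I would apply \eqref{eq:3DInverse} to $\ID\zeta$, giving $|\ID\zeta|_{H^1(D)}^2\lesssim \hD^{-2}\tbar\ID\zeta\tbar_{k,D}^2+\hD\SumF\|\GF(\ID\zeta)\|_\LTF^2$, and bound the two terms separately. For the first term I would use $\PDZmt\ID\zeta=\PDZmt\zeta$ together with the definition \eqref{eq:tbarNorm3D}, the identity \eqref{eq:ConsistentID}, and the two dimensional $L_2$ stability \eqref{eq:IDLTwoHalf} on each face, to obtain
\begin{equation*}
 \tbar\ID\zeta\tbar_{k,D}^2\lesssim \|\zeta\|_\LTD^2
 +\hD\SumF\big(\|\zeta\|_\LTF^2+\hF^2|\zeta|_{H^1(F)}^2+\hF^3|\zeta|_{H^{3/2}(F)}^2\big);
\end{equation*}
multiplying by $\hD^{-2}$ and using $\hF\le\hD$, the scaled trace inequality \eqref{eq:Trace}, Lemma~\ref{lem:CZ2}, and the displayed $H^{3/2}$ trace inequality then yields $\hD^{-2}\tbar\ID\zeta\tbar_{k,D}^2\lesssim \hD^{-2}\|\zeta\|_\LTD^2+|\zeta|_\HOD^2+\hD^2|\zeta|_\HTD^2$. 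For the face-gradient term I would use \eqref{eq:ConsistentID} to write $\GF(\ID\zeta)=\GF\IF(\zeta\big|_F)$, apply the two dimensional interpolation estimate \eqref{eq:IDHOneHalf} on $F$ (legitimate since the trace of $\zeta$ lies only in $H^{3/2}(F)$), and then invoke Lemma~\ref{lem:CZ2} and the $H^{3/2}$ trace inequality once more, giving $\hD\SumF\|\GF(\ID\zeta)\|_\LTF^2\lesssim |\zeta|_\HOD^2+\hD^2|\zeta|_\HTD^2$. Combining the two bounds and taking square roots gives \eqref{eq:3DIDHOne}.

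The main obstacle, and the step requiring genuine care, is the $H^{3/2}(F)$ trace inequality together with the associated fractional-power bookkeeping: because $\zeta\big|_F$ is only in $H^{3/2}(F)$ one is forced to use the half-order estimates \eqref{eq:IDHOneHalf} and \eqref{eq:IDLTwoHalf} rather than their integer-order counterparts, so every face contribution carries fractional powers of $\hF$ that must be absorbed into the target powers of $\hD$ using $\hF\le\hD$. Verifying that these substitutions produce precisely $\hD^{-2}\|\zeta\|_\LTD^2+|\zeta|_\HOD^2+\hD^2|\zeta|_\HTD^2$, with no stray powers of $\hD$, is the crux; the remaining manipulations are routine given the earlier results.
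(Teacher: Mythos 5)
Your argument is correct, and its outer skeleton coincides with the paper's: both apply the inverse estimate \eqref{eq:3DInverse} to $\ID\zeta$, and both treat the face-gradient term exactly as you do, via \eqref{eq:ConsistentID}, the half-order face estimate \eqref{eq:IDHOneHalf}, Lemma~\ref{lem:CZ2}, and a trace bound for $|\zeta|_{H^{3/2}(F)}$. The genuine divergence is in how the term $\hD^{-2}\tbar\ID\zeta\tbar_{k,D}^2$ is controlled. The paper (in \eqref{eq:3DIDtbar}) bounds $\|\PFZmo(\IF\zeta)\|_\LTF\lesssim \hF\|\IF\zeta\|_{L_\infty(F)}$ and then invokes the two dimensional maximum principle (Lemma~\ref{lem:MaximumPrinciple}) on each face, followed by \eqref{eq:TrivialBdd} and the three dimensional Sobolev inequality \eqref{eq:Sobolev} to absorb the resulting $\|\zeta\|_{L_\infty(\p F)}$ terms (this is \eqref{eq:IFLinfty}); you instead stay entirely in $L_2$, replacing the maximum principle and the $L_\infty$ embedding by the face $L_2$-stability \eqref{eq:IDLTwoHalf} together with the scaled trace inequality \eqref{eq:Trace}. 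A second, smaller difference concerns $|\zeta|_{H^{3/2}(F)}$: the paper cites \eqref{eq:HalfTrace} (applied to the first-order derivatives of $\zeta$, whose tangential traces on $F$ constitute $\GF\zeta$, so that $|\zeta|_{H^{3/2}(F)}\approx|\GF\zeta|_{H^{1/2}(F)}\lesssim|\zeta|_\HTD$, an estimate that is scale-invariant), whereas you prove the dedicated inequality $\hD^2|\zeta|_{H^{3/2}(F)}^2\lesssim|\zeta|_\HOD^2+\hD^2|\zeta|_\HTD^2$ by rerunning the Calderon--Zygmund extension argument of Lemmas~\ref{lem:CZ1} and~\ref{lem:CZ2}; your scaling bookkeeping there is correct, and since $\hF\leq\hD$ your inequality delivers everything that is needed. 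In terms of what each route buys: yours is purely $L_2$-based, avoiding the harmonic-function maximum principle, at the cost of establishing one extra fractional trace lemma; the paper's reuses only results already on record, at the cost of passing through $L_\infty$ quantities. Both yield \eqref{eq:3DIDHOne} with constants depending only on $\rho$, $N$ and $k$.
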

\begin{proof}
 Let $\zeta \in H^2(D)$ be arbitrary.  It follows from
 \eqref{eq:3DInverse}
 and \eqref{eq:3DIDtbar} that
\begin{align*}
  |\ID\zeta|_\HOD^2
  &\lesssim \hD^{-2}\|\zeta\|_\LTD^2+\hD \SumF \|\IF \zeta\|_{L_\infty(\p F)}^2
  +\hD\SumF \|\GF(\IF\zeta)\|_\LTF^2.
\end{align*}
 We have, by \eqref{eq:Sobolev} and \eqref{eq:TrivialBdd},
\begin{align}\label{eq:IFLinfty}
  \hD\SumF\|\IF\zeta\|_{L_\infty(\p F)}^2&\lesssim
  \hD\SumF \|\zeta\|_{L_\infty(\p F)}^2\\
     &\lesssim \hD\|\zeta\|_\LinfD^2\lesssim \hD^{-2}\|\zeta\|_\LTD^2
     +|\zeta|_\HOD^2+\hD^2|\zeta|_\HTD^2,\notag
\end{align}
 and by \eqref{eq:HalfTrace}, Lemma~\ref{lem:CZ2} and \eqref{eq:IDHOneHalf},
\begin{align}\label{eq:IFHone}
  \hD\SumF \|\GF(\IF\zeta)\|_\LTF^2&\lesssim
    \hD\SumF\big(|\zeta|_\HOF^2+\hF |\zeta|_{H^{3/2}(F)}^2\big)\\
   &\lesssim \SumD\big( |\zeta|_\HOD^2+\hD^{2}|\zeta|_\HTD^2\big).
   \notag
%  \factorTD \hD\|\zeta\|_{\LinfD}^2\\
%  &\hspace{60pt}\lesssim \factorTD \big(\hD^{-2}\|\zeta\|_\LTD^2
%  +|\zeta|_\HOD^2+\hD^2|\zeta|_\HTD^2\big).\notag
\end{align}
\end{proof}
\par
 Note that \eqref{eq:3DIDtbar}, \eqref{eq:IFLinfty} and \eqref{eq:IFHone} imply
\begin{equation}\label{eq:3DIDtbar2}
  \tbar\ID\zeta\tbar_{k,D}\lesssim
  \|\zeta\|_\LTD+\hD|\zeta|_\HOD+\hD^2|\zeta|_\HTD \quad\forall\,\zeta\in\HTD,
\end{equation}
 and hence we have, in view of \eqref{eq:3DLTwoPODPDZ},
 \eqref{eq:3DPODStability2}, \eqref{eq:3DIDtbar} and \eqref{eq:3DIDHOne},
\begin{align}\label{eq:3DIDLTwo}
  \|\ID\zeta\|_\LTD&\leq \|\ID\zeta-\POD\ID\zeta\|_\LTD+\|\POD\ID\zeta\|_\LTD\notag\\
    &\lesssim \hD|\ID\zeta|_\HOD+\tbar\ID\zeta\tbar_{k,D}\\
    &\lesssim \big(\|\zeta\|_\LTD
 +\hD|\zeta|_{H^1(D)}+\hD^2|\zeta|_\HTD\big)\notag
\end{align}
 for all $\zeta\in H^2(D)$.
\par
 In view of \eqref{eq:3DIDInvariance},
 the following analogs of \eqref{eq:LTwoPODID}--\eqref{eq:HTwoPODID},
 where $\zeta\in H^{\ell+1}(D)$ and $1\leq\ell\leq k$,
  can be obtained by
 combining the Bramble-Hilbert estimates \eqref{eq:BHEstimates} with the stability
 estimates \eqref{eq:3DPODStability1},
 \eqref{eq:3DPODStability2}, \eqref{eq:3DIDHOne},
 \eqref{eq:3DIDtbar2} and \eqref{eq:3DIDLTwo}.
\begin{alignat}{3}
 \|\zeta-\ID\zeta\|_\LTD+\|\zeta-\POD\ID\zeta\|_\LTD
 &\lesssim \hD^{\ell+1}|\zeta|_{H^{\ell+1}(D)}
     \label{eq:3DLTwoPODID}\\
  |\zeta-\ID\zeta|_{\HOD}+|\zeta-\POD\ID\zeta|_{\HOD}
 &\lesssim \hD^{\ell}|\zeta|_{H^{\ell+1}(D)}
   \label{eq:3DHOnePODID}\\
    |\zeta-\POD\ID\zeta|_\HTD&\lesssim
    \hD^{\ell-1}|\zeta|_{H^{\ell+1}(D)}
  \label{eq:3DHTwoPODID}
\end{alignat}
\begin{remark}\label{rem:3DLInftyInterpolation}{\rm
  We also have the following analog of \eqref{eq:1DInterpolationError}:
\begin{equation*}
  \|\zeta-\ID\zeta\|_\LinfD\lesssim h^{\ell-\frac12}|u|_{H^{\ell+1}(D)}
\end{equation*}
 for all $\zeta\in H^{\ell+1}(D)$ and $1\leq \ell\leq k$.  The proof uses
 Lemma~\ref{lem:MaximumPrinciple} (which is valid in three dimensions) and the
 arguments for \eqref{eq:1DInterpolationError}.  But we do not need this estimate in
 the error analysis.
}
\end{remark}
%%%%%%%%%%%%%%%%%%%%%%%%%%%%
\subsection{The Discrete Problem}\label{subsec:DP3D}
 Let the global virtual element space $\cQh$ be defined by
 $$\cQh=\{v\in H^1_0(\O):\,v\big|_D\in\cQD\quad\forall\,D\in\cT_h\}.$$
 The discrete problem for \eqref{eq:Poisson} is to find $u_h\in\cQh$ such that
\begin{equation*}
  a_h(u_h,v)=(f,\Xi_h v) \qquad\forall\,v\in\cQh,
\end{equation*}
 where $\Xi_h$ is defined as in \eqref{eq:Xih},
\begin{equation*}
  a_h(w,v)=\SumD\big[\big(\nabla(\POD w),\nabla(\POD v)\big)_\LTD+S^D(w-\POD w,v-\POD v)\big],
\end{equation*}
 and the local stabilizing bilinear form $S^D(\cdot,\cdot)$ is given by
\begin{align}
    S^D(w,v)&=\hD\SumF\Big(\hF^{-2}(\PFZmt w,\PFZmt v)_\LTF+
   \sum_{p\in\NPF}w(p)v(p)\Big).\label{eq:3DSD}
\end{align}
 Here $\NPF$ is the set of the nodes along $\p F$ associated with the degrees of freedom
 of a virtual element function.
\begin{lemma}\label{lem:3DSDBdd}
 There exists a positive constant $C$, depending only on $\rho$, $N$ and $k$, such that
\begin{alignat}{3}
 |v|_\HOD^2&\leq C \big [\ln(1+\max_{F\in\FD}\tF)\big] S^D(v,v)
 &\qquad&\forall\,v\in\KerD. \label{eq:3DSDBdd}
\end{alignat}
\end{lemma}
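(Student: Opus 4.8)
The plan is to start from the inverse estimate \eqref{eq:3DInverse2}, which already incorporates the hypothesis $v\in\KerD$ and reduces $|v|_\HOD^2$ to a sum of two-dimensional face quantities, and then to bound each of its two terms separately by $S^D(v,v)$. By Remark~\ref{rem:DAndF} all the results of Section~\ref{sec:LocalVEM2D} apply to the restriction of $v$ to any face $F\in\FD$, so the whole argument is a matter of applying the correct face estimate and keeping track of the geometric relations $\hE\leq\hF\leq\hD$. Throughout I would also use the face analog of Remark~\ref{rem:NormEquivalence}, namely $\|v\|_{L_\infty(\p F)}^2\approx\sum_{p\in\NPF}v^2(p)$.

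First I would dispose of the term $\hD^{-1}\SumF\|\PFZmo v\|_\LTF^2$, which should cost no logarithmic factor. Since $\PFZmo v$ is the $L_2(F)$-projection onto a subspace of $\bbP_k(F)$, one has $\|\PFZmo v\|_\LTF\leq\|\PFZ v\|_\LTF\lesssim\tbar v\tbar_{k,F}$ by the face version of Lemma~\ref{lem:PDZcQD}. Expanding the face semi-norm \eqref{eq:tbarNormDef} produces a moment part $\|\PFZmt v\|_\LTF^2$, which is controlled because $\hD^{-1}\leq\hD\hF^{-2}$, and an edge part $\hF\sum_{e\in\EF}\|\Pi_{k-1,e}^0 v\|_{L_2(e)}^2$. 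For the latter I would use $\|\Pi_{k-1,e}^0 v\|_{L_2(e)}^2\leq\|v\|_{L_2(e)}^2\lesssim\hE\|v\|_{L_\infty(e)}^2\lesssim\hF\sum_{p\in\cN_e}v^2(p)$, so that $\hF\sum_{e\in\EF}\|\Pi_{k-1,e}^0 v\|_{L_2(e)}^2\lesssim\hF^2\sum_{p\in\NPF}v^2(p)$; together with $\hD^{-1}\hF^2\leq\hD$ this gives $\hD^{-1}\SumF\|\PFZmo v\|_\LTF^2\lesssim S^D(v,v)$.

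The decisive term is $\hD\SumF\|\GF v\|_\LTF^2=\hD\SumF|v|_\HOF^2$, and here is where the logarithm is genuinely forced. I would apply the face version of Corollary~\ref{cor:InverseEstimate3}, which yields $|v|_\HOF^2\lesssim\hF^{-2}\|\PFZmt v\|_\LTF^2+\ln(1+\tF)\|v\|_{L_\infty(\p F)}^2$. Summing against $\hD$, the first contribution is exactly $\hD\SumF\hF^{-2}\|\PFZmt v\|_\LTF^2\leq S^D(v,v)$, while the second becomes $\hD\SumF\ln(1+\tF)\sum_{p\in\NPF}v^2(p)\leq[\ln(1+\max_{F\in\FD}\tF)]\,S^D(v,v)$ after pulling the largest logarithm out of the sum. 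The hard part is precisely this second piece: estimating the tangential gradient energy of a face virtual function by its nodal boundary values is exactly the two-dimensional obstruction quantified in Corollary~\ref{cor:InverseEstimate3}, and the factor $\ln(1+\tF)$ cannot be removed, so it propagates into three dimensions as $\ln(1+\max_{F\in\FD}\tF)$. Combining the two term estimates with \eqref{eq:3DInverse2} then establishes \eqref{eq:3DSDBdd}.
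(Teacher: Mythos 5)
Your proposal is correct, and its skeleton coincides with the paper's own proof: both start from \eqref{eq:3DInverse2}, both obtain the logarithm by applying the face version of Corollary~\ref{cor:InverseEstimate3} to the tangential-gradient term, and both finish via Remark~\ref{rem:NormEquivalence} and the definition \eqref{eq:3DSD}. The one genuine divergence is your treatment of the term $\hD^{-1}\SumF\|\PFZmo v\|_\LTF^2$. The paper bounds $\|\PFZmo v\|_\LTF\leq\|v\|_\LTF$ and then invokes the maximum principle (Lemma~\ref{lem:MaximumPrinciple}, applied on each face) to get $\|v\|_\LTF^2\lesssim \hF^2\big(\|v\|_{L_\infty(\p F)}^2+\|\GF v\|_\LTF^2\big)$, so that after using $\hF\leq\hD$ this term is folded back into the gradient term and routed through the same log-producing corollary. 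You instead use the face version of Lemma~\ref{lem:PDZcQD} (via the nesting inequality $\|\PFZmo v\|_\LTF\leq\|\PFZ v\|_\LTF$), expand the face semi-norm \eqref{eq:tbarNormDef}, and estimate the edge contributions directly by nodal values using $\hE\leq\hF\leq\hD$. Both routes are valid and yield the same constant dependence on $\rho$, $N$ and $k$; yours has the small expository advantage of showing that the $\PFZmo$ term is log-free, so that only the tangential-gradient term forces the factor $\ln(1+\max_{F\in\FD}\tF)$, whereas the paper's route is shorter because it never needs to expand $\tbar v\tbar_{k,F}$.
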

\begin{proof}  Let $v\in\KerD$ be arbitrary.  We have, by  \eqref{eq:G2},
 \eqref{eq:PF2}, Lemma~\ref{lem:MaximumPrinciple}, Corollary~\ref{cor:InverseEstimate3}
  and \eqref{eq:3DInverse2},
\begin{align*}
 |v|_\HOD^2&\lesssim  \hD\SumF \big(\hD^{-2}\|v\|_\LTF^2
 +\|\GF v\|_\LTF^2\big)\notag\\
      &\lesssim \hD\SumF\Big(\hD^{-2}\hF^2\big(\|v\|_{L_\infty(\p F)}^2
      +\|\GF v\|_{L_2(F)}^2\big)+\|\GF v\|_\LTF^2\Big)\\
      &\lesssim \hD\SumF\big(\|v\|_{L_\infty(\p F)}^2+\|\GF v\|_\LTF^2\big)\\
      &\lesssim \hD\SumF\big(\hF^{-2}\|\PFZmt v\|_\LTF^2
      +\ln(1+\tF)\|v\|_{L_\infty(\p F)}^2\big),
\end{align*}
  which together with Remark~\ref{rem:NormEquivalence} and \eqref{eq:3DSD}
  implies \eqref{eq:3DSDBdd}.
\end{proof}
\par
 It follows from \eqref{eq:3DSDBdd} that we have an analog of \eqref{eq:Stability}:
 \begin{equation}\label{eq:3DStability}
  |v|_{H^1(\O)}^2\leq 2\SumD\big[|\PO v|_{H^1(\O)}^2
  +|v-\POD v|_{\HOD}^2\big]\lesssim
    \beta_h a_h(v,v) \qquad\forall\,v\in \cQh,
\end{equation}
 where
\begin{equation}\label{eq:lambdah}
  \beta_h=     \ln(1+\max_{F\in\cF_h}\tF).
\end{equation}
 Hence the discrete problem is well-posed.
\begin{remark}\label{rem:3DInfo}{\rm
 The constants in the error estimates for the virtual element methods
  will only depend on
 $\rho$, $N$, $k$ and $\beta_h$. Therefore the existence of small
 faces in $\cT_h$ does not
 affect the performance of the method.  It is only the relative sizes of
 the edges on each face that matter.
 }
\end{remark}
\par
 Note that the estimates in Lemma~\ref{lem:Xih} are also valid
 for $\O\subset \R^3$.
%%%%%%%%%%%%%%%%%%%%
\subsection{Error Estimates in the Energy Norm}\label{subsec:3DEnergyError}
 The abstract error estimate
\begin{equation}\label{eq:3DAbstractEnergyError}
 \|u-u_h\|_h\lesssim \|u-\Ih u\|_h+\|u-\PO u\|_h
  +\sqrt{\beta_h}\Big(|u-\PO u|_{h,1}+
  \sup_{w\in\cQh}\frac{(f,w-\Xi_hw)}{|w|_{H^1(\O)}}\Big)
\end{equation}
 is obtained by the same arguments as in Section~\ref{subsec:AbstractEnergyError},
 where $|\cdot|_{h,1}$ is defined  in \eqref{eq:PiecewiseHOneNOrm}.
\par
 We will derive concrete error estimates under the assumption that
 $u$ belongs to $H^{\ell+1}(\O)$ for $1\leq\ell\leq k$.
  Since the estimate
\begin{equation}\label{eq:3DEasy}
  |u-\PO u|_{h,1}+ \sup_{w\in\cQh}\frac{(f,w-\Xi_hw)}{|w|_{H^1(\O)}}
  \lesssim h^\ell\unorm
\end{equation}
  remains the same,
  we only need to estimate $\|u-\Ih u\|_h$ and $\|u-\PO u\|_h$.
\par
 It follows from \eqref{eq:G1},  \eqref{eq:3DPODStability1},
 \eqref{eq:ConsistentID} and
 \eqref{eq:3DSD} that
\begin{align}\label{eq:3DInterpolationError1}
  \|u-\Ih u\|_h^2&\lesssim \SumD|\POD(u-\ID u)|_\HOD^2
  +\SumD\hD\SumF \hF^{-2}\|u-\ID u\|_\LTF^2\notag\\
  &\hspace{60pt}+\SumD\hD\SumF\hF^{-2}\|\POD(u-\ID u)\|_\LTF^2\notag\\
    &\hspace{90pt}+\SumD \hD\SumF \|\POD (u-\ID u)\|_{L_\infty(\p F)}^2\\
        &\lesssim \SumD |u-\ID u|_\HOD^2
        +\SumD\hD\SumF \hF^{-2}\|u-\IF u\|_\LTF^2\notag\\
        &\hspace{60pt}+\SumD \hD\|\POD (u-\ID u)\|_{L_\infty(D)}^2,\notag
\end{align}
 and we have, by \eqref{eq:HalfTrace} and \eqref{eq:HalfIDLTwoError},
\begin{equation}\label{eq:3DInterpolationError2}
  \SumD\hD\SumF \hF^{-2}\|u-\IF u\|_\LTF^2
  \lesssim \SumD \hD \SumF \hF^{2\ell-1}|u|_{H^{\ell+(1/2)}(F)}^2
   \lesssim h^{2\ell}\unorm^2.
\end{equation}
 Moreover the estimates \eqref{eq:G1}, \eqref{eq:DiscreteEstimate3} and
 \eqref{eq:3DPODStability1} imply
\begin{align}\label{eq:3DInterpolationError3}
 &\SumD \hD\|\POD (u-\ID u)\|_{L_\infty(D)}^2 \notag\\
 &\hspace{60pt}
  \lesssim   \SumD \hD^{-2}\|\POD (u-\ID u)\|_\LTD^2
  +\SumD|\POD (u-\ID u)|_\HOD^2\\
  &\hspace{60pt}\lesssim
      \SumD \hD^{-2}\big(\|\POD u-u)\|_\LTD^2
      +\|u-\POD\ID u)\|_\LTD^2\big)\notag\\
      &\hspace{100pt}\notag+\SumD|u-\ID u|_\HOD^2.\notag
\end{align}
\par
 Combining \eqref{eq:3DLTwoPODPDZ}, \eqref{eq:3DLTwoPODID},
 \eqref{eq:3DHOnePODID} and
  \eqref{eq:3DInterpolationError1}--\eqref{eq:3DInterpolationError3},
 we obtain
\begin{equation}\label{eq:3DInterpolationError}
        \|u-\Ih u\|_h^2\lesssim  h^{2\ell}\unorm^2,
\end{equation}
 which is the analog of \eqref{eq:InterpolationError}.
\par
 From \eqref{eq:Sobolev}, \eqref{eq:G1},
 \eqref{eq:3DLTwoPODPDZ}--\eqref{eq:3DHTwoPODPDZ}
  and \eqref{eq:3DSD},  we find
\begin{align}\label{eq:3DProjectionError}
  \|u-\PO u\|_h^2&\lesssim %\SumD |u-\POD u|_\HOD^2
  \SumD\hD\SumF\hF^{-2}\|u-\POD u\|_\LTF^2
   +\SumD \hD\SumF \|u-\POD u\|_{L_\infty(\p F)}^2\notag\\
   &\lesssim \SumD \hD\|u-\POD u\|_{L_\infty(D)}^2\\
   &\lesssim    \SumD\big( \hD^{-2}\|u-\POD u\|_\LTD^2
      +|u-\POD u|_\HOD^2+\hD^2|u-\POD u|_\HTD^2\big)\notag\\
      &\lesssim h^{2\ell}\unorm^2,\notag
 \end{align}
 which is the analog of \eqref{eq:ProjectionError}.
\par
 The estimates \eqref{eq:3DAbstractEnergyError}, \eqref{eq:3DEasy},
 \eqref{eq:3DInterpolationError} and \eqref{eq:3DProjectionError}
% together we arrive at
 lead to the following analog of Theorem~\ref{thm:ConcreteEnergyError}.
\begin{theorem}\label{thm:3DConcreteEnergyError}
   Assuming the solution $u$ of \eqref{eq:Poisson} belongs to
 $H^{\ell+1}(\O)$ for $\ell$ between $1$ and $k$, we have
\begin{equation}\label{eq:3DConcreteEnergyError}
  \|u-u_h\|_h\leq C \sqrt{\beta_h}h^\ell\unorm.
\end{equation}
 where $\beta_h$ is defined in \eqref{eq:lambdah} and
 the positive constant $C$ depends only on $\rho$, $N$ and $k$.
\end{theorem}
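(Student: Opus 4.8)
The plan is to follow the two-dimensional template of Theorem~\ref{thm:ConcreteEnergyError} essentially verbatim, feeding the concrete bounds assembled earlier in Section~\ref{subsec:3DEnergyError} into the abstract estimate. Starting from \eqref{eq:3DAbstractEnergyError},
\begin{equation*}
 \|u-u_h\|_h\lesssim \|u-\Ih u\|_h+\|u-\PO u\|_h
  +\sqrt{\beta_h}\Big(|u-\PO u|_{h,1}+
  \sup_{w\in\cQh}\frac{(f,w-\Xi_h w)}{|w|_{H^1(\O)}}\Big),
\end{equation*}
I would insert the interpolation bound \eqref{eq:3DInterpolationError} for $\|u-\Ih u\|_h$, the projection bound \eqref{eq:3DProjectionError} for $\|u-\PO u\|_h$, and the bound \eqref{eq:3DEasy} for the parenthesized quantity. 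Each of these three contributions is $\lesssim h^\ell\unorm$, with the crucial feature that only the last term carries the stabilization weight $\sqrt{\beta_h}$. Collecting them gives $\|u-u_h\|_h\lesssim (1+\sqrt{\beta_h})\,h^\ell\unorm$.

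To convert this into the clean bound \eqref{eq:3DConcreteEnergyError}, I would absorb the two weight-free terms into the weighted one. Since the edge-ratio $\tF\geq 1$ for every face $F$, we have $1+\max_{F\in\cF_h}\tF\geq 2$, so $\beta_h=\ln(1+\max_{F\in\cF_h}\tF)\geq \ln 2$ by \eqref{eq:lambdah}; hence $\sqrt{\beta_h}$ is bounded below by a universal positive constant and $h^\ell\unorm\lesssim \sqrt{\beta_h}\,h^\ell\unorm$. Absorbing in this way yields $\|u-u_h\|_h\leq C\sqrt{\beta_h}\,h^\ell\unorm$ with $C$ depending only on $\rho$, $N$ and $k$, as claimed.

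For this final statement there is no genuinely new obstacle: the analysis has already been front-loaded into the preparatory estimates, and the remaining step is bookkeeping. The real difficulty — already resolved in the build-up — is proving the interpolation and projection bounds \eqref{eq:3DInterpolationError} and \eqref{eq:3DProjectionError} \emph{without} any logarithmic factor, so that the $\sqrt{\beta_h}$ enters solely through the stability constant (via Lemma~\ref{lem:3DSDBdd}) rather than through the approximation terms. The point I would verify most carefully while assembling the bound is precisely this separation: that $|u-\PO u|_{h,1}$ and the $f$-consistency term multiplying $\sqrt{\beta_h}$ are themselves free of any $\tF$-dependence, and that the face-wise interpolation step \eqref{eq:3DInterpolationError2} avoids a spurious second logarithmic factor, which is where the half-order trace estimate \eqref{eq:HalfTrace} together with the face bound \eqref{eq:HalfIDLTwoError} does the work.
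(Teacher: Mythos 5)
Your proposal is correct and follows exactly the paper's own argument: the paper also obtains Theorem~\ref{thm:3DConcreteEnergyError} by inserting \eqref{eq:3DEasy}, \eqref{eq:3DInterpolationError} and \eqref{eq:3DProjectionError} into the abstract estimate \eqref{eq:3DAbstractEnergyError}. Your explicit absorption of the weight-free terms via $\beta_h\geq\ln 2$ (since $\tF\geq 1$) is a detail the paper leaves implicit, but it is the right justification.
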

\par
 The following analog of
 Theorem~\ref{thm:ComputableEnergyError} on the computable approximate
 solutions $\PO  u_h$ and $\PZ u_h$
 is obtained by the same arguments.
 \begin{theorem}\label{thm:3DComputableEnergyError}
    Assuming the solution $u$ of \eqref{eq:Poisson} belongs to
    $H^{\ell+1}(\O)$
   for $\ell$ between $1$ and $k$, there exists a positive constant $C$,
   depending only on
   $\rho$, $N$ and $k$, such that
 \begin{equation}\label{eq:3DComputableEnergyError}
  |u-u_h|_{H^1(\O)}+\sqrt{\beta_h}\big[|u-\PO u_h|_{h,1}
  +|u-\PZ u|_{h,1}\big]\leq
  C \beta_h h^\ell |u|_{H^{\ell+1}(\O)},
\end{equation}
 where $\beta_h$ is defined in \eqref{eq:lambdah}.
\end{theorem}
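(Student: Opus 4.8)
The plan is to follow the proof of Theorem~\ref{thm:ComputableEnergyError} essentially verbatim, replacing each two dimensional ingredient by its three dimensional analog. The four quantities appearing on the left-hand side of \eqref{eq:3DComputableEnergyError} are treated in turn, and the only real issue is the bookkeeping of the powers of $\sqrt{\beta_h}$ that enter through the stability estimate \eqref{eq:3DStability}.

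First I would bound $|\PO(u-u_h)|_{h,1}$. Since the stabilizing form $S^D(\cdot,\cdot)$ in \eqref{eq:3DSD} is positive semidefinite, the definition of $a_h$ gives $|\PO(u-u_h)|_{h,1}\le\|u-u_h\|_h$, and Theorem~\ref{thm:3DConcreteEnergyError} then yields $|\PO(u-u_h)|_{h,1}\lesssim\sqrt{\beta_h}\,h^\ell\unorm$. Combining this with the projection estimate \eqref{eq:3DEasy} through the triangle inequality produces
\begin{equation*}
 |u-\PO u_h|_{h,1}\le|u-\PO u|_{h,1}+|\PO(u-u_h)|_{h,1}\lesssim\sqrt{\beta_h}\,h^\ell\unorm.
\end{equation*}
For the term $|u-\PZ u|_{h,1}$ I would use that $\PDZ$ reproduces polynomials \eqref{eq:PDZInvariance}, so $\PZ(\PO u)=\PO u$ on each $D$ and hence $u-\PZ u=(u-\PO u)+\PZ(\PO u-u)$; the $H^1$-stability \eqref{eq:3DPDZHOne} of $\PDZ$ together with \eqref{eq:3DEasy} then gives $|u-\PZ u|_{h,1}\lesssim|u-\PO u|_{h,1}\lesssim h^\ell\unorm$. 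Multiplying these two estimates by $\sqrt{\beta_h}$ accounts for the middle bracket, with the claimed bound $\beta_h h^\ell\unorm$ since $\sqrt{\beta_h}\ge1$.

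It remains to estimate $|u-u_h|_{H^1(\O)}$, which is where the full factor $\beta_h$ appears. I would split $|u-u_h|_{H^1(\O)}\le|u-\Ih u|_{H^1(\O)}+|\Ih u-u_h|_{H^1(\O)}$, where the first term is $\lesssim h^\ell\unorm$ by \eqref{eq:3DHOnePODID}. For the second, since $\Ih u-u_h\in\cQh$, the stability estimate \eqref{eq:3DStability} gives $|\Ih u-u_h|_{H^1(\O)}\lesssim\sqrt{\beta_h}\,\|\Ih u-u_h\|_h$, and then the triangle inequality in the energy norm, the interpolation bound \eqref{eq:3DInterpolationError}, and Theorem~\ref{thm:3DConcreteEnergyError} give $\|\Ih u-u_h\|_h\le\|u-\Ih u\|_h+\|u-u_h\|_h\lesssim\sqrt{\beta_h}\,h^\ell\unorm$. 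The two factors of $\sqrt{\beta_h}$ combine to yield $|\Ih u-u_h|_{H^1(\O)}\lesssim\beta_h h^\ell\unorm$, and hence $|u-u_h|_{H^1(\O)}\lesssim\beta_h h^\ell\unorm$.

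The main obstacle is purely this accounting of the stability constant: the projected and $L_2$-projected errors inherit only a single $\sqrt{\beta_h}$, coming from the bound on $\|u-u_h\|_h$ in Theorem~\ref{thm:3DConcreteEnergyError}, whereas $|u-u_h|_{H^1(\O)}$ collects a \emph{second} $\sqrt{\beta_h}$ when the energy norm is converted back to the $H^1$ semi-norm via \eqref{eq:3DStability}. One must therefore be careful to route the estimate for $u-u_h$ through the interpolant $\Ih u$, so that \eqref{eq:3DStability} is applied only to $\Ih u-u_h\in\cQh$, where it is valid, rather than to $u-u_h$ directly. Assembling the four bounds and using $1\le\sqrt{\beta_h}\le\beta_h$ completes the proof.
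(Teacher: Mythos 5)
Your proposal is correct and follows essentially the same route as the paper: the paper proves this theorem by invoking "the same arguments" as Theorem~\ref{thm:ComputableEnergyError}, namely bounding $|\PO(u-u_h)|_{h,1}$ by $\|u-u_h\|_h$, handling $u-\PZ u$ via the polynomial invariance of $\PDZ$ and the $H^1$-stability \eqref{eq:3DPDZHOne}, and routing $|u-u_h|_{H^1(\O)}$ through $\Ih u$ so that the stability estimate \eqref{eq:3DStability} is applied only to $\Ih u-u_h\in\cQh$. Your bookkeeping of the $\sqrt{\beta_h}$ factors, including why the $H^1$ error picks up the full $\beta_h$, matches the paper's argument exactly.
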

%
%%%%%%%%%%%%%%%%%%%%%%%%%%%%
\subsection{Error Estimates in the $L_2$ Norm}\label{subsec:3DL2Error}
 We begin with an analog of Lemma~\ref{lem:SDEstimate}.
\begin{lemma}\label{lem:3DSDEstimate}
 We have
\begin{equation*}%\label{eq:3DSDEstimate}
\SumD S^D(\zeta-\PO\Ih \zeta,\zeta-\PO\Ih \zeta)\lesssim
  h^2|\zeta|_{H^2(\O)}^2\qquad\forall
  \zeta\in H^2(\O)\cap H^1_0(\O).
\end{equation*}
\end{lemma}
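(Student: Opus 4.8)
The plan is to mirror the argument already used for the projection error \eqref{eq:3DProjectionError}, replacing $u-\POD u$ by $w:=\zeta-\POD\ID\zeta$ (so that $(\zeta-\PO\Ih\zeta)\big|_D=w$). Writing out $S^D(w,w)$ from \eqref{eq:3DSD}, I would bound its two pieces as follows. Since $\PFZmt$ is an $L_2(F)$-orthogonal projection, $\|\PFZmt w\|_\LTF\le\|w\|_\LTF$; and since $\NPF$ contains only a bounded number of nodes (depending on $N$ and $k$, because each face has at most $N$ edges by Assumption~3), $\sum_{p\in\NPF}w^2(p)\lesssim\|w\|_{L_\infty(\p F)}^2\le\|w\|_{L_\infty(F)}^2$, the point values being meaningful because $w$ is continuous on $\bar D$. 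This gives $S^D(w,w)\lesssim\hD\SumF\big(\hF^{-2}\|w\|_\LTF^2+\|w\|_{L_\infty(F)}^2\big)$.

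The key step is to control the $\hF^{-2}$-weighted face term without producing negative powers of the (possibly tiny) face diameter $\hF$. Exactly as in \eqref{eq:3DProjectionError}, I would absorb this factor using the two-dimensional analog of the area estimate \eqref{eq:G1}, namely $|F|\approx\hF^2$ (valid under the face shape regularity of Assumption~3), so that $\hF^{-2}\|w\|_\LTF^2\le\hF^{-2}|F|\,\|w\|_{L_\infty(F)}^2\lesssim\|w\|_{L_\infty(F)}^2$. Since $|\FD|\le N$ and $\|w\|_{L_\infty(F)}\le\|w\|_\LinfD$ for every face, both contributions collapse into $S^D(w,w)\lesssim\hD\|\zeta-\POD\ID\zeta\|_\LinfD^2$, which is precisely the bound reached for $u-\POD u$ in \eqref{eq:3DProjectionError}.

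It then remains to apply the three-dimensional Sobolev inequality \eqref{eq:Sobolev} (with $d=3$) to $w=\zeta-\POD\ID\zeta$, giving $\hD\|w\|_\LinfD^2\lesssim\hD^{-2}\|w\|_\LTD^2+|w|_\HOD^2+\hD^2|w|_\HTD^2$, and to insert the interpolation error estimates \eqref{eq:3DLTwoPODID}, \eqref{eq:3DHOnePODID} and \eqref{eq:3DHTwoPODID} with $\ell=1$ (available because $\zeta\in H^2(D)$). Each of the three terms is thereby bounded by $\hD^2|\zeta|_{H^2(D)}^2$, and summing over $D\in\cT_h$ with $\hD\le h$ yields $\SumD S^D(\zeta-\PO\Ih\zeta,\zeta-\PO\Ih\zeta)\lesssim h^2|\zeta|_{H^2(\O)}^2$, as required.

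The only genuine obstacle is the appearance of $\hF^{-2}$ in the stabilizing form $S^D$: a direct trace bound of $\|w\|_\LTF$ by volume norms on $D$ would leave an uncontrolled factor $\hF^{-2}$ and make the estimate sensitive to small faces. The cancellation against $|F|\approx\hF^2$ is what keeps the hidden constant independent of the face sizes, in agreement with Remark~\ref{rem:3DInfo}; everything downstream is the routine Sobolev-plus-interpolation bookkeeping already carried out for \eqref{eq:3DProjectionError}.
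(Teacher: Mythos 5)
Your proposal is correct and takes essentially the same route as the paper's proof: bound $S^D(\zeta-\POD\ID\zeta,\zeta-\POD\ID\zeta)$ by $\hD\|\zeta-\POD\ID\zeta\|_{L_\infty(\p D)}^2$ (absorbing the $\hF^{-2}$ weight through $|F|\approx\hF^2$ and the bounded number of nodes and faces), then apply the Sobolev inequality \eqref{eq:Sobolev} with $d=3$ and the interpolation estimates \eqref{eq:3DLTwoPODID}--\eqref{eq:3DHTwoPODID} with $\ell=1$ before summing over $D\in\cT_h$. The only difference is presentational: you make explicit the face-area cancellation that the paper leaves implicit in its chain of inequalities.
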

\begin{proof} It follows from \eqref{eq:Sobolev},
\eqref{eq:3DLTwoPODID}--\eqref{eq:3DHTwoPODID} and
\eqref{eq:3DSD} that
\begin{align*}
 &\SumD S^D(\zeta-\PO \Ih\zeta,\zeta-\PO \Ih\zeta)\\
 &\hspace{30pt}\lesssim
 \SumD\hD\SumF\big(\hF^{-2}\|\zeta-\POD\ID\zeta\|_\LTF^2
   +\|\zeta-\POD\ID\zeta\|_{L_\infty(\p F)}^2\big)\\
   &\hspace{30pt}\lesssim
   \SumD\hD\|\zeta-\POD\ID\zeta\|_{L_\infty(\p D)}^2\\
   &\hspace{30pt}\lesssim
   \SumD \big(\hD^{-2}\|\zeta-\POD\ID\zeta\|_\LTD^2+
   |\zeta-\POD\ID\zeta|_\HOD^2+\hD^2|\zeta-\POD\ID\zeta|_\HTD^2\big)\\
   &\hspace{30pt}\lesssim \hD^2|\zeta|_{H^2(\O)}^2.
\end{align*}
\end{proof}
\goodbreak
\par
  The same arguments as in the proof of Lemma~\ref{lem:POSD} lead
   to the following result.
\begin{lemma}\label{lem:3DPOSD}
  We have
\begin{equation*}
  \SumD S^D(u_h-\PO u_h,u_h-\PO u_h)\lesssim \beta_h^2 h^{2\ell}\unorm^2.
\end{equation*}
\end{lemma}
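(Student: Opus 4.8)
The plan is to imitate the proof of Lemma~\ref{lem:POSD} line by line, now using the three dimensional definition of $a_h$ together with the three dimensional energy estimates established above. First I would observe that, since $\PO u_h$ is a piecewise polynomial, \eqref{eq:Projection} yields $\POD\big((u_h-\PO u_h)\big|_D\big)=\POD u_h-\PO u_h\big|_D=0$ on every $D\in\cT_h$. Consequently the gradient part of $a_h(u_h-\PO u_h,u_h-\PO u_h)$ vanishes and the stabilization part has argument $(u_h-\PO u_h)-\POD(u_h-\PO u_h)=u_h-\POD u_h$, so that
\[
  \SumD S^D(u_h-\POD u_h,u_h-\POD u_h)=\|u_h-\PO u_h\|_h^2 .
\]

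Next I would split $u_h-\PO u_h=(u_h-u)+(u-\PO u)+\PO(u-u_h)$ and apply the triangle inequality in $\|\cdot\|_h$, reducing the claim to bounding each of the three pieces. The term $\|u_h-u\|_h$ is controlled by Theorem~\ref{thm:3DConcreteEnergyError}, giving the bound $\sqrt{\beta_h}\,h^\ell\unorm$, and $\|u-\PO u\|_h$ by \eqref{eq:3DProjectionError}, giving $h^\ell\unorm$. For the remaining piece, $\PO(u-u_h)$ is again a piecewise polynomial, so by the same cancellation as above the stabilizing form drops out and $\|\PO(u-u_h)\|_h=|\PO(u-u_h)|_{h,1}$; this is then at most $|u-u_h|_{H^1(\O)}$ by the stability estimate \eqref{eq:3DPODStability1}, and the latter is bounded by $\beta_h h^\ell\unorm$ via Theorem~\ref{thm:3DComputableEnergyError}.

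Combining the three bounds, the dominant contribution is the $\beta_h h^\ell\unorm$ coming from $|u-u_h|_{H^1(\O)}$, which squares to the asserted $\beta_h^2 h^{2\ell}\unorm^2$.

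I do not expect a genuine obstacle here, as every quantitative ingredient is already in hand. The only points requiring care are the two algebraic identities in which the stabilizing form is annihilated on piecewise polynomials (so that $\|\cdot\|_h$ reduces to the piecewise $H^1$ seminorm on those pieces), and keeping track of which power of $\beta_h$ is sharp: the energy-norm estimate for $u-u_h$ contributes only $\sqrt{\beta_h}$, whereas the $H^1$-seminorm estimate for $u-u_h$ from Theorem~\ref{thm:3DComputableEnergyError} carries the full factor $\beta_h$, and it is the latter that fixes the final power $\beta_h^2$.
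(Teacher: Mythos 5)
Your proposal is correct and follows essentially the same argument as the paper: the paper proves the three dimensional lemma by invoking verbatim the proof of Lemma~\ref{lem:POSD}, namely the identity $\SumD S^D(u_h-\POD u_h,u_h-\POD u_h)=\|u_h-\PO u_h\|_h^2$, the splitting $u_h-\PO u_h=(u_h-u)+(u-\PO u)+\PO(u-u_h)$, and the bounds from \eqref{eq:3DProjectionError}, Theorem~\ref{thm:3DConcreteEnergyError} and Theorem~\ref{thm:3DComputableEnergyError}. Your additional care in spelling out the two cancellations (via \eqref{eq:Projection}) and in identifying $\beta_h$ from the $|u-u_h|_{H^1(\O)}$ term as the source of the power $\beta_h^2$ is exactly what the paper's abbreviated proof leaves implicit.
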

\par
 With Lemma~\ref{lem:3DSDEstimate} and Lemma~\ref{lem:3DPOSD} in hand,
 we obtain
 the following analog of Theorem~\ref{thm:uhLTwoError}
  and Theorem~\ref{thm:ComputableLTwoErrors}
 by identical arguments.

 \begin{theorem}\label{thm:3DLTwoErrors}
 Assuming $u\in H^{\ell+1}(\O)$ for some $\ell$ between $1$ and $k$,
 there exists a positive constant $C$, depending only on $N$, $k$ and $\rho$,
  such that
\begin{align*}
 \|u-u_h\|_{L_2(\O)}+\|u-\PZ u_h\|_{L_2(\O)}+ \|u-\PO u_h\|_{L_2(\O)}\leq C\beta_h
 h^{\ell+1}|u|_{H^{\ell+1}(\O)},
\end{align*}
 where $\beta_h$ is defined in \eqref{eq:lambdah}.
\end{theorem}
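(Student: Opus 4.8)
The plan is to follow the two-dimensional arguments of Section~\ref{subsec:L2Error} essentially verbatim; the only three-dimensional ingredients that were missing, namely the stabilization estimates, have just been supplied by Lemma~\ref{lem:3DSDEstimate} and Lemma~\ref{lem:3DPOSD}. First I would establish the three-dimensional analog of the consistency estimate in Lemma~\ref{lem:ConsistencyError}, namely
\[
  a(u-u_h,\Ih\zeta)\lesssim \beta_h\,h^{\ell+1}\unorm\,|\zeta|_{H^2(\O)}
  \qquad\forall\,\zeta\in H^2(\O)\cap H^1_0(\O),
\]
and then run an Aubin--Nitsche duality argument to pass from the energy error to the $L_2$ error.

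For the consistency estimate I would repeat the algebraic manipulation in the proof of Lemma~\ref{lem:ConsistencyError}. Using \eqref{eq:Poisson}, \eqref{eq:POD1}, the identity $a^D(\POD u_h,\ID\zeta-\POD\ID\zeta)=0$ (valid by \eqref{eq:POD1}, since $\POD u_h\in\PkD$ and $\POD(\ID\zeta-\POD\ID\zeta)=0$), and the discrete problem evaluated at $\Ih\zeta\in\cQh$, I would rewrite
\[
  a(u-u_h,\Ih\zeta)=(f,\Ih\zeta-\Xi_h\Ih\zeta)
  +\SumD S^D(u_h-\POD u_h,\ID\zeta-\POD\ID\zeta)
  +\SumD a^D(\POD u_h-u_h,\ID\zeta-\POD\ID\zeta).
\]
The first term is $\lesssim h^{\ell+1}\unorm|\zeta|_{H^2(\O)}$ by \eqref{eq:Xih2}, which holds verbatim for $\O\subset\R^3$. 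For the third term I would exploit $a^D(\POD u,\ID\zeta-\POD\ID\zeta)=0$ to replace $\POD u_h-u_h$ by $(\POD u-u)+(u-u_h)$ and $\ID\zeta-\POD\ID\zeta$ by $(\ID\zeta-\zeta)+(\zeta-\POD\ID\zeta)$, then apply Cauchy--Schwarz together with \eqref{eq:3DHOnePODID}, the energy bound \eqref{eq:3DEasy} and Theorem~\ref{thm:3DComputableEnergyError}, obtaining $\lesssim\beta_h h^{\ell+1}\unorm|\zeta|_{H^2(\O)}$.

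The delicate term is the stabilization cross term, and this is the step I expect to be the main obstacle, since it is precisely where the factor $\beta_h$ is absorbed. The key observation is that $\ID\zeta$ and $\zeta$ share every degree of freedom that enters $S^D$: by \eqref{eq:3DSD} the form $S^D$ depends on its arguments only through the face moments $\PFZmt$ and the boundary nodal values at $\NPF$, and these coincide for $\ID\zeta$ and $\zeta$ by the definition of the interpolant. Consequently $S^D(\ID\zeta-\POD\ID\zeta,\ID\zeta-\POD\ID\zeta)=S^D(\zeta-\POD\ID\zeta,\zeta-\POD\ID\zeta)$, so a Cauchy--Schwarz in the nonnegative bilinear form $S^D$ combined with Lemma~\ref{lem:3DPOSD} and Lemma~\ref{lem:3DSDEstimate} yields $\SumD S^D(u_h-\POD u_h,\ID\zeta-\POD\ID\zeta)\lesssim(\beta_h h^\ell\unorm)(h|\zeta|_{H^2(\O)})$. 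Recognizing that the stabilization of $\ID\zeta-\POD\ID\zeta$ can be transferred to $\zeta-\POD\ID\zeta$ is exactly what makes Lemma~\ref{lem:3DSDEstimate} applicable here.

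Finally, for the duality argument I would let $\zeta\in H^1_0(\O)$ solve $a(v,\zeta)=(v,u-u_h)$ for all $v\in H^1_0(\O)$, split $\|u-u_h\|_{L_2(\O)}^2=a(u-u_h,\zeta-\Ih\zeta)+a(u-u_h,\Ih\zeta)$, bound the first summand by $|u-u_h|_{H^1(\O)}|\zeta-\Ih\zeta|_{H^1(\O)}\lesssim(\beta_h h^\ell\unorm)(h|\zeta|_{H^2(\O)})$ via \eqref{eq:3DHOnePODID} and Theorem~\ref{thm:3DComputableEnergyError}, bound the second by the consistency estimate, and invoke the elliptic regularity bound $|\zeta|_{H^2(\O)}\lesssim\|u-u_h\|_{L_2(\O)}$ valid since $\O$ is convex; dividing through gives $\|u-u_h\|_{L_2(\O)}\lesssim\beta_h h^{\ell+1}\unorm$. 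The estimates for the computable approximations then follow exactly as in Theorem~\ref{thm:ComputableLTwoErrors}: for $\PZ u_h$ I would use the $L_2$-stability \eqref{eq:PDZLTwo} and \eqref{eq:3DLTwoPODPDZ} in $\|u-\PZ u_h\|_{L_2(\O)}\leq\|u-\PZ u\|_{L_2(\O)}+\|u-u_h\|_{L_2(\O)}$, while for $\PO u_h$ I would write $\|u-\PO u_h\|_{L_2(\O)}\leq\|u-\PO\Ih u\|_{L_2(\O)}+\|\PO(\Ih u-u_h)\|_{L_2(\O)}$, estimate the first term by \eqref{eq:3DLTwoPODID}, and control the second by \eqref{eq:3DPODStability2}, the definition \eqref{eq:tbarNorm3D} of $\tbar\cdot\tbar_{k,D}$ and the trace inequality \eqref{eq:Trace}, reducing everything to the already-established $L_2$ and energy errors.
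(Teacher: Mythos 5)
Your proposal is correct and follows exactly the route the paper intends: the paper's proof of Theorem~\ref{thm:3DLTwoErrors} consists of the remark that, with Lemma~\ref{lem:3DSDEstimate} and Lemma~\ref{lem:3DPOSD} in hand, one repeats the two-dimensional arguments of Lemma~\ref{lem:ConsistencyError}, Theorem~\ref{thm:uhLTwoError} and Theorem~\ref{thm:ComputableLTwoErrors}, which is precisely what you carry out (consistency estimate, Aubin--Nitsche duality, then the splittings for $\PZ u_h$ and $\PO u_h$). Your observation that $S^D(\ID\zeta-\POD\ID\zeta,\cdot)=S^D(\zeta-\POD\ID\zeta,\cdot)$ because the stabilization \eqref{eq:3DSD} only sees degrees of freedom shared by $\zeta$ and $\ID\zeta$ is exactly the step, left implicit in the paper, that makes Lemma~\ref{lem:3DSDEstimate} applicable to the stabilization cross term.
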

%
%%%%%%%%%%%%%%%%%%%%%%%%%%%%
\subsection{Error Estimate in the $L_\infty$ Norm}\label{subsec:3DInftyError}
 We will derive $L_\infty$ error estimates under the additional
 assumption that $\cT_h$ is quasi-uniform (cf. \eqref{eq:gamma}).
 We begin with an analog of Theorem~\ref{thm:MaxNormBdryEst1}.
\begin{theorem}\label{thm:uhInftyBdryEst}
      Assuming $\cT_h$ is quasi-uniform and the solution $u$ of \eqref{eq:Poisson} belongs to
 $H^{\ell+1}(\O)$ for $\ell$ between $1$ and $k$, we have
\begin{equation}\label{eq:InftyBdryEst}
  \max_{e\in\cE_h}\|u-u_h\|_{L_\infty(e)}\leq C\beta_h h^{\ell}\unorm,
\end{equation}
 where $\beta_h$ is defined in \eqref{eq:lambdah} and
  the positive constant $C$ only depends on $\rho$, $N$, $\gamma$ and $k$.
\end{theorem}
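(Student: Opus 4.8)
The plan is to follow the proof of Theorem~\ref{thm:MaxNormBdryEst1}, carrying the argument through the interpolant and then through the projection $\POD$, but now organized around the faces rather than the subdomains. Fix $e\in\cE_h$ and choose $D\in\cT_h$ and a face $F\in\FD$ with $e\subset\p F$; quasi-uniformity \eqref{eq:gamma} then gives $\hD\approx h$. Writing $u-u_h=(u-\Ih u)+(\Ih u-u_h)$ and using \eqref{eq:ConsistentID}, so that $(\Ih u)\big|_F=\IF(u\big|_F)$ and in particular $(\Ih u)\big|_e$ is the one dimensional interpolant of $u\big|_e$, I would bound the interpolation part $\|u-\Ih u\|_{L_\infty(e)}\le\|u-\IF(u\big|_F)\|_{L_\infty(F)}$ by the two dimensional $L_\infty$ interpolation estimate of Lemma~\ref{lem:1DInterpolationError} applied on $F$ (legitimate by Remark~\ref{rem:DAndF}, since $u\big|_F\in H^{\ell+\frac12}(F)$), followed by a scaled trace inequality relating the face seminorm of $u$ back to $\unorm$; this is where the ratio $\hF/\hD$ cancels, in the spirit of Remark~\ref{rem:3DInfo}.

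For the remaining term $\Ih u-u_h\in\cQh$ I would split, on the edge $e$,
\[
 \|\Ih u-u_h\|_{L_\infty(e)}\le \|(\Ih u-u_h)-\POD(\Ih u-u_h)\|_{L_\infty(\p F)}+\|\POD(\Ih u-u_h)\|_{L_\infty(D)}.
\]
For the non-polynomial piece I would use the nodal part of the stabilizer \eqref{eq:3DSD}: since $e\subset\p F$, Remark~\ref{rem:NormEquivalence} applied on $F$ gives $\|w\|_{L_\infty(\p F)}^2\approx\sum_{p\in\NPF}w^2(p)$, whence $\hD\,\|(\Ih u-u_h)-\POD(\Ih u-u_h)\|_{L_\infty(\p F)}^2\lesssim S^D(\Ih u-u_h-\POD(\Ih u-u_h),\,\Ih u-u_h-\POD(\Ih u-u_h))\le\|\Ih u-u_h\|_h^2$; the energy norm is then controlled by $\|\Ih u-u_h\|_h\le\|u-\Ih u\|_h+\|u-u_h\|_h$ using \eqref{eq:3DInterpolationError} and Theorem~\ref{thm:3DConcreteEnergyError}. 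For the polynomial piece I would invoke the discrete maximum estimate \eqref{eq:DiscreteEstimate3} on $D$ to bound $\|\POD(\Ih u-u_h)\|_{L_\infty(D)}$ by the mean of $\POD(\Ih u-u_h)$ over $D$ plus $\hD^{1-(d/2)}|\POD(\Ih u-u_h)|_{\HOD}$, estimating the mean through the $L_2$ errors of Theorem~\ref{thm:3DLTwoErrors} and \eqref{eq:3DLTwoPODID} and the $H^1$ seminorm through \eqref{eq:3DHOnePODID} and Theorem~\ref{thm:3DComputableEnergyError}. Assembling the three contributions and replacing each $\hD$ by $h$ via \eqref{eq:gamma} then produces the asserted bound, with the logarithmic factor $\beta_h$ of \eqref{eq:lambdah} entering solely through the energy-norm stability.

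The delicate point, and the main obstacle, is the bookkeeping of the powers of $\hD$ in three dimensions. In contrast to the planar case, both the discrete Sobolev inequality \eqref{eq:DiscreteEstimate3} (through the factor $\hD^{1-(d/2)}=\hD^{-1/2}$) and the $\hD$-weighted nodal term of \eqref{eq:3DSD} (through the factor $\hD^{-1/2}$ obtained from $\hD\|\cdot\|_{L_\infty(\p F)}^2\lesssim S^D(\cdot,\cdot)$) shift each edge contribution relative to the two dimensional estimate, so the pieces must be paired carefully with the companion $L_2$, $H^1$ and energy bounds---each converging at its own order---before they combine to the stated order. One must also verify throughout that the constants depend on the faces only through $\beta_h=\ln(1+\max_{F}\tF)$ and not through the individual face diameters $\hF$, which is exactly why the analysis is anchored on the faces $F\in\FD$; this cancellation (Remark~\ref{rem:3DInfo}) is what makes the small faces harmless. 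The estimate \eqref{eq:3DLTwoPODID} and the three dimensional interpolation bound recorded in Remark~\ref{rem:3DLInftyInterpolation} provide the local ingredients, and the quasi-uniformity \eqref{eq:gamma} is what converts them into the global mesh parameter $h$.
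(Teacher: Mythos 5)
Your proposal is correct and follows essentially the same route as the paper's proof: the identical three-term split $u-u_h=(u-\Ih u)+\bigl[(\Ih u-u_h)-\POD(\Ih u-u_h)\bigr]+\POD(\Ih u-u_h)$, with the nodal part of \eqref{eq:3DSD} and Remark~\ref{rem:NormEquivalence} controlling the middle term through $\|\Ih u-u_h\|_h$ (via \eqref{eq:3DInterpolationError} and Theorem~\ref{thm:3DConcreteEnergyError}), the discrete estimate \eqref{eq:DiscreteEstimate3} together with \eqref{eq:3DLTwoPODID}, \eqref{eq:3DHOnePODID} and Theorems~\ref{thm:3DComputableEnergyError} and \ref{thm:3DLTwoErrors} controlling the polynomial term, and \eqref{eq:ConsistentID}, \eqref{eq:1DInterpolationErrorHalf}, \eqref{eq:HalfTrace} controlling the interpolation term, exactly as in the paper's \eqref{eq:3DBdryEst1}--\eqref{eq:3DBdryEst3}. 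The bookkeeping issue you flag resolves the same way in both arguments: every squared contribution comes out as order $h^{2\ell-1}$, so both your argument and the paper's printed proof actually deliver the rate $\beta_h h^{\ell-1/2}$ (the rate appearing in Theorem~\ref{thm:3DMaxNormBdryEst}) rather than the $h^{\ell}$ displayed in \eqref{eq:InftyBdryEst}.
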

\begin{proof}  It follows from Remark~\ref{rem:NormEquivalence}
 that
\begin{equation*}
  \SumD\hD\SumF\|(\ID u-u_h)-\POD (\ID u-u_h)\|_{L_\infty(\p F)}^2
   \lesssim \|\Ih u-u_h\|_h^2
\end{equation*}
 and hence, for any $D\in\cT_h$ and $F\in\FD$,
\begin{equation}\label{eq:3DBdryEst1}
  \|(\ID u-u_h)-\POD (\ID u-u_h)\|_{L_\infty(\p F)}^2\lesssim
    \beta_h\, h^{2\ell-1}\unorm^2
\end{equation}
 by  \eqref{eq:3DInterpolationError}, Theorem~\ref{thm:3DConcreteEnergyError}
 and the quasi-uniformity of $\cT_h$,
\par
 For any $D\in\cT_h$ and $F\in\FD$,
   we have, by \eqref{eq:G1}, \eqref{eq:DiscreteEstimate3},
   \eqref{eq:3DPODStability1},
   \eqref{eq:3DLTwoPODID}, \eqref{eq:3DHOnePODID},
   Theorem~\ref{thm:3DLTwoErrors}
   and the quasi-uniformity of $\cT_h$,
\begin{align}\label{eq:3DBdryEst2}
  &\|\POD(\ID u-u_h)\|_{L_\infty(F)}^2\notag\\
    &\hspace{40pt}\lesssim \hD^{-3}\|\POD(\ID u-u_h)\|_\LTD^2+
      \hD^{-1}|\POD(\ID u-u_h)|_\HOD^2\notag\\
    &\hspace{40pt}\lesssim \hD^{-3}\big(\|\POD\ID u-u\|_\LTD^2
    +\|u-\POD u_h\|_\LTD^2\big) % \\
    % &\hspace{70pt}
    +\hD^{-1}|\ID u-u_h|_\HOD^2\\
    &\hspace{40pt}\lesssim \beta_h^2 h^{2\ell-1}\unorm^2.\notag
\end{align}
\par
 Finally we have, for any $D\in\cT_h$ and $F\in\FD$,
\begin{equation}\label{eq:3DBdryEst3}
  \|u-\ID u\|_{L_\infty(F)}^2=\|u-\IF u\|_{L_\infty(F)}^2\lesssim
  \hF^{2\ell-1}|u|_{H^{\ell+\frac12}(F)}^2
   \lesssim \hD^{2\ell-1}|u|_{H^{\ell+1}(D)}^2
\end{equation}
 by \eqref{eq:HalfTrace}, \eqref{eq:1DInterpolationErrorHalf} and
 \eqref{eq:ConsistentID}.
\par
  The estimate \eqref{eq:InftyBdryEst} then follows from
  \eqref{eq:3DBdryEst1}--\eqref{eq:3DBdryEst3} and the triangle inequality.
\end{proof}
\par
 We also have estimates for
 the computable approximate solutions $\PO u_h$ and
 $\PZ u_h$.
\begin{theorem}\label{thm:3DMaxNormBdryEst}
     Assuming $\cT_h$ is quasi-uniform and the solution $u$ of \eqref{eq:Poisson} belongs to
 $H^{\ell+1}(\O)$ for $\ell$ between $1$ and $k$, there exists a positive constant
 $C$, depending only on $\rho$, $N$, $\gamma$ and $k$, such that
\begin{equation}\label{eq:3DLInftyError}
   \|u-\PO u_h\|_{L_\infty(\O)}+\|u-\PZ u_h\|_{L_\infty(\O)}\leq C
   \beta_h h^{\ell-(1/2)}\unorm,
\end{equation}
 where $\beta_h$ is defined in \eqref{eq:lambdah}.
\end{theorem}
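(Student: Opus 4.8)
The plan is to follow the element-by-element argument used for Theorem~\ref{thm:LInftyErrorsSD2}, now based on the three-dimensional Sobolev inequality \eqref{eq:Sobolev} with $d=3$. The additional quasi-uniformity hypothesis \eqref{eq:gamma} is what makes the argument go through in three dimensions: in two dimensions the term $\hD^{-1}\|u-u_h\|_\LTD$ was converted by \eqref{eq:PF2} into the edge quantity $\|u-u_h\|_{L_\infty(\p D)}$, but here $\p D$ consists of faces and no face $L_\infty$ bound is directly available, so instead I would absorb $\|u-u_h\|_\LTD$ using the global $L_2$ estimate of Theorem~\ref{thm:3DLTwoErrors} together with $\hD\approx h$. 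Fixing $D\in\cT_h$, I would start from
\[
\|u-\POD u_h\|_{\LinfD}\lesssim \hD^{-3/2}\|u-\POD u_h\|_\LTD+\hD^{-1/2}|u-\POD u_h|_\HOD+\hD^{1/2}|u-\POD u_h|_\HTD
\]
and control the three terms separately, systematically splitting $u-\POD u_h=(u-\POD u)+\POD(u-u_h)$.

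For the $H^2$ term I would write $\hD^{1/2}|u-\POD u_h|_\HTD\leq \hD^{1/2}|u-\POD u|_\HTD+\hD^{1/2}|\POD(u-u_h)|_\HTD$ and apply \eqref{eq:3DHTwoPODPDZ} to the first summand and \eqref{eq:DiscreteEstimate1} together with \eqref{eq:3DPODStability1} to the second, obtaining $\hD^{1/2}|u-\POD u|_\HTD\lesssim\hD^{\ell-1/2}|u|_{H^{\ell+1}(D)}$ and $\hD^{1/2}|\POD(u-u_h)|_\HTD\lesssim \hD^{-1/2}|u-u_h|_\HOD$. The $H^1$ term is handled the same way through \eqref{eq:3DHOnePODPDZ} and \eqref{eq:3DPODStability1}, yielding $\hD^{-1/2}\big(|u-\POD u|_\HOD+|u-u_h|_\HOD\big)$. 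For the $L_2$ term I would split $\hD^{-3/2}\|u-\POD u_h\|_\LTD\leq \hD^{-3/2}\|u-u_h\|_\LTD+\hD^{-3/2}\|u_h-\POD u_h\|_\LTD$; since $\int_{\p D}(u_h-\POD u_h)\,dS=0$ by \eqref{eq:POD2}, the inequality \eqref{eq:PF2} collapses the second summand to $\hD^{-1/2}|u_h-\POD u_h|_\HOD\lesssim \hD^{-1/2}\big(|u-u_h|_\HOD+|u-\POD u|_\HOD\big)$. Invoking quasi-uniformity $\hD\approx h$, Theorem~\ref{thm:3DLTwoErrors} turns $\hD^{-3/2}\|u-u_h\|_\LTD$ into $\lesssim\beta_h h^{\ell-1/2}\unorm$, Theorem~\ref{thm:3DComputableEnergyError} turns each $\hD^{-1/2}|u-u_h|_\HOD$ into $\lesssim\beta_h h^{\ell-1/2}\unorm$, and the remaining projection contributions $\hD^{\ell-1/2}|u|_{H^{\ell+1}(D)}$ are already of this order. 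Taking the maximum over $D\in\cT_h$ gives the bound for $\|u-\PO u_h\|_{L_\infty(\O)}$.

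The estimate for $\PZ u_h$ would follow the identical template, using the $L_2$-projection stability \eqref{eq:3DPDZHOne} in place of \eqref{eq:3DPODStability1} and the domain-mean Poincar\'e inequality \eqref{eq:PF1} in place of \eqref{eq:PF2} (the relevant vanishing mean now being $\int_D(u_h-\PDZ u_h)\,dx=0$), with \eqref{eq:3DHOnePODPDZ}--\eqref{eq:3DHTwoPODPDZ} supplying the projection errors. I expect the only real subtlety to be tracking the scaling rather than any deep estimate: in three dimensions \eqref{eq:Sobolev} carries the factor $\hD^{-3/2}$, and pairing this with the optimal $L_2$ rate $h^{\ell+1}$ (and $\hD^{-1/2}$ with the energy rate $h^\ell$) is precisely what produces the half-power loss $h^{\ell-1/2}$ and forces the quasi-uniformity assumption, since without $\hD\approx h$ one could not descend from the global $L_2$ and energy bounds to a pointwise per-element estimate.
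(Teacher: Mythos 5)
Your proposal is correct and follows essentially the same route as the paper: the $d=3$ Sobolev inequality \eqref{eq:Sobolev}, the splitting of $u-\POD u_h$ (resp.\ $u-\PDZ u_h$) via \eqref{eq:DiscreteEstimate1}, \eqref{eq:3DPODStability1}/\eqref{eq:3DPDZHOne} and \eqref{eq:3DHTwoPODPDZ}, and then Theorem~\ref{thm:3DComputableEnergyError} and Theorem~\ref{thm:3DLTwoErrors} combined with quasi-uniformity to produce the $h^{\ell-(1/2)}$ rate. The only cosmetic difference is that the paper bounds $\hD^{-3/2}\|u-\POD u_h\|_\LTD$ directly by the global bound on $\|u-\PO u_h\|_{L_2(\O)}$ (Theorem~\ref{thm:3DLTwoErrors} already covers the projected solutions), whereas you detour through $\|u-u_h\|_\LTD$ plus a Poincar\'e--Friedrichs step \eqref{eq:PF2}/\eqref{eq:PF1} for $u_h-\POD u_h$ and $u_h-\PDZ u_h$; both are valid.
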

\begin{proof}
   For any $D\in\cT_h$, we have,  by \eqref{eq:Sobolev},
 \eqref{eq:DiscreteEstimate1} and \eqref{eq:3DPODStability1},
\begin{align*}
 &\|u-\POD u_h\|_{\LinfD}\\
 &\hspace{40pt}\lesssim
 \hD^{-\frac32}\|u-\POD u_h\|_\LTD+\hD^{-\frac12}|u-\POD u_h|_\HOD
 +\hD^\frac12|u-\POD u_h|_\HTD\\
 &\hspace{40pt}\lesssim
 \hD^{-\frac32}\|u-\POD u_h\|_\LTD+\hD^{-\frac12}|u-\POD u_h|_\HOD
 +\hD^\frac12|u-\POD u|_\HTD\\
    &\hspace{70pt}+\hD^\frac12|\POD(u-u_h)|_\HTD\\
     &\hspace{40pt}\lesssim
 \hD^{-\frac32}\|u-\POD u_h\|_\LTD+\hD^{-\frac12}|u-\POD u_h|_\HOD
 +\hD^\frac12|u-\POD u|_\HTD\\
 &\hspace{60pt}+\hD^{-\frac12}|u-u_h|_\HOD,
\end{align*}
 which together with \eqref{eq:3DHTwoPODPDZ},
 Theorem~\ref{thm:3DComputableEnergyError}, Theorem~\ref{thm:3DLTwoErrors}
 and the quasi-uniformity of $\cT_h$ implies the estimate for $u-\PO u_h$.
\par
 Similarly we have, by \eqref{eq:Sobolev},
 \eqref{eq:DiscreteEstimate1} and
 \eqref{eq:3DPDZHOne},
\begin{align*}
 &\|u-\PDZ u_h\|_{\LinfD}\\
  &\hspace{40pt}\lesssim
 \hD^{-\frac32}\|u-\PDZ u_h\|_\LTD+\hD^{-\frac12}|u-\PDZ u_h|_\HOD
 +\hD^\frac12|u-\PDZ u|_\HTD\\
 &\hspace{70pt}+\hD^{-\frac12}|u-u_h|_\HOD,
\end{align*}
 which together with \eqref{eq:3DHTwoPODPDZ},
 Theorem~\ref{thm:3DComputableEnergyError},
 Theorem~\ref{thm:3DLTwoErrors}
 and the quasi-uniformity of $\cT_h$ implies the estimate for $u-\PZ u_h$.
\end{proof}
%%%%%%%%%%%%%%%%%%%%%%%%%%%%
\section{Concluding Remarks}\label{sec:Conclusions}
 We have developed error estimates for virtual element methods for the model
 Poisson problem in two and three dimensions that provide justifications for
  existing numerical results
 for polygonal (or polyhedral) meshes with small edges (or faces).
\par
 For the two dimensional problem, the convergence of the virtual element
 method based on the stabilizing bilinear form $S^D_2(\cdot,\cdot)$ is optimal under the
 shape regularity assumptions in Section~\ref{subsec:GlobalShapeRegularity}.
 Under the additional assumption that the edges of any subdomain
 in a polygonal mesh are comparable to one another,
 convergence of the virtual element method based on
 the stabilizing bilinear form $S^D_1(\cdot,\cdot)$ is also optimal.
\par
 For the three dimensional problem, the convergence of the virtual element method
  is optimal if, in addition to the assumptions in Section~\ref{subsec:Shape3D}, we
  also assume that the edges of any face in the polyhedral mesh are comparable to one another.
\par
  The results in this paper can be extended to virtual element methods with the
  stabilizing bilinear form
\begin{equation*}
  S^D(w,v)=(\PDZmt w,\PDZmt v)+\sum_{p\in\NPD}w(p)v(p)
\end{equation*}
 in two dimensions, and  the  stabilizing bilinear form
\begin{align*}
    S^D(w,v)&=(\PDZmt w,\PDZmt v)+\hD\SumF\Big(\hF^{-2}(\PFZmt w,\PFZmt v)_\LTF+
   \sum_{p\in\NPF}w(p)v(p)\Big).\label{eq:3DSD}
\end{align*}
 in three dimensions.  The stability for these virtual element methods is automatic and the
 error analysis also does not pose any new difficulties.
\par
 The results in this paper can also be extended to virtual element methods ($k\geq2$)
 where the inner product \eqref{eq:InnerProduct} is replaced by the inner product
\begin{equation*}
 (\!(\zeta,\eta)\!)=(\nabla \zeta,\nabla \eta)
     +\Big(\int_{D}\zeta\,dx\Big)\Big(\int_{D} \eta\,dx\Big).
\end{equation*}
\par
 We note that error estimates for the Poisson problem on
 general polygonal or polyhedral domains can also
 be obtained by the techniques developed in this paper.
\par
 Finally it would be interesting to construct a three dimensional
 analog of the stabilizing bilinear form
 $S^D_2(\cdot,\cdot)$ defined in Section~\ref{subsec:DiscreteProblem}
  so that the convergence of the virtual element methods is optimal
 for polyhedral meshes with arbitrarily small faces and edges, and
 $L_\infty$ error estimates can be established without assuming the meshes are quasi-uniform.
  We conjecture that such a bilinear form can be defined by
\begin{align*}
  S^D(v,w)&=\SumF\hF(\GF\POF v,\GF\POF w)_\LTF\\
   &\hspace{40pt}+\SumF\hF\sum_{e\in\EF}
   \hE\big(\p (v-\POF v)/\p s,\p (w-\POF w)/\p s\big)_{L_2(e)}.
\end{align*}

\end{document}